\newif\ifshowtikz
\let\oldtikzpicture\tikzpicture
\let\oldendtikzpicture\endtikzpicture
\begin{document}

\newtheorem{theorem}{Theorem}[section]
\newtheorem{lemma}[theorem]{Lemma}
\newtheorem{corollary}[theorem]{Corollary}
\newtheorem{conjecture}[theorem]{Conjecture}
\newtheorem{cor}[theorem]{Corollary}
\newtheorem{proposition}[theorem]{Proposition}
\newtheorem{assumption}[theorem]{Assumption}
\theoremstyle{definition}
\newtheorem{definition}[theorem]{Definition}
\newtheorem{example}[theorem]{Example}
\newtheorem{claim}[theorem]{Claim}
\newtheorem{remark}[theorem]{Remark}

\newenvironment{pfofthm}[1]
{\par\vskip2\parsep\noindent{\sc Proof of\ #1. }}{{\hfill
$\Box$}
\par\vskip2\parsep}
\newenvironment{pfoflem}[1]
{\par\vskip2\parsep\noindent{\sc Proof of Lemma\ #1. }}{{\hfill
$\Box$}
\par\vskip2\parsep}

\newcommand{\red}{\color{red}}
\newcommand{\dist}{\mathrm{d}}
\newcommand{\p}{\mathbf{p}}
\newcommand{\q}{\mathbf{q}}
\renewcommand{\u}{\mathbf{u}}
\newcommand{\R}{\mathbb{R}}
\newcommand{\I}{\operatorname{I}}
\newcommand{\C}{\mathbb{C}}
\newcommand{\UH}{\mathbb{H}}
\newcommand{\T}{\mathbb{T}}
\newcommand{\D}{\mathbb{D}}
\newcommand{\G}{\mathcal{G}}
\newcommand{\V}{\mathrm{V}}
\newcommand{\Z}{\mathbb{Z}}
\newcommand{\Q}{\mathbb{Q}}
\newcommand{\E}{\mathbb E}
\newcommand{\N}{\mathbb N}

\newcommand{\Def}{\overset{\Delta}{=}}

\newcommand{\supp}{\operatorname{supp}}
\newcommand{\sgn}{\operatorname{sgn}}

\newcommand{\Prob}{\Pr}
\newcommand{\Var}{\operatorname{Var}}
\newcommand{\Cov}{\operatorname{Cov}}
\newcommand{\dtv}{d_{\operatorname{TV}}}
\newcommand{\Exp}{\mathbb{E}}
\newcommand{\expect}{\mathbb{E}}
\newcommand{\1}{\mathbf{1}}
\newcommand{\prob}{\Pr}
\newcommand{\weakto}{\Rightarrow}
\newcommand{\probto}{\overset{\Pr}{\longrightarrow}}
\newcommand{\pr}{\Pr}
\newcommand{\filt}{\mathscr{F}}
\DeclareDocumentCommand \one { o }
{%
  \IfNoValueTF {#1}
  {\mathbf{1}  }
  {\mathbf{1}\left\{ {#1} \right\} }%
}
\newcommand{\Bernoulli}{\operatorname{Bernoulli}}
\newcommand{\Binomial}{\operatorname{Binom}}
\newcommand{\Binom}{\Binomial}
\newcommand{\Poisson}{\operatorname{Poisson}}
\newcommand{\Exponential}{\operatorname{Exp}}

\newcommand{\Ai}{\operatorname{Ai}}
\newcommand{\tr}{\operatorname{tr}}
\renewcommand{\det}{\operatorname{det}}
\newcommand{\diag}{\operatorname{diag}}

\newcommand{\Image}{\operatorname{Image}}
\newcommand{\Span}{\operatorname{Span}}
\newcommand{\Id}{\operatorname{Id}}

\newcommand{\EF}{\mathbf{Q}_{N}}
\newcommand{\JEF}{\mathbf{L}_{N}}
\newcommand{\WEF}{\mathbf{W}_{N}}
\newcommand{\ESD}{\varrho_N}
\newcommand{\LSD}{\varrho}
\newcommand{\Dev}{\vartheta_\beta}
\newcommand{\TGF}{\mathbf{T}}
\newcommand{\ZF}{\mathbf{Z}_N}
\newcommand{\WGF}{\mathbf{W}}
\newcommand{\Scary}{\mathcal{S}}
\newcommand{\HB}{ B_{\mathbb{H}}}

\DeclareDocumentCommand \BL { G{t_0} O{\omega} } { 
  {\mathcal{E}(#2)}
  }
\DeclareDocumentCommand \ELL { G{p,t_0}  O{\theta} } { 
  {\mathcal{E}(#2)}
}
\newcommand{\MFI}{{Y}}

\DeclareDocumentCommand{\FMC}{ g }{
  \IfNoValueTF {#1}
  {
    \mathbb{F}
  }
  {
    \mathbb{F}_{ {#1} }
  }
}

\DeclareDocumentCommand \norm { O{\cdot} } { \left\|{ #1 }\right\| }
\DeclareDocumentCommand \nuclear { O{\cdot} } { \left\|{ #1 }\right\|_{\nu} }
\DeclareDocumentCommand \Hhalf { O{\cdot} } { \left\|{ #1 }\right\|_{H^{1/2}} }
\DeclareDocumentCommand \rawev { O{i} O{N} } { \lambda_{ {#1 }}^{({#2})} }

\newcommand{\GF}{\mathbf{G}}
\newcommand{\LF}{\mathbf{L}}
\newcommand{\QGF}{\mathbf{Q}}
\newcommand{\F}{\mathbf{F}}
\newcommand{\BSA}{\mathscr{B}}
\newcommand{\CUEF}{\mathbf{U}}
\newcommand{\GUEF}{\mathbf{\Lambda}}
\NewDocumentCommand {\BF} { } {\nu}
\newcommand{\WN}{\mathbf{Z}}
\newcommand{\RWN}{\mathbf{R}}
\newcommand{\dH}{d_{\mathbb{H}}}
\newcommand{\dT}{d_{T}}
\newcommand{\ddH}{\tilde{d}_{\mathbb{H}}}
\newcommand{\HM}{\mathfrak{m}}
\DeclareDocumentCommand \ROT { O{\theta} } { 
  {Q}_{ {#1} }
}
\newcommand{\Meso}{\mathscr{E}}
\newcommand{\FI}{{W^p}}

\DeclareDocumentCommand \Ev { G{\BF,p,t_0} o } { 
  \EVENT{\mathcal{A}}{\ell}[#1][#2]
}
\DeclareDocumentCommand \EvU { G{\rho,t} o } { 
  \EVENT{\mathcal{A}}{u}[#1][#2]
}
\NewDocumentCommand {\EvL}{ O{\BF} O{\theta} } { 
  \EVENT{\mathcal{A}}{\ell2}[#1][#2]
}
\DeclareDocumentCommand \EvUU { o o } { 
  \EVENT{\mathcal{A}}{u2}[#1][#2]
}
\DeclareDocumentCommand \BS { o o } { 
  \EVENT{\mathcal{B}}{1}[#1][#2]
}

\DeclareDocumentCommand \BLL { o o } { 
  \EVENT{\mathcal{B}}{3}[#1][#2]
}
\DeclareDocumentCommand \ES { G{t_0} o } { 
  \EVENT{\mathcal{E}}{1}[#1][#2]
}
\DeclareDocumentCommand \ESU { o } { 
  \EVENT{\mathcal{E}}{2}[][#1]
}
\DeclareDocumentCommand \EL { G{\rho,t} o } { 
  \EVENT{\mathcal{E}}{u}[#1][#2]
}

\DeclareDocumentCommand \ELM { o o } { 
  \EVENT{\mathcal{E}}{\HM}[#1][#2]
}
\DeclareDocumentCommand \DL { o O{k} } { 
  \EVENT{\mathcal{D}}{#2}[\rho][#1]
}
\DeclareDocumentCommand{\EVENT}{ m m o o }
{
  \IfNoValueTF {#3}
  {
    {#1}_{#2}^{{\BF}}
  }
  {
    \IfNoValueTF {#4}
    { {#1}_{#2}^{{#3}} }
    { {#1}_{#2}^{{#3}}({#4}) }
  }
}

\DeclareDocumentCommand \Bias { g o }{
  \IfNoValueTF {#1}
  {
    \IfNoValueTF {#2}
    {
      \mathfrak{B}
    }
    {
      \mathfrak{B}_{#2}
    }
  }
  {
    \IfNoValueTF {#2}
    {
      \mathfrak{B}(#1)
    }
    {
      \mathfrak{B}_{#2}(#1)
    }
  }
}
\DeclareDocumentCommand \WUSA { O{\ell} }{
  \operatorname{MEM}(#1)
}
\title[RMT field]{The law of large numbers for the maximum of almost Gaussian log-correlated fields coming from random matrices}
\author{Gaultier Lambert}
\address{Department of Mathematics, Universit\"at Z\"urich}
\email{gaultier.lambert@math.uzh.ch}
\author{Elliot Paquette}
\address{Department of Mathematics, The Ohio State University}
\email{paquette.30@osu.edu}
\thanks{ G.L.\,gratefully acknowledges the support of the grant
KAW 2010.0063 from the Knut and Alice Wallenberg Foundation while at KTH, Royal Institute of Technology.  E.P.\,gratefully acknowledges the support of NSF Postdoctoral Fellowship DMS-1304057.
  This work began while G.L.\,visited Weizmann, supported in part by a grant from the Israel Science Foundation.  E.P.\,would also like to thank Kurt Johansson for his invitation to visit KTH, during which time part of this work was completed.
  This project has also received funding from the European Research Council (ERC) under the European Union’s Horizon 2020 research and innovation programme (grant agreement No. 692452).
}
\date{\today}
\maketitle

\begin{abstract}\normalsize
We compute the leading asymptotics as $N\to\infty$ of the maximum of the field 
$Q_N(q)=  \log\det|q- A_N|$, $q\in\C$, for any unitarily invariant Hermitian random matrix $A_N$ associated to a non-critical real-analytic potential. Hence, we verify the leading order in a conjecture of \cite{FyodorovSimmGUE} formulated for the GUE. The method relies on a classical upper-bound and a more sophisticated lower-bound based on a variant of the second-moment method which exploits the hyperbolic branching structure of the field $Q_N(q)$, $q\in\UH$. Specifically, we compare $Q_N$ to an idealized Gaussian field by means of exponential moments. In principle, this method could also be applied to  random fields coming from other point processes provided that one can compute certain mixed exponential moments. For unitarily invariant ensembles, we show that these assumptions follow from the Fyodorov-Strahov formula \cite{FS} and asymptotics of orthogonal polynomials derived in \cite{DKMVZ}.
\end{abstract}

\section{Introduction}
\label{sec:general}

We consider the following general problem, applicable to the study of the maximum of the log-modulus of the determinant of a random matrix with real eigenvalues.  Suppose that $\ESD$ is the empirical measure of a random collection of $N$ real points $\left\{ \lambda_i \right\}_1^N,$ i.e.
\[
  \ESD(\lambda) = \sum_{i=1}^N \delta(\lambda - \lambda_i).
\]
We will work under the assumption that $N^{-1}\ESD$ is converging to a compactly supported, deterministic probability measure with compact density, which we denote by $\LSD.$  Without loss of generality, we will assume the support of $\LSD$ is contained in $[-1,1].$  We will not assume, however, that the limiting measure has a connected support.
In terms of these objects, we define $\EF : \UH_{\pm} \to \R$ as the  log-potential of the measure $N\LSD - \ESD,$ that is for $q \in \UH_{\pm}$,
\begin{equation*}
  \EF(q) =
  \int_\R \log|q-x|\,\ESD(dx)
  -
  N\int_\R \log|q-x|\,\LSD(dx).
\end{equation*}

As is known for many classes of random matrices $A_N$, the process $\EF(q)$ coming from the eigenvalues of  $A_N$ satisfies a central limit theorem for fixed $q \in \UH_{\pm}$ (see e.g.\,\cite{Kurt98,PasturShcherbina,BaiSilverstein}), as $N\to\infty.$ This gives rise to an explicit, centered Gaussian field $\Lambda : \UH_{\pm} \to \R.$ For example, as a consequence of \cite{Kurt98}, for a wide class of one-cut unitarily invariant random matrices, 
\begin{equation} \label{eq:Lambda}
  \EF(q) \weakto_{N\to\infty} \Lambda(q) .
\end{equation}
The one-cut assumption, meaning that the support of the equilibrium density $\LSD$ is connected, is necessary to get the limiting Gaussian behavior.  A similar statement holds for general $\beta$-ensembles as well,  but the limiting field is not centered if $\beta \neq 2$.  For multi-cut ensembles, the limiting distribution is no longer Gaussian \cite[Theorem 2]{Shcherbina} (see also \cite{BorotGuionnet}).

This field $\Lambda(q)$ is a natural example of a log-correlated Gaussian field, many of whose properties are well understood: in particular, there is work on the geometry of thick points \cite{Daviaud, HuMillerPeres} in specific cases (which should be expected to generalize naturally) and work on the law of the maximum in great generality \cite{DRZ}.  There is also work on the convergence of exponentials of non-Gaussian log-correlated fields and their convergence to Gaussian multiplicative chaoses \cite{SaksmanWebb, Webb}.  This article is philosophically concerned with determining to what extent predictions about the maximum of such a Gaussian log-correlated field also hold for $\EF.$

Along this line, \cite{FyodorovSimmGUE} (see also \cite{FH}) have made a prediction for the maximum of the log-determinant of a Gaussian Unitary Ensemble (GUE) matrix, based on a hypothetical analytic continuation of the Selberg integral. 
\begin{conjecture}
  \label{conj:FyodorovSimm}
  Suppose $A_N$ is the Gaussian Unitary Ensemble. Let
  \[
    M_N^* = \max_{x \in [-1,1]} \left\{ 
    \log|\det(x-A_N)| - \Exp(\log |\det(x - A_N)|)
    \right\}.
  \]
  Then as $N\to\infty$ 
  \[
    M_N^* - \log N + \frac34 \log\log N \weakto y,
  \]
  where $y$ has an explicit distribution (see \cite{FyodorovSimmGUE} for more details).
\end{conjecture}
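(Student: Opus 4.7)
The strategy is to lift the authors' leading-order comparison of $\EF$ with an idealized Gaussian log-correlated field $\Lambda$ into a sufficiently tight coupling to transfer the Bramson correction $-\tfrac34\log\log N$ and the limiting law. For the Gaussian surrogate $\Lambda$ on $\UH_{\pm}$, whose covariance has an approximate hyperbolic-tree structure at scales $N^{-1} \le \Im q \le 1$, the full distributional statement
\[
  \max_{x \in [-1,1]}\Lambda(x + iN^{-1}) - \log N + \tfrac34 \log\log N \weakto y_{\Lambda}
\]
should follow from the general log-correlated maximum theory of Ding--Roy--Zeitouni combined with derivative-martingale analysis in the style of Madaule and Biskup--Louidor. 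The limit $y_{\Lambda}$ is a randomly-shifted Gumbel whose shift is (the logarithm of) the total mass of a critical Gaussian multiplicative chaos on $[-1,1]$. One must first either establish this as a black-box input for $\Lambda$ or adapt the proof to its specific two-dimensional hyperbolic structure, which is closer to the $2$d GFF than to branching Brownian motion but retains the key exact hierarchical decomposition needed for the Bramson subtraction.

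The central step is to upgrade the single-point exponential-moment comparison used in the law of large numbers to an asymptotic equality of multi-point exponential moments across all dyadic scales. Concretely, for any $r\ge 1$, any $q_1,\dots,q_r \in \UH$ with $N^{-1}\le\Im q_j\le 1$, and any bounded real charges $\beta_1,\dots,\beta_r$ in the subcritical regime, one needs
\[
  \Exp \exp\Bigl(\sum_{j=1}^r \beta_j \EF(q_j)\Bigr) = (1+o(1)) \Exp \exp\Bigl(\sum_{j=1}^r \beta_j \Lambda(q_j)\Bigr)
\]
uniformly in the configuration. The Fyodorov--Strahov formula reduces the left-hand side to a multivariate Hankel/Toeplitz-type determinant, and the DKMVZ Riemann--Hilbert asymptotics should, after the usual $N\to\infty$ steepest-descent analysis, produce exactly the Gaussian right-hand side with a subleading error that is $o(1)$ uniformly in the positions $q_j$.

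With such matched multi-scale moments, the third step is a Kahane/Slepian-type interpolation coupling: one defines $\EF^t = \sqrt{1-t}\,\EF + \sqrt{t}\,\Lambda$ and controls $\frac{d}{dt}\Exp F(\max \EF^t)$ for smooth test functionals $F$ applied to the recentered maximum. The matched joint moments feed into this derivative to show $\max \EF$ and $\max \Lambda$ coincide in distribution modulo $o(1)$ after the universal $\log N - \tfrac34\log\log N$ centering. In parallel, one must show that the extremal process (the decoration of $\EF$ near its maximum) agrees with that of $\Lambda$, which amounts to identifying the Gibbs measure at the critical temperature in both fields and verifying convergence of the corresponding derivative martingale constructed directly from $\EF$ itself.

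The main obstacle is the $O(1)$-level precision. Leading order requires only multiplicative moment control up to $N^{o(1)}$ factors, whereas locating the $-\tfrac34 \log\log N$ term and identifying the law of $y$ demands precision $1+o(1)$ in the exponential-moment comparison, \emph{uniformly} over configurations of points spread across all $\log N$ dyadic scales near the spectrum. The Riemann--Hilbert analysis must therefore be pushed to sufficient order to rule out subleading contributions that could shift the centering by $O(\log\log N)$ or distort the limit. The convergence of a derivative martingale built from the non-Gaussian field $\EF$ is a further delicate point, as it controls the random Gumbel shift defining $y$ and currently has no direct counterpart in the literature on non-Gaussian log-correlated fields; this seems to be the step that obstructs a direct reduction of the full conjecture to the present authors' moment-comparison framework.
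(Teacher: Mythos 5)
The statement you are addressing is Conjecture~\ref{conj:FyodorovSimm}, which the paper does \emph{not} prove; it is quoted as an open prediction of Fyodorov and Simm, and the paper's actual result (Theorem~\ref{thm:main}) establishes only the leading order, $M_N^*/\log N \to 1$ in probability. So there is no proof in the paper to compare against, and your text is a research program rather than a proof. As a program it is reasonable and broadly consistent with how such results have been obtained for the CUE/C$\beta$E, but several of its steps are genuinely open, and you say so yourself in the final paragraph; a proposal that ends by identifying the step that ``obstructs a direct reduction'' of the statement to known techniques is not a proof of the statement.

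To be concrete about where the argument would break down if one tried to execute it with the tools of this paper. First, the multi-point exponential moment comparison you require --- $(1+o(1))$ precision, uniformly over configurations spread across all $\log N$ dyadic scales, and for general real charges $\beta_j$ --- is far beyond what the Fyodorov--Strahov formula delivers: that formula only handles integer powers of the characteristic polynomial (here, balanced ratios giving charges $\pm 2$ on $|\det|$), and the paper's Proposition~\ref{prop:ZG} obtains the $1+O(N^{-\delta})$ comparison only for points confined to a single mesoscopic window $\mathcal{D}_{N,\delta}$ around one bulk point, subject to the pairing/separation conditions \eqref{condition_1}--\eqref{condition_2}. Non-integer charges would require an entirely different input (e.g.\ Fisher--Hartwig--type asymptotics), and uniformity down to microscopic separation at all scales is precisely what the $O(1)$-level analysis needs and what is not available. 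Second, the ``Kahane/Slepian-type interpolation'' $\EF^t = \sqrt{1-t}\,\EF + \sqrt{t}\,\Lambda$ has no useful derivative formula when $\EF$ is non-Gaussian: Kahane's inequality and the Gaussian integration-by-parts it rests on require both fields to be Gaussian, so this step is not merely delicate but structurally unavailable; the known routes to tightness and convergence in law for non-Gaussian log-correlated fields (as in the C$\beta$E work you implicitly invoke) proceed by entirely different means. Third, the convergence of a derivative martingale built from $\EF$ itself, which controls the random shift in the limit law $y$, is an open problem that your proposal assumes rather than proves. In short: the statement remains a conjecture, and the proposal, while a sensible roadmap, contains at least three steps that are themselves unproven and at least one (the interpolation) that would fail as stated.
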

In effect, in line with what is seen for log-correlated Gaussian fields, the maximum grows like $\log N - \frac 34 \log\log N$ plus a fluctuating term.  This fluctuating term depends strongly on the details of the model, although some features of it are universal.

There is a parallel story to this which has been much more fully developed for the log-potential of the CUE and its $\beta$-analogue the C$\beta$E.  There, the leading order term was first proven to hold by \cite{ABB} for the CUE, the second order term by \cite{PaquetteZeitouni} and most recently the tightness of the recentered maximum for the C$\beta$E by \cite{CMN}.

\begin{remark}
It is also possible to consider the imaginary part of the characteristic polynomial, defined as
\[
  \Im \log p_N(x) = \sum_{i=1}^N \Im \log(x-\lambda_i),
\]
where for each summand we take the principal branch of the logarithm.  %
The same predictions as for the real part of the logarithm should hold for $\Im \log p_N(x),$ appropriately adapted.  We do not consider this however, as part of our method is limited to the real part of the logarithm (specifically our reliance on the Fyodorov-Strahov formula, \eqref{Fyodorov_Strahov}).
\end{remark}

\subsection*{Our results}

We show that the first term in the conjectured expansion \ref{conj:FyodorovSimm} holds.  
Moreover, we show that this holds uniformly over a large class of random matrix ensembles with analytic potentials on $\R$. 
We will work under very few assumptions on the potential $V$ (see \cite{DKMVZ} for background).  Specifically, we assume the potential is real-analytic and \emph{regular}.  Under the first assumption, the measure $\LSD$ is supported on finitely many intervals and it has a bounded density. The second assumption implies that the density of  $\LSD$ vanishes like a square-root at the edges of these intervals and is strictly positive in the interior of the support.   

\begin{theorem}
  \label{thm:main}
  Let $V$ be a regular, real analytic potential.  Then $M_N^*/\log N \to 1$ in probability as $N\to \infty.$ %
  The lower bound holds without the requirement that $V$ is regular.
\end{theorem}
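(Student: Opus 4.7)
The plan is to prove $M_N^* \leq (1+\varepsilon) \log N$ and $M_N^* \geq (1-\varepsilon)\log N$ separately, each with probability tending to $1$. For the upper bound I would use single-point exponential moments together with a union bound. The Fyodorov--Strahov formula combined with the DKMVZ orthogonal polynomial asymptotics gives
\[
\mathbb{E}[e^{\gamma \EF(x)}] \leq C(\gamma)\, N^{c\gamma^2}
\]
uniformly for $x$ in a neighborhood of $\supp(\LSD)$, with $c$ matching the CLT variance constant for the limit field $\Lambda$. A Chernoff bound optimized in $\gamma$ yields $\Pr(\EF(x) > \alpha \log N) \leq N^{-\alpha^2/(4c)}$. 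Combined with a one-sided regularity estimate for $\EF$ between grid cells (one-sided control suffices because the logarithmic singularities of $\EF$ at eigenvalues are downward and irrelevant for an upper bound on the maximum) and a union bound over a polynomially fine grid of $[-1,1]$, this gives $\Pr(M_N^* > (1+\varepsilon)\log N) \to 0$ for every $\varepsilon > 0$. Regularity of $V$ is used to have uniform DKMVZ asymptotics across all cuts and near soft edges.

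For the lower bound I would apply a truncated second moment method to
\[
\tilde Z_\gamma = \int_{-1}^1 e^{\gamma \EF(x)}\, \mathbf{1}_{\mathcal{A}(x)}\, dx,
\]
where $\mathcal{A}(x)$ is a barrier event along the vertical ray $q = x + is \in \UH$ asking that $\EF(x + is)$ remain below a typical profile for all $s \in [N^{-1}, s_0]$. This encodes the hyperbolic branching structure invoked in the abstract: the increments $\EF(x + i2^{-j}) - \EF(x + i 2^{-(j-1)})$ are asymptotically independent and $O(1)$, so $s \mapsto \EF(x+is)$ is like a random walk of length $\log_2 N$, and varying $x$ organizes these random walks into a branching tree indexed by hyperbolic scale. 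Paley--Zygmund on $\tilde Z_\gamma$ at $\gamma$ approaching the critical value gives $\tilde Z_\gamma \gtrsim \mathbb{E}[\tilde Z_\gamma]$ with positive probability; since $\tilde Z_\gamma \leq 2 \exp(\gamma M_N^*)$, this forces $M_N^* \geq (1-\delta) \log N$ on that event. Positive probability is then boosted to $1-o(1)$ by decomposing $[-1,1]$ into a growing number of disjoint macroscopic subintervals on which the fields are approximately independent, and applying the argument on each.

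The core obstacle is the second moment bound $\mathbb{E}[\tilde Z_\gamma^2] \leq C\, (\mathbb{E}[\tilde Z_\gamma])^2$. Without truncation, the two-point exponential moment $\mathbb{E}[e^{\gamma(\EF(x)+\EF(y))}]$ grows like $|x-y|^{-c' \gamma^2}$ near the diagonal, so the naive second moment $\mathbb{E}[\tilde Z_\gamma^2]$ diverges relative to $(\mathbb{E}[\tilde Z_\gamma])^2$ well before $\gamma$ reaches the critical value needed for the $\log N$ leading order. The barrier event cures this: conditional on $\mathcal{A}(x) \cap \mathcal{A}(y)$, the processes $\EF(x+is)$ and $\EF(y+is)$ are forced onto typical trajectories until the decoupling scale $s \approx |x-y|$, below which they contribute conditionally independent fluctuations. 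This replaces the singular prefactor by a polynomially small barrier probability and yields an $O(1)$ ratio $\mathbb{E}[\tilde Z_\gamma^2]/(\mathbb{E}[\tilde Z_\gamma])^2$ all the way up to the critical $\gamma$. Both moments are computed by writing the Fyodorov--Strahov formula as Hankel-type integrals of orthogonal polynomials, evaluated asymptotically by DKMVZ, and shown to match the corresponding moments of the idealized Gaussian field $\Lambda$ to leading order. Since this comparison only uses CLT-type inputs rather than edge-specific asymptotics, the lower bound applies without the regularity assumption on $V$, accounting for the second sentence of Theorem \ref{thm:main}.
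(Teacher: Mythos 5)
Your overall strategy---an upper bound by exponential moments, discretization, and a union bound, and a lower bound by a truncated second moment method that uses barrier events along a ray to exploit the hyperbolic branching structure---is the same as the paper's, and your intuition about why the barrier fixes the naive second moment divergence is correct. However, there are three places where the proposal glosses over points that are genuinely delicate and in one case would actually fail as stated.

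First, your upper bound proposes a Chernoff bound ``optimized in $\gamma$,'' but the only handle you have on exponential moments of $\EF$ is the Fyodorov--Strahov formula, which produces expectations of balanced \emph{integer} powers of characteristic polynomials. There is no fractional-moment version to optimize over. This is not fatal: the fixed choice $\gamma = 2$ already gives $\Pr[\EF(q) \geq \log N] \leq N^{-2}\Exp[e^{2\EF(q)}]$, and with $q$ pushed a distance $y_N/N$ off the real axis the exponential moment is $O(N/y_N)$ by Corollary~\ref{prop:expbound}, so a union bound over $O(N)$ Chebyshev nodes and an interpolation inequality (Lemma~\ref{lem:CMN}) yields $M_N^* \leq \log N + O(y_N)$. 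The paper carries out exactly this; the regularization off $\R$ and the polynomial interpolation are what replace your ``one-sided regularity estimate,'' and they matter because on $\R$ itself the field is not continuous.

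Second, and more seriously, your plan to upgrade a positive-probability Paley--Zygmund bound to probability $1-o(1)$ by splitting $[-1,1]$ into macroscopic subintervals that carry ``approximately independent'' fields is a genuine gap. For the non-Gaussian field $\EF$ you have no FKG inequality, no Gaussian decorrelation tools, and no exact independence; establishing a usable form of asymptotic independence between the restrictions of $\EF$ to disjoint macroscopic intervals would itself require a delicate Fyodorov--Strahov/orthogonal-polynomial analysis, essentially as hard as the argument you are trying to shortcut. The paper sidesteps this entirely: by weighting the counting variable by $\MFI(\omega) = e^{2\WEF(\omega\zeta_{n_0}) - 2\WEF(i\zeta_{b_r})}\mathbf{1}[\WEF \in \BL[\omega]]$ rather than by an indicator alone, the Cauchy--Schwarz bound~\eqref{eq:2mm} already produces $\Pr[Z>0] \geq \frac{1}{1+\delta+o(1)}$, and letting $\delta\to 0$ after $N\to\infty$ gives the theorem. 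No independence-boosting step is needed.

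Third, you write that the moments are ``shown to match the corresponding moments of the idealized Gaussian field to leading order.'' This is the technical heart of the whole paper and cannot be treated as an afterthought. The point of Assumption~\ref{ass:wusa} (mixed exponential moments), Proposition~\ref{prop:ZG}, and the matching lemma of Section~\ref{sec:matching} is precisely that the Fyodorov--Strahov determinants, after reduction via the three-term recurrence to a $2\times 2$ Riemann--Hilbert matrix and substitution of the Deift--Kriecherbauer--McLaughlin--Venakides--Zhou asymptotics, reproduce the Gaussian exponential moments up to $1+O(N^{-\delta})$ \emph{uniformly} over the combinatorial family of biases arising in the second moment computation. The uniformity is exactly what is hard, and the ``field moment calculus'' of Section~\ref{sec:fmc} is designed to convert these exponential moment comparisons into the moment and barrier-probability estimates you are implicitly assuming. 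Finally, your explanation for why the lower bound does not require regularity of $V$ is correct in spirit: the lower bound only invokes bulk Riemann--Hilbert asymptotics (Proposition~\ref{prop:multicut}) in a shrinking neighborhood of an interior point, which do not require the square-root edge behavior that regularity guarantees.
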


We go to great lengths to show that, in some sense, the needed random matrix asymptotics already exist in the literature. 
The hypotheses on the class of random matrices involved is effectively only limited by the availability of orthogonal polynomial asymptotics. 
In our setting, we rely on \cite{DKMVZ}, which establishes the orthogonal polynomial asymptotics for varying weights on the real line by the Riemann-Hilbert steepest descent method.  
Further, in some sense, the true assumptions needed are substantially weaker than these full asymptotics.  (Moreover, we only assume that $V$ is regular to get a less technical proof of the upper-bound.  We believe that it is not necessary and could be removed using the asymptotics of  \cite{DKMVZ} near the singular points.)

\subsection*{General theory}
To prove Theorem~\ref{thm:main}, we develop an abstract machinery suited for controlling  these almost-Gaussian log-correlated fields, which in principle could be applied outside random matrix theory.  

Theorem~\ref{thm:main} consists of an upper bound and a lower bound.  The upper bound, in a sense, is much easier.  In effect, using that $\EF$ is harmonic off the real line and arises from an $N$-point measure, the problem of controlling $\EF$ from above can be reduced to estimating the Laplace transform $\Exp e^{2\EF(q)}$ for $q$ near the real line.  See Section~\ref{sec:ub} for further details.

The more complicated task is to develop a lower bound for $\EF.$  This is where we nontrivially use the limiting log-correlated structure of the field.
We begin by introducing a Gaussian harmonic function on the unit disk.
Let $\GF$ be the centered Gaussian with covariance
\begin{equation}\label{covariance_GF} 
\Exp \left[\GF(z)\GF(w) \right] = -\frac12 \log|1-z\overline{w}|.
\end{equation}
This appears as the limiting field for the log-determinant of a Haar-unitary matrix, see \cite{PaquetteZeitouni}.
The field $\GF$ is conformally invariant.  Specifically, recall the hyperbolic disk automorphism that for any $y \in \D$ is given by the map
\begin{align}
    T_y : \D \to \D, z \mapsto \frac{z-y}{1-z\bar{y}},
  \label{eq:diskautomorphism}
\end{align}
which is an isometry of the Poincar\'e disk taking $y$ to $0.$  
Then for any $y \in \D,$ $z \mapsto \GF(T_y(z)) - \GF(y)$ has the same distribution as $\GF.$  This leads its structure to naturally be described in terms of hyperbolic geometry.  
Let $\dH$ be the hyperbolic metric on $\mathbb{D}.$  For any point $z \in \mathbb{D},$ the distance of $z$ to $0$ this can be given by 
\begin{align}
  \dH(0,z) = \log \left( \frac{1+|z|}{1-|z|} \right). \nonumber
\end{align}
For two arbitrary points $y,z \in \mathbb{D},$ we can then write
  \( \dH(y,z) = \dH(0,T_y(z)). \)
A short calculation shows that the covariance structure of $\GF$ can alternatively be expressed by
\begin{align}
  \Exp[ \GF(z)\GF(y) ]
  &= \frac12\log\left( 
  \tfrac
  { \cosh( {\dH(0,y)}{2}^{-1} )\cosh( {\dH(z,0)}{2}^{-1} )}
  {\cosh( {\dH(z,y)}{2}^{-1} )}
  \right). \label{eq:covcosh}
\end{align}

One advantage of this expression is that the function $x \mapsto \log(\cosh(x))$ is uniformly Lipschitz.  Hence, for example, the correlation of an increment with any point in the field can be controlled solely in terms of the length of the increment:
\begin{equation}
  \label{eq:correlationbound1}
  \sup_{z,y,x \in \D} \frac{|\Exp\left[ (\GF(z)-\GF(y))\GF(x) \right]|}{\dH(z,y)} < \infty.
\end{equation}

The hyperbolic nature of the field $\GF$ leads it to have a natural connection to branching random walk.  Let $\left\{ \zeta_i \right\}_0^\infty$ be points on the positive real axis with $\zeta_0 = 0$ and $\dH(\zeta_i,\zeta_j) = |i-j|.$   
For $\theta \in \R,$ we wish to estimate the distance $\dH(\zeta_i, e^{i\theta}\zeta_j).$  The following Lemma, taken from \cite{PaquetteZeitouni}, exposes the branching structure of the distances and the consequential branching random walk comparison that is possible for the covariances of $\GF$. 
\begin{lemma}
  Uniformly in $h,j \in \mathbb{N}$ and $\theta \in [-\pi,\pi]$
  \[
    \dH(\zeta_h, e^{i\theta}\zeta_j)
    = h + j - 2\min\{-\log|\sin\tfrac{\theta}{2}|,h,j\} + O(1).
  \]
  When $k = \min\{h,j\} > -\log |\tfrac{\sin\theta}{2}|$ the error term can be estimated by $Ce^{-k}|\theta|^{-1}$ for some sufficiently large absolute constant $C>0.$ For the covariances of $\GF,$ it follows that
  \[
    \Exp \GF(\zeta_h)\GF(e^{i\theta}\zeta_j)
    =\tfrac{1}{2}\min\{-\log|\sin\tfrac{\theta}{2}|,h,j\}
    - \tfrac{\log 2}{2} + O(1),
  \]
  where again the error term can be estimated by $C\min\{e^{-k}\theta^{-1},1\}.$
  \label{lem:branch}
\end{lemma}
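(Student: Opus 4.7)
The plan is to derive an exact closed form for $D := \dH(\zeta_h, e^{i\theta}\zeta_j)$ from the hyperbolic law of cosines and then analyze two regimes corresponding to the $\min$ in the claim. In the Poincar\'e disk, the three points $0,\zeta_h,e^{i\theta}\zeta_j$ form a geodesic triangle whose sides meeting at the origin have hyperbolic lengths $h$ and $j$; since geodesics through $0$ are Euclidean diameters, the angle at $0$ is exactly $\theta$. The hyperbolic law of cosines therefore gives
\[ \cosh D = \cosh h\,\cosh j - \sinh h\,\sinh j\,\cos\theta. \]
If one prefers a self-contained check, substitute $\zeta_i = \tanh(i/2)$ into the Poincar\'e formula $\sinh^2(\dH(z,w)/2) = |z-w|^2/((1-|z|^2)(1-|w|^2))$ and simplify. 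Using $\cos\theta = 1 - 2\sin^2(\theta/2)$ and $\cosh h\,\cosh j - \sinh h\,\sinh j = \cosh(h-j)$, this rearranges to the working identity
\[ \cosh D = \cosh(h-j) + 2\sin^2(\theta/2)\,\sinh h\,\sinh j. \]

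Assume without loss of generality that $h\le j$, set $k = h$ and $s = |\sin(\theta/2)|$. The two summands above represent the two branches of the $\min$. If $s \le e^{-k}$ (i.e.\ $-\log s \ge k$), the second summand is at most $2e^{-2k}\sinh h\sinh j \le \tfrac12 e^{j-h}$, comparable with $\cosh(h-j)\asymp e^{j-h}$; hence $\cosh D = \Theta(e^{j-h})$ and taking logarithms yields $D = (j-h)+O(1) = h+j-2k+O(1)$. If instead $s > e^{-k}$, the ratio of the first to the second summand is $\cosh(h-j)/(2s^2\sinh h\sinh j) = O(e^{-2k}/s^2)\le 1$, so
\[ \cosh D = 2 s^2\sinh h\sinh j\bigl(1 + O(e^{-2k}/s^2)\bigr) = \tfrac{1}{2}s^2 e^{h+j}\bigl(1 + O(e^{-2k}/s^2)\bigr), \]
where the last step uses $\sinh h\sinh j = \tfrac14 e^{h+j}(1+O(e^{-2k}))$. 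Combined with the trivial identity $D = \log(2\cosh D) + O(e^{-2D})$, one obtains $D = h+j+2\log s + O(e^{-2k}/s^2)$. Since $e^{-k}/s \le 1$ in this regime we have $e^{-2k}/s^2 \le e^{-k}/s$, and $|\theta|^{-1}\asymp s^{-1}$ on $[-\pi,\pi]$, giving the refined error $Ce^{-k}|\theta|^{-1}$ asserted for the distance.

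The covariance statement then follows by plugging $D$ into \eqref{eq:covcosh} and expanding $\log\cosh(x/2) = x/2 - \log 2 + O(e^{-x})$: the three $\log\cosh$ terms collapse to $\tfrac14(h+j-D) - \tfrac12\log 2 + O(e^{-k}+e^{-D})$, which upon substituting the approximation for $D$ becomes $\tfrac12\min\{-\log s,h,j\} - \tfrac{\log 2}{2} + O(1)$, and the refined error of case B transfers verbatim (the $\min\{\cdot,1\}$ simply recording the trivial case A bound). There is no serious obstacle here: the argument is essentially a bookkeeping exercise anchored on the clean identity $\cosh D = \cosh(h-j) + 2\sin^2(\theta/2)\sinh h\sinh j$, and the only mild care required is ensuring constants remain uniform in $h,j,\theta$ across the boundary regime $s\asymp e^{-k}$, which is automatically absorbed into the $O(1)$ term.
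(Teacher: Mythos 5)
Your proposal is correct and takes essentially the same approach as the paper: the identity $\cosh D = \cosh(h-j) + 2\sin^2(\theta/2)\sinh h\sinh j$ is an algebraic rearrangement of the form of the hyperbolic law of cosines that the paper invokes, and your two-regime analysis (according to whether $|\sin(\theta/2)|$ is larger or smaller than $e^{-k}$) is exactly the "straightforward case-by-case analysis" the paper alludes to without writing out. The only difference is that you supply the bookkeeping the paper omits; the refined error bounds you derive, and the transfer to the covariance via $\log\cosh(x/2)=x/2-\log 2+O(e^{-x})$, match the paper's argument.
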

\begin{proof}
  For a hyperbolic triangle with side lengths $a,b,c$ with $\theta$ the angle opposite $a,$ the hyperbolic law of cosines says that
  \[
    \cosh a = 
    \frac{\cosh(b+c)}{2}(1-\cos\theta)
    +\frac{\cosh(b-c)}{2}(1+\cos\theta).
  \]
  We apply this with $a=\dH(\zeta_h,e^{i\theta}\zeta_j),$ $b=h$ and $c=j.$
  The remainder is a straightforward case-by-case analysis, noting that when  $k = \min\{h,j\} > -\log |\sin\tfrac\theta2|,$ the first term dominates, and otherwise the second term dominates.  Using \eqref{eq:covcosh}, this estimate can be transferred to the covariances, since for $x \geq 0,$
  \[
      \log(\cosh(\tfrac{x}{2})) = \tfrac{x}{2} - \log 2 + O(e^{-x}).
  \] 
\end{proof}

\subsection*{Connecting $\EF$ to $\GF$}

The point of introducing the field $\GF$ is that $\EF,$ at least in the neighborhood of a point in the bulk of the spectrum, is well approximated in law by $\GF.$  In the case that $V$ is one-cut regular, the connection is particularly strong (because of the CLT \eqref{eq:Lambda}). 
If the equilibrium density $\LSD$ is scaled so that its support is exactly $[-1,1]$ and we pull-back the field $\EF$ in the unit disk using the Joukowsky map  
\begin{equation} \label{eq:Joukowsky}
J(z) = \frac{z + z^{-1}}{2} ,
\end{equation}
 then $\EF \circ J$ will converge as a subharmonic function on $\C \setminus [-1,1]$ in the local uniform topology to a Gaussian harmonic function $\TGF = \Lambda \circ J $ on $\D.$
By an explicit calculation, it can be checked that this field has covariance
\begin{equation}
  \label{eq:tcov}
  \Exp\left[
    \TGF(z)\TGF(w)
  \right]
  =
  -\frac12 \log|1-zw|
  -\frac12 \log|1-z\overline{w}| . 
\end{equation}
This allows $\TGF$ to be alternately expressed as $z \mapsto (\GF(z) + \GF(\bar z))/\sqrt{2}.$  
Moreover,  one can see that in any small neighborhood of the boundary of the disk away from the edge points $1$ or $-1$ (this points are the only fixed point of the Joukowsky transform), the contribution of $-\frac12 \log|1-zw|$ will be uniformly bounded.  In effect, the covariance structure is approximately that of $\GF$ itself.

In the multicut situation, the global Gaussian convergence of $\EF$ is no longer true, but there is still a type of local Gaussian convergence:  it is still possible to compare the fluctuations of $\EF\circ J$ to $\GF$ in the neighborhood of a point in the bulk of the spectrum.
This leads us to consider a class of Gaussian fields that are locally like $\GF.$

\begin{definition}
  \label{def:brwlike}
  Say a centered Gaussian field $\WGF$ is \emph{BRW-like in a set $U \subset \D$} if 
  \begin{enumerate}[(a)]
    \item For any $z \in U,$ the function $w \mapsto \Exp\left[ \WGF(z)\WGF(w) \right]$ is harmonic in $U.$
    \item There is a constant $C>0$ so that for all $z,w \in U,$ with $\dH(z,w) \leq 1,$
      \[
	\Var( \WGF(z) - \WGF(w)) \leq C\dH(z,w)^2.
      \]
    \item There is a constant $C>0$ so that for all $y,z,w \in U,$ with $\dH(z,w) \leq 1,$
      \[
	|\Exp\left[ \WGF(y)( \WGF(z) - \WGF(w)) \right]| \leq C\dH(z,w).
      \]
    \item There is a function continuous function $K : \T^2 \to \R$ so that for all $\zeta_h e^{i\theta_1} \in U$ and $\zeta_j e^{i\theta_2} \in U,$
      \[
	\Exp \WGF(\zeta_h e^{i\theta_1})\WGF(\zeta_j e^{i\theta_2})
	=\tfrac{1}{2}\min\{-\log|\sin(\tfrac{\theta_1-\theta_2}{2})|,h,j\}
	+K(\theta_1,\theta_2) + O(1),
      \]
      where the error term goes to $0$ uniformly as $\min\{h,j\} + \log|\sin\tfrac{\theta_1-\theta_2}{2}| \to \infty.$ %
  \end{enumerate}
\end{definition}

\begin{remark}
The first condition, that the covariance is harmonic, implies that $\WGF$ is almost surely harmonic in $U$ and  it can be easily checked that 
\[
\Exp \biggl(
\int_\gamma \WGF(z) d\mu_w(z) - W(w)
\biggr)^2 = 0,
\]
for smooth, Jordan curves in $U$ with $w$ a point in the interior of the curve and $\mu_w(z)$ the harmonic measure on the curve seen from $w$.
\end{remark}

\subsection*{Lower bound overview}

The field $\WGF,$ by virtue of being Gaussian and having a log-correlated structure, is amenable to general tools for Gaussian fields: for example very precise information on its maximum on any hyperbolic ball is available using theory in \cite{DRZ}.  In the problems we study here, we are given a sequence of \emph{harmonic} random fields $\WEF$ that are not necessarily Gaussian but are well-approximated by $\WGF. $   Our task is to provide simple conditions of comparison that will produce a lower bound for the maximum of $\WEF$ in the large $N$ limit. These conditions are given solely in terms of mixed moments of exponentials of $\WEF.$ 

In what follows, we fix $\delta >0$ a small positive constant, that we will ultimately take to $0.$  We let $n_0 = \lfloor (1-\delta) n \rfloor$ and introduce the set 
$$\Omega = \left\{ e^{i(\tfrac{\pi}{2} + h e^{-n_0})} : h \in \mathbb{Z}, |h| < N^{-\delta}e^{n_0} \right\}.$$  We will find a lower bound for $\WEF$ by considering the values of $\WEF$ at the points $\left\{ \omega \zeta_{n_0} : \omega \in \Omega \right\}.$ 
To do this, however, we will also consider the behavior of $\WEF$ at points closer to the origin of $\D.$
So, define a domain
\begin{equation*}
  \mathcal{D}_{N,\delta}
  =\left\{ 
    i re^{i\theta}
    \ |\ 1 - N^{-\delta} \leq r \leq 1 - N^{-1+\delta},
    |\theta| \leq N^{-\delta}
  \right\}.
\end{equation*}
This is the set in which we compare $\WEF$ and $\WGF.$

We will take $b^*=b^*_N$ to be an integral valued sequence so that $b^*_N \asymp \delta \log N$ (it will be defined precisely in Section~\ref{sec:lb}).  Then, we will define the field statistics, for $\omega \in \Omega,$
\[
  \Bias[\omega]: \F \mapsto
    2\F(\omega \zeta_{n_0})
    -2\F(i \zeta_{b^*}).
\]
It will essentially suffice to show that some $\Bias_\omega(\WEF)$ is on the order of $(1-O(\delta))\log N.$
Define $\mathfrak{W}_{\ell,\delta}(\Bias[\omega])$ as the set of all cylinder functions
\[
 \Bias: \F \mapsto \Bias[\omega](\F) + 
  \sum_{z \in \mathbf{z}} 2\F(z)
  -\sum_{w \in \mathbf{w}} 2\F(w),
\]
where $\mathbf{z}$ and $\mathbf{w}$ are subsets of $\mathcal{D}_{N,\delta}$ with  $|\mathbf{z}|=|\mathbf{w}|=\ell$
and the property that:
if we denote $Z =  \mathbf{z} \cup \left\{ \omega \zeta_{n_0} \right\}$ and $W=\mathbf{w} \cup \{ i \zeta_{b^*}\}$,
    there is a bijection $\phi : Z \to W$  so that for all $z\in \mathbf{z}$
    \begin{equation*} 
      \dH(z,\phi(z))\leq 
       \min\left\{  
      \min_{z' \in Z\setminus \{z\}} 
      \dH(z,z') ;
      \min_{w \in W \setminus \{\phi(z)\}} 
      \dH(w,\phi(z)) \right\}.
    \end{equation*}
    These field statistics are in some sense local perturbations of $ \Bias[\omega](\F).$

    We make a quantitative assumption on how well $\Bias{\WEF}$ can be approximated by $\Bias{\WGF}$ in the sense of exponential moments.  
    \begin{assumption}
      \label{ass:wusa}
      Let $\ell \in \N.$  We define the \emph{mixed exponential moment} assumption $\WUSA[\ell]$ to be that the following holds.
      For all $\delta > 0$ sufficiently small:
      \begin{enumerate}
	\item The function $\WEF(z)$ is almost surely harmonic in $\mathcal{D}_{N,\delta}.$ 
	\item Uniformly in $\omega \in \Omega$ and $\Bias \in \mathfrak{W}_{\ell,\delta}(\Bias_\omega),$
	  \[
	    \Exp\left[ e^{\Bias{\WEF}} \right]
	    =
	    \Exp\left[ e^{\Bias{\WGF}} \right]
	    (1+O(N^{-\delta})).
	  \]
      \end{enumerate}
    \end{assumption}

    At first sight, it may not be clear that these assumptions even imply, in any sense, that $\WEF$ converges in law to $\WGF.$ However, as a corollary of these assumptions, the mixed moments of $\WEF$ can be compared to the mixed moments of $\WGF$ with a vanishing error (see Proposition~\ref{prop:mixedmoments}).  
    Under the assumption $\cap_{\ell=1}^\infty \WUSA[\ell],$ which we show holds for $\EF \circ J,$ one can deduce Gaussian scaling limits of $\WEF$ as a direct consequence of Proposition~\ref{prop:mixedmoments}.

\begin{theorem}
    \label{thm:subharmoniclb}
    Under Assumption $\WUSA[2]$, for any $\delta >0,$
    \[
      \limsup_{N\to\infty} \Pr[\max_{z \in \D} [\WEF(z) - \WEF(i\zeta_{b*})] < (1-\delta){\log N}]=0.
    \]
\end{theorem}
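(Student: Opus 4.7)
My plan is to prove Theorem \ref{thm:subharmoniclb} by a (modified) second moment method applied to the family $\{\Bias[\omega](\WEF) : \omega \in \Omega\}$, in the spirit of the classical lower bound for the maximum of a branching random walk (BRW), but carried out entirely through the mixed exponential moments provided by Assumption $\WUSA[2]$. Fix a threshold $L$ slightly below $2(1-\delta)\log N$ (the setup parameters will be chosen a small multiple of the theorem's $\delta$), and let $B_\omega = \bigcap_{b^* \le k \le n_0}\{\WEF(\omega\zeta_k) - \WEF(i\zeta_{b^*}) \le (k-b^*)+A\}$ be a barrier event with truncation constant $A$ to be taken large. Consider
\[
  X \;=\; \sum_{\omega \in \Omega} \mathbf{1}\{\Bias[\omega](\WEF) \geq L\}\,\mathbf{1}_{B_\omega}.
\]
The goal is to prove $\Exp X \to \infty$ and $\Exp X^2 \leq C(\Exp X)^2$, so that by Paley--Zygmund, $\Pr[X \geq 1] \geq c > 0$ uniformly in $N$. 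Since $\{X\geq 1\}$ forces some $\omega$ to satisfy $\WEF(\omega\zeta_{n_0}) - \WEF(i\zeta_{b^*}) \geq L/2 = (1-\delta)\log N$, this proves the theorem up to the standard upgrade from positive probability to probability tending to one, obtained by partitioning $\Omega$ into a diverging number of asymptotically decorrelated angular sectors and applying the same argument to each (the cross-sector decorrelation is again supplied by $\WUSA[2]$).

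For the \emph{first moment}, Assumption $\WUSA[2]$ (with no auxiliary points) gives $\Exp e^{\Bias[\omega](\WEF)} = (1+o(1))\,\Exp e^{\Bias[\omega](\WGF)} = (1+o(1))\,e^{\sigma^2/2 + O(1)}$, where $\sigma^2 = \Var \Bias[\omega](\WGF) = 2(n_0 - b^*) + O(1)$ by Lemma \ref{lem:branch} and Definition \ref{def:brwlike}(d). A Paley--Zygmund argument on the normalized exponential $M_\omega \propto e^{\Bias[\omega](\WEF)}$, combined with a lower bound on the tilted probability of $B_\omega$ (obtained by union-bounding $B_\omega^c$ over intermediate depths $k$, each indicator being dominated by an exponential and handled via $\WUSA[2]$ applied to the cylinder function carrying the test point $\omega\zeta_k$), yields $\Pr[\{\Bias[\omega](\WEF)\ge L\}\cap B_\omega] \gtrsim N^{-1+c\delta}$; summed over $|\Omega|\sim N^{1-2\delta}$ this gives $\Exp X \to \infty$ for suitably tuned setup parameters. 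For the \emph{second moment}, the pair statistic $\Bias[\omega_1]+\Bias[\omega_2]$ is (up to absorbing the shared base point $i\zeta_{b^*}$) a cylinder function in $\mathfrak{W}_{1,\delta}(\Bias[\omega_1])$, so by $\WUSA[2]$ and the Gaussian identity for $\WGF$,
\[
  \Exp e^{\Bias[\omega_1](\WEF)+\Bias[\omega_2](\WEF)} \;=\; (1+o(1))\, e^{\sigma^2 + \kappa(\omega_1,\omega_2)},
\]
where $\kappa(\omega_1,\omega_2) = \Cov\bigl(\Bias[\omega_1](\WGF),\Bias[\omega_2](\WGF)\bigr) \approx 2\bigl(\min\{-\log|\sin(\tfrac{\theta_{12}}{2})|,\, n_0\} - b^*\bigr)$ by Lemma \ref{lem:branch}, and decays with the angular separation $\theta_{12}$. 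Decomposing the sum over pairs by MRCA depth produces a geometric series; the barrier $B_\omega$ suppresses the dominant contribution of deep-MRCA pairs, yielding $\Exp X^2 \leq C(\Exp X)^2$ — the standard BRW modified second moment cancellation.

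The \emph{principal obstacle} is the implementation of the barrier $B_\omega$ entirely within the exponential-moment-only framework of $\WUSA[2]$. Classical BRW modified second moment arguments control the pathwise behavior of the branching walk directly, but here the only quantitative handle on $\WEF$ is mixed exponential moments of finite cylinder functions in $\mathfrak{W}_{\ell,\delta}$. The barrier $B_\omega$ must therefore be realized through union bounds and exponential Markov estimates, each invocation requiring a fresh cylinder function within the $\ell = 2$ budget of $\WUSA[2]$. Coordinating these auxiliary test points — some devoted to the barrier, some to the pair statistic — with the depth parameters $n_0$, $b^*$, the truncation level $A$, and the setup parameter $\delta$ (chosen small relative to the theorem's $\delta$) is the technical heart of the argument.
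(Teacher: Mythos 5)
Your high-level strategy — a modified second moment method on a barrier event along the rays $\{\omega\zeta_j\}$, using Paley--Zygmund — is in the same family as the paper's argument, but the formalism you chose is precisely the one the paper identifies as unworkable, and the workaround you propose does not close the gap.

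The paper explicitly discusses the naive second moment method on the counting variable $\tilde Z = \sum_\omega \one[\WEF\in\BL[\omega]]$ and then abandons it. The reason is structural: Assumption $\WUSA[2]$ hands you only mixed \emph{exponential} moments of $\WEF$, with errors of the form $(1+O(N^{-\delta}))$. To make $\Exp X^2 / (\Exp X)^2 \to 1$ in your setup, you would need near-\emph{multiplicative} factorization of probabilities of joint barrier/threshold events for almost all well-separated pairs $(\omega_1,\omega_2)$. In branching random walk this comes from literal independence beyond the MRCA; here there is none, and moving between the $\WEF$ and $\WGF$ measures through tilting (to convert exponential moment control into probability estimates) incurs multiplicative losses that are bounded but not $1+o(1)$. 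Thus even granting your first-moment and second-moment estimates as stated, they give only $\Pr[X\geq 1]\geq c$ for some $c\in(0,1)$, not $c\to 1$. The paper sidesteps this by working instead with the exponentially biased counting variable $Z = \sum_\omega \MFI(\omega)$, $\MFI(\omega) = e^{\Bias_\omega(\WEF)}\one[\WEF\in\BL[\omega]]$, for which the cross term $\Exp[\MFI(\omega_1)\MFI(\omega_2)]$ can be bounded by simply dropping the indicator: the result is a pure mixed exponential moment, to which $\WUSA[2]$ applies directly with $(1+\epsilon)$ error (Lemmas~\ref{lem:finefield2pUB1} and~\ref{lem:finefield2pUB2}). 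That is the key idea your proposal is missing.

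Your proposed upgrade from $\Pr[X\geq1]\geq c$ to a probability tending to one — partitioning $\Omega$ into many angular sectors and invoking ``cross-sector decorrelation'' — is not supported by $\WUSA[2]$. The assumption gives you factorization of exponential moments across well-separated sets of points, which is not the same as approximate independence of events $\{X_j=0\}$. You cannot multiply out $\Pr[\cap_j\{X_j=0\}]$ without a genuine independence or mixing statement, and no such statement is available from exponential moment data alone. The paper gets the probability to $1$ without any such device: it makes the setup parameter $\delta'$ (and hence $b_r$, $|\Omega|$, etc.) small, obtaining $\Pr[Z>0]\geq 1/(1+\delta'+o(1))$ and then sending $\delta'\to 0$ after $N\to\infty$. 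A secondary issue is that your barrier $B_\omega$ is one-sided; the paper's $\BL[\omega]$ is two-sided, which is what allows the exponential factor $e^{\Bias_\omega(\WEF)}$ appearing in $\MFI$ to be controlled from both sides on the event — the one-sided version leaves the exponential unbounded above, which would also obstruct a clean first-moment lower bound in the framework you describe.
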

\noindent One can make a comparison between this theorem and four-moment theorems of \cite{TaoVu} -- a small number of moments of the exponential of the field determines the maximum -- though the methods of proof could not be more different.

We give the proof of this theorem and an overview of the method in Section~\ref{sec:lb}.   For the field $\EF \circ J,$ a direct application of Corollary~\ref{prop:expbound} below and Markov's inequality shows that with probability going to $1,$ $\EF(J(i\zeta_{b^*})) > -C\delta \log N$ for some sufficiently large $C.$  Therefore, upon verifying Assumption~\ref{ass:wusa} for $\ZF,$ the lower bound in Theorem~\ref{thm:main} follows.

\subsection*{Characteristic polynomials}
Let $A_N$ be a unitarily invariant random matrix (chosen with probability proportional to the weight $e^{-N\tr(V(A_N))}$ on the space of $N\times N$ Hermitian matrices).  We consider $\ESD$ to be the empirical spectral measure of the  matrix $A_N$ and take $\LSD$ to be the corresponding equilibrium measure. By \cite[Theorem~2.1]{Kurt98}, $\LSD$ is the (unique) probability measure which minimize the energy functional
$$
\mathcal{I}_V(\mu) = \iint_{\R^2} \left( \log|t-u|^{-1} + \frac{V(t)+V(u)}{2} \right) \mu(dt)\mu(du) . 
$$
Moreover, it is absolutely continuous with respect to the Lebesgue measure: $\LSD(du)= \LSD(u) du$, and 
 if the potential $V$ is real-analytic and regular, by \cite[(1.6)--(1.8)]{DKMVZ}, we have 
\begin{equation}\label{equilibrium}
\LSD(u) = h(u)\1_{ \mathbf{J}}(u) \prod_{i=0}^m \sqrt{ (a_{i+1}-u)(u-b_i)}  
\end{equation}
where the function $h$ is real-analytic, strictly positive on
\begin{equation}\label{J}
 \mathbf{J}=  (b_0,a_1) \cup (b_1,a_2)\cup \dots \cup (b_m,a_{m+1}) ,
\end{equation}

 and  the intervals $  (b_0,a_1), \dots$, $(b_m,a_{m+1})$ are disjoint.

In the previous sections, we have outlined an approach to controlling the maximum of \begin{equation} \label{eq:EF}
  \EF(q) = \log| \det(q- A_N)| - \Exp\big[ \log| \det(q- A_N) |\big]  
\end{equation}
which reduces the problem to estimating mixed moments of $| \det(q- A_N)|^{\pm 2}$ for various points $q$ near the bulk $\mathbf{J}$.  We now elaborate on how to do such estimates when $V$ is a regular, real-analytic potential.

Besides the asymptotics of the monic orthogonal polynomials with respect to measure $e^{-NV(x)}dx$ that are given in \cite{DKMVZ}, we use one other random matrix tool: the Fyodorov-Strahov formula \cite{FS}.  This formula, which we will introduce presently, allows for expectations of ratios of characteristic polynomials to be expressed in terms of determinants involving the orthogonal polynomials and their Cauchy transforms. %

For any $\ell , k\ge 1$ and $\q = (q_1, \dots , q_\ell) \in \C^\ell$, we define the matrix
\begin{equation} \label{V_matrix}
 \V_k(\q) =  \begin{pmatrix}
1  & q_1 & \hdots & q_1^{k-1}  \\
\vdots & & & \vdots \\
1 & q_\ell &\hdots & q_\ell^{k-1}  
 \end{pmatrix} . 
\end{equation}
In particular, we denote Vandermonde matrix, $\V(\q) = \V_\ell(\q)$,  and 
its determinant
\begin{equation} \label{V_det}
\Delta(\q) := \big| \V(\q) \big| =  \prod_{1 \le i <j \le k} (q_j-q_i) . 
\end{equation}
Let $\pi_n$ be the monic orthogonal polynomial of degree $n$ with respect to the weight $e^{-NV(x)}$ on $\R.$  We define the normalizing constants $(\gamma_n)_{n\ge 0}$ of these polynomials by the formula
\begin{equation}
  \label{eq:gamman}
 \int \pi_n(x) \pi_m(x)   e^{-NV(x)} dx = \gamma_n^{-2} \delta_{nm }.
\end{equation}
For any $q \in \C \setminus \R$, we introduce the Cauchy transform 
\begin{equation} \label{h_n}
h_n(q) = \frac{1}{2\pi i} \int \frac{\pi_n(x)}{x-q}   e^{-NV(x)} dx . 
\end{equation}
The normalizing constants $\gamma_n$ also play a role in determining the $3$-term recurrence for $\pi_n.$ Specifically, there exists a sequence  $\beta_0, \beta_1, \dots \in \R$ such that
\begin{equation} \label{OP_recurrence}
 \pi_{n+1}(x) + \left(\frac{\gamma_{n-1}}{\gamma_{n}}\right)^2  \pi_{n-1}(x)  = (x- \beta_{n})  \pi_n(x) ,
\end{equation}
with $\pi_{-1} \equiv 0$.
By taking the Cauchy transform of both sides, it can be seen that $h_n$ necessarily satisfies the same recurrence for all $n \geq 0.$
One should also keep in mind that, even if we do not write it explicitly, all these sequences $\pi_1, \pi_2, \dots$ and $h_0, h_1, \dots$ depend on the dimension $N$ as the weight $e^{-NV(x)}$ is varying.

The Fyodorov-Strahov formula (\cite[(8)]{FS}) states that, for any $\ell, k \ge 0$, 
\begin{equation} \label{Fyodorov_Strahov}
 \E\left[  \frac{  \prod_{i=1}^{\ell} \det(p_i-A_N)}{ \prod_{j=1}^{k}  \det(q_{j}-A_N)} \right] 
 = \frac{\prod_{j=1}^k(- 2\pi i \gamma_{N-j}^2)}{ {(-1)^{\binom{k}{2}}}   \Delta(q)\Delta(p)}
 \begin{vmatrix}  
  {h}_{N-k}(q_1)  & \hdots & h_{N+\ell-1}(q_1)  \\
 \vdots  & & \vdots\\
 {h}_{N-k}(q_k)  & \hdots  & h_{N+\ell-1}(q_k)  \\
 {\pi}_{N-k}(p_1)  & \hdots  & \pi_{N+\ell-1}(p_1) \\
 \vdots   & &  \vdots\\
 {\pi}_{N-k}(p_\ell)  & \hdots  & \pi_{N+\ell-1}(p_\ell)  \\
 \end{vmatrix}.
 \end{equation}
Care should be taken in the sign conventions in comparing this formula to the original.
By applying the three-term recurrence, this determinant can be reduced to an expression involving only $\left\{ \pi_N, \pi_{N-1}, h_N, h_{N-1} \right\},$ at least in the $k=\ell$ case (see Lemma~\ref{lem:FSbalanced} below). Note that, if $k \neq \ell$, this expression would involve a sum of determinants and we will avoid this case by considering only balanced ration (i.e.~$\ell=k$).
This reduction is useful due to the nature of the \cite{DKMVZ} asymptotics.  While the Riemann-Hilbert steepest descent method can handle joint asymptotics of 
\[
  \left\{ (\pi_n, h_n) : N- \kappa \leq n \leq N \right\}
\]
for fixed $\kappa \in\N$ (see \cite[Above Theorem 1.1]{DKMVZ}), it is typically formulated just for the matrix 
\begin{equation} 
  \label{Y_matrix}
  Y_N(q) = \begin{pmatrix}
    \pi_N(q) & h_N(q) \\  \tilde{\pi}_{N}(q) & \tilde{h}_{N}(q) 
  \end{pmatrix},
  \hspace{.6cm}\text{where}\hspace{.6cm}
  \begin{pmatrix}
    \tilde{\pi}_{N}(p) \\
    \tilde{h}_{N}(q) 
  \end{pmatrix}
  =     -2\pi i \gamma_{N-1}^2
  \begin{pmatrix}
 \pi_{N-1}(p) \\
 h_{N-1}(q)
  \end{pmatrix}.
\end{equation}
This is the unique analytic function in $\C\backslash\R$ so that for all $x\in \R$,
\begin{equation} \label{RHP}
Y_{N,+}(x)= Y_{N,-}(x) 
\begin{pmatrix}
 1 & e^{-NV(x)} \\  0 & 1  
\end{pmatrix}
\end{equation} 
and 
$\displaystyle Y_{N}(q)= \begin{pmatrix}
 q^N & 0 \\  0 & q^{-N}\end{pmatrix} \left( \I  + \underset{q\to\infty}{O}(q^{-1}) \right)$. 
The main delicate point in our application of these asymptotics is to ensure the uniformity of the error estimates required by Assumption~\ref{ass:wusa}.

\subsection*{Organization}
In Section~\ref{sec:opbackground}, we give an overview of the asymptotics of orthogonal polynomials in the plane. We closely follow the argument of \cite{DKMVZ} in the regular case, giving enough details so that the presentation is self-contained. 
In particular, we deduce from these asymptotics, estimates on exponential moments of the log-potential of the field $\EF$. 
In Section~\ref{sec:opmeso}, we show how  to compare mixed exponential moments of $\EF$ to the mixed exponential moments of an idealized Gaussian field $\GF$. Hence, verifying the assumption~~\ref{ass:wusa} and, by applying theorem~\ref{thm:subharmoniclb}, completing the lower-bound in  Theorem~\ref{thm:main}. Section~\ref{sec:matching} contains a key combinatorial estimate needed in Section~\ref{sec:opmeso}. In Section~\ref{sec:ub}, we show the complete upper bound in Theorem~\ref{thm:main} using the estimates from Section~\ref{sec:opbackground}.
Finally in Section~\ref{sec:lb}, we prove the general lower bound Theorem~\ref{thm:subharmoniclb}. \\

\subsection*{Notation}
For an $n \times n$ matrix, we will use the norm
$$
\| A\| = \sup\big\{ |A_{ij}| :  i,j \in \{1,2, \ldots, n\}  \big\} 
$$

We use $\ll,\gg,$ and $\asymp$ notation.  For two functions $f$ and $g$ of some set $X \to \R$ we say $f \ll g$ if there is a constant $c>0$ so that $f(x)  \leq c g(x)$ for all $x \in X.$   In some cases, we may allow the constant to have some parameter dependence, in which case we will explicitly state its dependence or write $f \ll_\epsilon g$ which is to say that for each $\epsilon>0$ there is a constant so that the inequality holds.  The notation $f \gg g$ means $g \ll f,$ and the notation $f \asymp g$ means $f \ll g$ and $f \gg g.$

We additionally use $O(\cdot)$ notation.  For a function $f: X \to \R$, $g=O(f)$ if and only if $|g| \ll f.$  If we wish to restrict or clarify the parameter dependence of $O(\cdot),$ we will display the parameters beneath the $O(\cdot).$

\subsection*{Acknowledgements}
We would like to thank Ofer Zeitouni and Kurt Johansson for helpful conversation.  We would especially like to thank Ofer for pointing out the Fyodorov-Strahov formula which started this project.

\section{OP asymptotics background}
\label{sec:opbackground}

The aim of this section is to review the asymptotics of the solution $Y_N$ of the RHP \eqref{RHP}. We will not need the explicit solution given in \cite{DKMVZ}, but the special form of the solution will be important for us to check the assumption~\ref{ass:wusa} in section~\ref{sec:opmeso}. So we will review in details how to apply  the Deift-Zhou steepest descent method to  the RHP \eqref{RHP},   collecting the estimates which are important along the way.
Recall that by \eqref{equilibrium}--\eqref{J}, the equilibrium measure is supported on the closure of 
\[
 \mathbf{J}=  (b_0,a_1) \cup (b_1,a_2)\cup \dots \cup (b_m,a_{m+1}).
\]
For notational convenience, we set $a_0 = a_{m+1}$  and $\mathbf{J}^* = \R \backslash \overline{\mathbf{J}} $.  Viewing these sets on the Riemann sphere, this makes $(a_0, b_0)$ into an interval containing the point at infinity.    Let 
\begin{equation} \label{g_function}
  g(q) = \int_\R \log(q-u) \LSD(du) .
\end{equation}
Note that this function appears explicitly in the normalization of the field $\EF$,~\eqref{eq:EF}.
Moreover, $g$ is analytic  in $\C\backslash(-\infty, a_{0})$ and it follows from the Euler-Lagrange equation defining the equilibrium measure that  there exists a real constant $\ell_V$  so that the function 
\begin{equation} \label{EL}
 H(x) : = - V(x)- \ell_V+ g_+(x) + g_-(x)  
\end{equation}
satisfies  the conditions $H(x) = 0$ for all $x\in \mathbf{J}$ and, if the potential $V$ is regular, 
\begin{equation}\label{H_estimate}
H(x)<0 , 
\quad\quad x\in \mathbf{J}^* ,
\end{equation}
see \cite[(1.10-1.13)]{DKMVZ}. On the other hand, for any $x\in\R$, we have
$$
g_+(x) - g_-(x)  = 2\pi i \int^{a_0}_x \LSD(du) .
$$
When $V$ is analytic, this function can be analytically continued in both the upper and lower half plane in  a neighborhood of $\mathbf{J}$; \cite[(3.43-3.46)]{DKMVZ}. 
To be more specific, we can express $2\pi i \rho(du) = \sqrt{\mathrm{Q}(u)} du$ for all $s\in \mathbf{J}$ where $\mathrm{Q}(s)$ is real-analytic and $\sqrt{\mathrm{Q}}$ is given by the principal branch;  \cite[(3.3-3.5)]{DKMVZ}. Then, these analytic continuations are given by 
\begin{equation} \label{G_function}
G(z) = \pm   \int^{a_0}_z\sqrt{\mathrm{Q}(u)}du , 
\quad\quad z\in \UH_{\pm} .
\end{equation}
In particular, if the potential $V$ is regular, then $Q(x)<0$ for all $x\in \mathbf{J}$ and all the edge points of $ \mathbf{J}$ are simple zeros of $Q$.
Then a simple argument shows that  for all $x\in \mathbf{J}$, there exists a constant $\mathrm{c}>0$ so that 
\begin{equation} \label{G_estimate}
\pm \Re G(x \pm i y ) > \mathrm{c} y , 
\end{equation}
when $y >0 $ and sufficiently small. 
We are now ready to  present the asymptotics of the matrix $Y_N$, \eqref{Y_matrix}.
We begin by introducing
\begin{equation}  \label{eq:pauli}
  M_N(q) = e^{-N\ell_V/2 \sigma_3} Y_N(q) e^{-N(g(q)-\ell_V/2)\sigma_3},
\end{equation}
where $\sigma_3 =     \begin{pmatrix}
      1 &0\\
     0 & -1 \\
    \end{pmatrix}$. This normalization implies that 
$ M_{N}(q)=  \I  + \underset{q\to\infty}{O}(q^{-1}) $.
Let us {\it deform} the RHP by introducing {\it lens-shaped regions} delimited by smooth arcs $\Sigma_{\pm}$, see~\cite[Figure 5 p.60]{Kuijlaars},  and  a new matrix
\begin{equation} \label{eq:M_matrix}
 M_N(q) = S_N(q) U_N(q) ,
\end{equation}
where 
\begin{equation} \label{eq:U_matrix}
U_N(q) = \begin{cases} 
\I&\text{if } q \text{ is outside the lens-shaped regions} \\
\begin{pmatrix}
 1 & 0 \\  e^{\mp N G(q)}& 1  
\end{pmatrix}
 &\text{if } q \text{ lies inside the $\pm$ lens-shaped regions}
\end{cases} .
\end{equation}
Note that we {\it open the lenses} in such a way that the condition \eqref{G_estimate} holds inside  lens-shaped regions, except in an $\epsilon$-neighborhood of the end-points of $ \mathbf{J}$. In particular, we obtain the estimate:
\begin{equation} \label{U_estimate}
\| U_N(q) - \I \| \le e^{- \mathrm{c} N |\Im q|} , 
\end{equation}
 for all $q\in \C \backslash \R$, $\epsilon$-away from the end-points of $ \mathbf{J}$. 
Using the above definitions, we claim that $S_N$ is the solution of the 
following RHP:
\begin{equation}
  \begin{array}{ll}
     S_N \text{ is analytic in }\C \setminus \big\{ \R\cup \Sigma_{\pm} \big\} , \\
   S_{N,+}(q) =S_{N,-}(q)\nu_N(q) , &q\in\mathbf{J}^*
     \\
   S_{N,+}(q) =S_{N, -}(q)
    \begin{pmatrix}
     0  & 1\\
     -1 & 0 \\
    \end{pmatrix}, 
     &q\in\mathbf{J} \\
    S_{N,+}(q) =S_{N,-}(q) 
    \begin{pmatrix}
 1 & 0 \\  e^{\mp N G(q)}& 1  
\end{pmatrix},
 &q\in\Sigma_{\pm}\\
    S_N(q) \to  \I , &\text{as }q \to \infty 
  \end{array}
  \label{eq:S}
\end{equation}
where 
$$
\nu_N(q) = \begin{pmatrix}
       e^{iN\Omega_j}  & e^{-N H(q)}\\
     0 & e^{-iN\Omega_j} \\
    \end{pmatrix}, \quad q \in (a_j,b_j)  , \quad j=0,\dots, m 
$$
and the  parameters  $\left\{ \Omega_j \right\}_0^m$ are defined by
\[
  \Omega_j = 2\pi  \int_{b_j}^\infty \LSD(du) . 
\]
In particular, we have $\Omega_0=0$, so that the  jump matrix $\nu_N$ is exponentially close to the identity on the interval $(a_0, b_0)$ which contains $\infty$.
We refer to \cite{DKMVZ} sections 3.3 and 4.1 for the details of this construction, or to sections 5.1-5.2 in the lecture notes \cite{Kuijlaars} for a comprehensive presentation.
In particular, in  \cite{Kuijlaars},  the constructions of the global and edge {\it parametrices} are explained in detail in the one-cut regular case.\\

 We are left with the task of finding a solution of \eqref{eq:S}. 
The idea is to disregard the terms which converge to 0 in the jump matrices as $N\to\infty$. By \eqref{H_estimate} and \eqref{G_estimate}, this leads us to consider the following RHP:
\begin{equation}
  \begin{array}{ll}
  M^\infty_N \text{ is analytic in } \C\setminus [b_0, a_0] , \\
 M^\infty_{N,+}(q) =M^\infty_{N,-}(q)\nu_N^\infty(q) , &q\in\mathbf{J}^*
     \\
   M^\infty_{N,+}(q) =M^\infty_{N, -}(q)\sigma,  &q\in\mathbf{J} \\
    M^\infty_N(q) \to  \I , &\text{as }q \to \infty  
  \end{array}
  \label{eq:M_infty}
\end{equation}
with
$$
\nu^\infty_N(q) = \begin{pmatrix}
       e^{iN\Omega_j}  & 0\\
     0 & e^{-iN\Omega_j} \\
    \end{pmatrix}, \quad q \in (a_j,b_j)  , \quad j=0,\dots, m  ,
$$
and 
$$
\sigma =      \begin{pmatrix}
     0  & 1\\
     -1 & 0 \\
    \end{pmatrix} .
$$
The matrix  $M^\infty_N$ is usually called the {\it global parametrix} and we expect that
$  S_N(q)  \sim  M^\infty_N(q)$ as $N\to \infty$ (at least $\epsilon$-away  from the endpoints of $\mathbf{J}$ where the terms that we neglected do not tend to 0). 
In \cite{DKMVZ}, the matrix  $M^\infty_N$ is constructed in two steps. 
First, we consider the function
\begin{equation}
  \gamma(q) = 
  \biggl[
    \prod_{i=0}^m \frac{q-b_i}{q-a_{i+1}}
  \biggr]^{1/4} .
  \label{eq:gamma}
\end{equation}
The branches are chosen so that $\gamma$ is analytic on $\C \backslash\mathbf{J}^*$, $\gamma(q) \sim 1$ as $q\to\infty$ in $\UH_+$, and it satisfies a jump condition:
\(
\gamma_+(q) = i \gamma_-(q)
\) on $\mathbf{J}^*$, c.f.\,\cite[(4.31)]{DKMVZ}.
  Beware that, to remain consistent with existing literature,  we use the notation $\gamma$ and $\gamma_n,$ emphasizing that with a subscript, we refer to the normalizing constant \eqref{eq:gamman}. In particular, if we define the matrix
\begin{equation}   \label{H_matrix}
  H=
  \begin{pmatrix}
    \tfrac{\gamma + \gamma^{-1}}{2} 
    &
    \tfrac{\gamma - \gamma^{-1}}{-2i}  \\
    \tfrac{\gamma - \gamma^{-1}}{2i} 
    &
    \tfrac{\gamma + \gamma^{-1}}{2}  \\
  \end{pmatrix} , 
 \end{equation}
then it is the (unique) solution of the RHP:
\begin{equation}   \label{eq:H}
  \begin{array}{ll}
 H  \text{ is analytic in } \C\setminus \overline{\mathbf{J}^*} ,\\
    H_+(q) = H_-(q) \sigma^{-1} , &q \in \mathbf{J}^*  \\
    H(q) \to  \I , &\text{as }q \to \infty , q \in \UH_+ \\
    H(q) \to \sigma, &\text{as }q \to \infty , q \in \UH_-,  
  \end{array} 
\end{equation}
with $\text{L}^2$--boundary values.

Note that, by \cite[Lemma 4.1]{DKMVZ}, all the zeros of the functions 
$  \gamma \pm \gamma^{-1}$  are located at points $z_j \in (a_j, b_j)$ for $j=1,\dots, m$ on the $\mp$-side of $\mathbf{J}^*$ respectively. 
To complement this object, we need a matrix of oscillatory functions
$$  \Theta(q) = 
  \begin{pmatrix}
    \vartheta_1(q)
    &
    \vartheta_2(q) \\
    \vartheta_3(q)
    &
    \vartheta_4(q)
  \end{pmatrix}, \quad  q \in \C\setminus \overline{\mathbf{J}^*} 
 $$
 that attains boundary values almost everywhere on $\mathbf{J}^*$ and 
 that solves the RHP:
\begin{equation}
  \begin{array}{ll}
    \Theta    \text{ is analytic in } \C\setminus \overline{\mathbf{J}^*} ,\\
    \Theta_+(q) = \Theta_-(q)
    \left(
    \begin{smallmatrix} 
      0 & 1 \\
      1 & 0 \\
    \end{smallmatrix}
    \right)
    \nu_N^\infty(q) , &q\in\mathbf{J}^* \\
    \Theta (q) = c_\pm
      + O\left(\frac{1}{q}\right), & \text{as } q \to \infty , q \in \UH_\pm
  \end{array}
  \label{eq:Theta}
\end{equation}
for some appropriate constant matrices $c_\pm$
(c.f.\,$M^\#$ from \cite[(4.60-4.64)]{DKMVZ}, which is just $\Theta$ rescaled by a constant diagonal matrix).   

While the solution of this RHP need not be unique (indeed solutions may differ by meromorphic functions with poles on $\mathbf{J}^*$), there is a unique solution so that 
\begin{equation} \label{M_global}
     M^\infty_N(q)=
     \begin{cases}
       \begin{pmatrix}
    \tfrac{\gamma + \gamma^{-1}}{2} \vartheta_1
    &
    \tfrac{\gamma - \gamma^{-1}}{-2i} \vartheta_2 \\
    \tfrac{\gamma - \gamma^{-1}}{2i} \vartheta_3
    &
    \tfrac{\gamma + \gamma^{-1}}{2} \vartheta_4 \\
  \end{pmatrix}   ,
  & \text{if } q\in \UH_+ \\
       \begin{pmatrix}
    \tfrac{\gamma + \gamma^{-1}}{2} \vartheta_1
    &
    \tfrac{\gamma - \gamma^{-1}}{-2i} \vartheta_2 \\
    \tfrac{\gamma - \gamma^{-1}}{2i} \vartheta_3
    &
    \tfrac{\gamma + \gamma^{-1}}{2} \vartheta_4 \\
  \end{pmatrix}  \sigma^{-1} ,
  & \text{if } q\in \UH_-  
     \end{cases} . 
  \end{equation}
  solves \eqref{eq:M_infty} with $\text{L}^2$--boundary values. 

  The entries $\vartheta_1, \dots , \vartheta_4$ are certain explicit ratios of  $\theta$-functions whose arguments depend on the parameters $N$ and  $\left\{ \Omega_j \right\}_1^m$. Moreover, by \cite[Lemma 4.1]{DKMVZ}, the poles of these $\vartheta_i$, $i \in \{1,2,3,4\},$ are located at the points $\{z_j\}_{j=1}^m$ and exactly cancel the zeroes of the functions $\gamma \pm \gamma^{-1}$. Thus, besides the jump conditions,  the entries of the matrix  $ M^\infty_N$ have at worst $1/4$-root singularities at the end-points of~$\mathbf{J}$; c.f.~formulae (4.73)-(4.73)  in \cite{DKMVZ}.  In particular, there exists a constant $C>0$ (independent  of the dimension $N$) so that
 \begin{equation} \label{M_estimate}
\left\|   \begin{pmatrix}
    \tfrac{\gamma + \gamma^{-1}}{2} \vartheta_1
    &
    \tfrac{\gamma - \gamma^{-1}}{-2i} \vartheta_2 \\
    \tfrac{\gamma - \gamma^{-1}}{2i} \vartheta_3
    &
    \tfrac{\gamma + \gamma^{-1}}{2} \vartheta_4 \\
  \end{pmatrix}  \right\| \le C \big( \mathrm{R}(q)+1\big)
\end{equation}
 uniformly over $\C\backslash \mathbf{J}^*$ with
 \begin{equation} \label{R_function}
 \mathrm{R}(q) :=\prod_{i=0}^m |(q-a_i)(q-b_i)|^{-1/4}.
 \end{equation}

An important consequence of this observation is the next powerful identities.
 \begin{lemma}
  All the matrices $Y_N$, $M_N$ and $M^\infty_N$ have determinant identically $1.$
  \label{lem:magicdeterminants}
\end{lemma}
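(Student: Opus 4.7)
The plan is to exploit the classical observation that if $\Phi$ solves a Riemann--Hilbert problem whose jump matrices all have determinant $1$ and whose normalization at infinity is of the form $D(q)(\I + O(q^{-1}))$ with $\det D \equiv 1$, then $\det \Phi$ is meromorphic on all of $\C$ and tends to $1$ at infinity, hence is identically $1$ by Liouville's theorem.

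I would first treat $Y_N$. The jump matrix in \eqref{RHP} is unipotent, so $\det Y_{N,+}(x) = \det Y_{N,-}(x)$ on $\R$ and $\det Y_N$ extends to an entire function. The prescribed behavior $Y_N(q) = \mathrm{diag}(q^N, q^{-N})(\I + O(q^{-1}))$ gives $\det Y_N(q) = 1 + O(q^{-1})$, so Liouville forces $\det Y_N \equiv 1$. The statement for $M_N$ is then immediate from \eqref{eq:pauli}: since $\sigma_3$ is traceless, both conjugating factors $e^{-N\ell_V \sigma_3/2}$ and $e^{-N(g(q)-\ell_V/2)\sigma_3}$ have determinant $1$, and therefore $\det M_N(q) = \det Y_N(q) = 1$ throughout $\C \setminus \R$.

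For $M^\infty_N$ the same template applies, but a short analysis of the candidate singularities is needed. Looking at \eqref{eq:M_infty}, the jump across $\mathbf{J}^*$ is the diagonal matrix $\nu^\infty_N = \mathrm{diag}(e^{iN\Omega_j}, e^{-iN\Omega_j})$ with determinant $1$, and the jump across $\mathbf{J}$ is the constant matrix $\sigma$, also of determinant $1$. Consequently $\det M^\infty_N$ extends analytically across the whole contour $[b_0, a_0]$ and defines a meromorphic function on $\C$ whose only possible singularities are the endpoints $\{a_i, b_i\}$ and the points $\{z_j\}_{j=1}^m$ where the $\theta$-functions $\vartheta_1,\ldots,\vartheta_4$ may develop poles. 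By construction (the \cite[Lemma 4.1]{DKMVZ} input recalled in the text), the $\vartheta_k$ poles at $z_j$ are cancelled exactly by the zeros of $\gamma \pm \gamma^{-1}$, so $M^\infty_N$, and thus its determinant, is already analytic at each $z_j$.

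At an endpoint $a_i$ or $b_i$, the bound \eqref{M_estimate} together with \eqref{R_function} shows that every entry of $M^\infty_N$ grows at worst like $(q-a_i)^{-1/4}$; so $\det M^\infty_N$ grows at worst like $(q-a_i)^{-1/2}$. However $\det M^\infty_N$ is single-valued in a punctured neighborhood of $a_i$ (no jump), and a pure $(q-a_i)^{-1/2}$ singularity is not single-valued, so the singularity must be removable. Hence $\det M^\infty_N$ extends to an entire function, and combined with $M^\infty_N(q) \to \I$ at infinity, Liouville forces $\det M^\infty_N \equiv 1$. The main (minor) obstacle is precisely this endpoint analysis: one must verify that both the $1/4$-root entry behavior and the would-be $\theta$-function poles disappear from the determinant, using single-valuedness together with the growth estimate \eqref{M_estimate}.
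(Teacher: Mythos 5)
Your proof is correct and follows essentially the same route as the paper: for $Y_N$ use the unipotent jump and normalization to see $\det Y_N$ is entire with value $1$ at infinity; pass to $M_N$ via \eqref{eq:pauli}; for $M^\infty_N$ observe the jump matrices have unit determinant so $\det M^\infty_N$ has no jump, then invoke \eqref{M_estimate} to make the endpoint singularities removable and conclude by Liouville. The only difference is that you spell out the removability step (single-valuedness plus sub-pole growth forces removability) and explicitly mention the $\theta$-function poles at $z_j$, which the paper absorbs into \eqref{M_estimate}; both are fine.
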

\begin{proof}
  For $Y_N,$ this can be seen from the defining Riemann-Hilbert problem, which shows that $\det Y_N$ extends to an entire function and tends to $1$ at infinity.  The result then holds for $\det M_N$ as well, by formula \eqref{eq:pauli}.
  Similarly, it follows from \eqref{eq:M_infty} that the function $q\mapsto \det M^\infty_N(q)$ has no jump in $\C$ and the estimate \eqref{M_estimate} implies that  all its plausible singularities at the endpoints of $\mathbf{J}$ are removable.  
  Thus, the function $ \det M^\infty_N(q)$  is also entire and tends to 1 at infinity.
\end{proof}

 Let us now give some precise statements about the asymptotics of  $M_N$, \eqref{eq:pauli}. In fact, since the matrices $U_N$ and $\nu_N$ are not asymptotic to $\I$ and $\nu_N^\infty$  uniformly on $\Sigma_{\pm}$ and $\mathbf{J}^*$ respectively (the problem coming from the edge-points), we need to introduce one more auxiliary RHP to get the full solution. Let  $\epsilon>0$ be a small parameter and 
 $\mathscr{E} = \bigcup_{\gamma \in \{ a_j, b_j\}} \D(\gamma, \epsilon) $.
We also let $\mathscr{C}_\epsilon = \{\mathbf{J}^* \cup   \Sigma_{\pm}\}  \backslash \mathscr{E} $ and define
\begin{equation} \label{eq:R_matrix}
 R_N(q) =
 \begin{cases}
   S_N(q) (P_N)^{-1} , & q\in \mathscr{E}  \\
  S_N(q) (M^\infty_N)^{-1} ,  &q\in \C \setminus \big\{  \R\cup \Sigma_{\pm}\cup\overline{\mathscr{E}} \big\}
  \end{cases} .
\end{equation}
In formula \eqref{eq:R_matrix},   $P_N$ is the so-called {\it edge-parametrix}, it is defined so that it has the same jumps as $S_N(q)$ inside $\mathscr{E}$ and it satisfies 
\begin{equation} \label{eq:BC} 
M^\infty_N(q)P_N(q)^{-1} = \I + \underset{N\to\infty}{O}(N^{-1}) 
\end{equation}
uniformly for all $q\in \partial\mathscr{E}$.
The boundary condition \eqref{eq:BC} does not uniquely determine the matrix $P_N$, but it determines its  leading asymptotic; c.f.~formulae \eqref{eq:P}--\eqref{Psi_asymptotics} below.
  Then, $ R_N$ satisfies the RHP:
\begin{equation}
  \begin{array}{ll}
     R_N \text{ is analytic in }\C \setminus \big\{ \mathscr{C}_\epsilon \cup  \partial\mathscr{E} \big\} , \\
   R_{N,+}(q) =R_{N,-}(q)E_N(q) , &q\in\mathscr{C}_\epsilon   \\
  R_{N,+}(q) =R_{N,-}(q) M^\infty_N(q)  P_N(q)^{-1} ,
 &q\in \partial\mathscr{E} \\
    R_N(q) \to  \I , &\text{as }q \to \infty 
  \end{array}
  \label{eq:R}
\end{equation}
where the jump matrix is given by
$$
E_N(q) =  
\begin{cases}
    M^\infty_N(q)\begin{pmatrix}
 1 & 0 \\  e^{\mp N G(q)}& 1  
\end{pmatrix}       M^\infty_N(q)^{-1}  ,  & q\in\Sigma_{\pm} \\
    M^\infty_{N,-}(q)\begin{pmatrix}
      1 &e^{iN\Omega_j-N H(q)}\\
     0 & 1 \\
    \end{pmatrix}       M^\infty_{N-}(q)^{-1}  ,  & q \in \mathbf{J}^*_\epsilon 
\end{cases} . 
$$
In particular, using the conditions \eqref{H_estimate} and \eqref{G_estimate}, an easy computation shows that there exists $\alpha_\epsilon>0$  so that
$$
E_N(q) = \I + \underset{N\to\infty}{O}\big( \| M_N^\infty \|^2 e^{-\alpha_\epsilon N}\big) , 
$$
uniformly for all $q\in \mathscr{C}_\epsilon$. Thus, all the jumps in the RHP \eqref{eq:R} converge uniformly to the identity and it follows from the general theory that
\begin{equation}\label{R_estimate}
R_N(q) = \I + \underset{N\to\infty}{O}(N^{-1}) , 
\end{equation}
uniformly for all $q$ in compact subsets of $\C\backslash\mathbf{J}^*$; c.f.~\cite[section~4.6]{DKMVZ}. In particular, note that the estimate \eqref{R_estimate} is valid everywhere except on $\overline{\mathbf{J}^*}$ because we are free to  deform slightly the jump contours $\Sigma_{\pm}$ and $\partial\mathscr{E}$. \\

At last, we need to say a few words about the   {\it edge-parametrix}. According to \cite[section~4.3]{DKMVZ}, it has the form for all  $\gamma \in \bigcup_{j=0}^m \{a_j, b_j\}$ and $q\in \D(\gamma, \epsilon)$,
\begin{equation} \label{eq:P} 
P_N(q) =    \Psi\big(\phi_{N,\gamma}(q)\big)M^\infty_N(q) , 
\end{equation}
where
$$
\Psi(\zeta) := \frac{1}{\sqrt{2}} \begin{pmatrix}
1 & - i \\ -i &1 
\end{pmatrix}  \zeta^{\sigma_3/4}  A(\zeta) e^{\frac{2}{3} \zeta^{3/2} \sigma_3} ,
$$
and $A$ is the (unique) solution  of the so-called {\it Airy Riemann-Hilbert problem}.
We refer to the lecture notes \cite[section~2.3]{Kuijlaars} for a nice description and an explicit solution of this problem in terms of the Airy function.
In particular, by inspection of $A$,  \cite[theorem~2.6]{Kuijlaars},  it is easy to verify that there exists a constant $C>0$ such that for all $\zeta \in \C$,
\begin{equation} \label{Psi_estimate}
\| \Psi (\zeta) \| \le C . 
\end{equation}
Moreover, by formula (2.60) therein, we have
\begin{equation} \label{Psi_asymptotics}
\Psi(\zeta) = \mathrm{I} + \underset{\zeta\to\infty}{O}(\zeta^{-3/2})
\end{equation}
in the region $-\pi <\arg \zeta<\pi$. 
The map $\phi_{N,\gamma}$ satisfies the relationship
\[
\phi_{N,\gamma}(q)=  \left( \frac{3N}{4}
  \int_{\gamma}^z\sqrt{\mathrm{Q}(s)}ds  \right)^{2/3} ;
\] 
c.f.~\cite[(4.85)]{DKMVZ} and \eqref{G_function} for the connection to the function $G$. 
Notice that in the regular case,
$\gamma$ is a simple zero of  the real-analytic function $\mathrm{Q}$ and the map 
$\phi_{N,\gamma}$ gives an analytic chart in the disk $\D(\gamma, \epsilon)$. 
In particular, the matrix $A$ is chosen so that $\Psi\big(\phi_{N,\gamma}(q)\big)$
has the same jumps as  $S_N$ inside $\D(\gamma, \epsilon)$; c.f.~ \cite[section~5.4]{Kuijlaars}.  Moreover, the asymptotics \eqref{Psi_asymptotics} guarantee that uniformly for all $|q-\gamma|=\epsilon$, 
\begin{equation}\label{P_asymptotics}
   \Psi\big(\phi_{N,\gamma}(q)\big) =  \mathrm{I} + \underset{N\to\infty}{O}(N^{-1}) . 
\end{equation}
Hence, by~\eqref{eq:P}, the matrix $P_N$ satisfies the boundary condition~\eqref{eq:BC}.

\begin{proposition}  \label{prop:M_estimate}
There exists a constant $C>0$ such that for all $q\in\C\backslash\R$, 
$$ \| M_N(q) \| \le C \big( \mathrm{R}(q)+1\big)  $$
where $\mathrm{R}(q)$ is given by formula \eqref{R_function}. 
\end{proposition}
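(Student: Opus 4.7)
The plan is to chain together the sequence of transformations developed in this section. By \eqref{eq:M_matrix} we have $M_N = S_N U_N$, and by \eqref{eq:R_matrix} we have $S_N = R_N M^\infty_N$ on $\C\setminus(\R\cup\overline{\mathscr{E}})$ while $S_N = R_N P_N$ on $\mathscr{E}$. Combining these with the factorization \eqref{eq:P} of the edge parametrix, I can write
\[
M_N(q) = \begin{cases} R_N(q)\, M^\infty_N(q)\, U_N(q), & q\in \C\setminus(\R\cup\overline{\mathscr{E}}),\\[2pt] R_N(q)\,\Psi(\phi_{N,\gamma}(q))\, M^\infty_N(q)\, U_N(q), & q\in \mathscr{E}\setminus\R, \end{cases}
\]
so the proposition reduces to bounding each factor uniformly; submultiplicativity of the sup-norm on $2\times 2$ matrices (up to a factor of $2$) then yields the claim.

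The ingredients for the individual bounds are all in place. First, by \eqref{R_estimate}, $\|R_N(q)\|\le C$ for $q\in\C\setminus\R$, uniformly in $N$. Second, by \eqref{Psi_estimate}, $\|\Psi(\zeta)\|\le C$ uniformly in $\zeta\in\C$. Third, by \eqref{M_global} together with \eqref{M_estimate}, $\|M^\infty_N(q)\|\le C(\mathrm{R}(q)+1)$ uniformly on $\C\setminus\mathbf{J}^*$ (in the lower half-plane the extra right-multiplication by the constant matrix $\sigma$ does not affect the sup-norm). Finally, $U_N(q)$ is identically $\I$ outside the lens-shaped regions, and on the lenses it has the form $\begin{pmatrix} 1 & 0 \\ e^{\mp N G_\pm(q)} & 1\end{pmatrix}$, whose off-diagonal entry has modulus $\le 1$ because the lens is opened inside the sector where $\pm\Re G_\pm(q)\ge 0$; in particular $\|U_N(q)\|\le 2$. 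Multiplying the bounds together in each of the two cases above gives $\|M_N(q)\|\le C'(\mathrm{R}(q)+1)$, as required.

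The only delicate point, and the step I would write most carefully, is the uniform bound on $U_N$ near the endpoints: the estimate \eqref{U_estimate} is stated only $\epsilon$-away from the endpoints, so inside each disk $\D(\gamma,\epsilon)\subset\mathscr{E}$ I need to verify that the lens opening can in fact be carried out so that $\pm\Re G_\pm \ge 0$ persists all the way into $\D(\gamma,\epsilon)$. This follows from the local expansion $G_\pm(q)\sim c(q-\gamma)^{3/2}$ at a simple edge, which identifies the Stokes sectors in which the sign condition holds; the lens contours $\Sigma_\pm$ are chosen to lie in those sectors. This is standard in the Deift-Zhou construction but is worth making explicit, since otherwise one would mistakenly conclude that $U_N$ blows up like $e^{CN\epsilon^{3/2}}$ inside $\mathscr{E}$, which would destroy the bound.
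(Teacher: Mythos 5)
Your proof is correct and follows essentially the same route as the paper: factor $M_N = R_N\,(M_N^\infty \text{ or } P_N)\,U_N$ via \eqref{eq:M_matrix}, \eqref{eq:R_matrix}, \eqref{eq:P}, bound $R_N$ uniformly by \eqref{R_estimate}, bound $\Psi$ by \eqref{Psi_estimate}, bound $M_N^\infty$ by \eqref{M_estimate}, and observe $\|U_N\|\le 1$ inside the lenses. Two small remarks: with the paper's entrywise sup-norm one gets $\|U_N(q)\|\le 1$ exactly (not just $\le 2$) since the diagonal entries are $1$ and the off-diagonal has modulus $\le 1$; and your caveat about verifying $\pm\Re G_\pm\ge 0$ all the way into the $\epsilon$-disks $\mathscr{E}$ (via the local $G_\pm(q)\sim c(q-\gamma)^{3/2}$ expansion and the choice of Stokes sectors for $\Sigma_\pm$) is a real point that the paper passes over silently by citing \eqref{G_estimate} -- which is stated only at bulk points -- so your explicit treatment of it is an improvement on the exposition, not a detour.
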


\begin{proof} Because of the estimate \eqref{M_estimate}, we only need to prove that there exists $C>0$ so that for all $q\in\C\backslash\R$,
$$ \| M_N(q) \| \le C \| M_N^\infty(q) \| .$$
By formulae \eqref{eq:M_matrix} and \eqref{eq:R_matrix}, we have
\begin{equation}   \label{eq:M}
M_N(q) = \begin{cases}
R_N(q) M_N^\infty(q) U_N(q) &\text{if } q\in \C \setminus \big\{  \R\cup \Sigma_{\pm}\cup\overline{\mathscr{E}} \big\} \\
R_N(q) P_N(q) U_N(q) &\text{if } q\in  \mathscr{E}  \\
\end{cases} . 
\end{equation}
So, it suffices to observe that, by \eqref{R_estimate}, the error term $R_N(q)$  is uniformly bounded in both $q$ and $N$ and, by \eqref{G_estimate} and formula \eqref{eq:U_matrix}, 
$\| U_N(q) \| \le 1$ for all $q\in \C\backslash\R$. 
\end{proof}

Finally, we can also state the  asymptotics of the matrix $M_N(q)$ in a neighborhood  in the upper-half plane (resp.~lower-half plane) of a point of the bulk.  

\begin{proposition} 
Let $0<\delta <1/2$ and $x \in \mathbf{J}$. 
Let $ \mathscr{K}_N^{\pm}$ be sequences of compact sets such that
$$
 \mathscr{K}_N^{\pm}  \subset \D(x, N^{-\delta} ) 
 \quad\text{ and }\quad
 \mathscr{K}_N^{\pm} \subset \big\{ \pm(\Im q)  > N^{-1+\delta} \big\} .
$$
Then   
\begin{equation} \label{uniform_continuity}
    M_N(q) = M^\infty_{N,\pm}(x) + \underset{N\to\infty}{O_x}(N^{-\delta}) ,
\end{equation}
  uniformly in  $ \mathscr{K}_N^{\pm}$, where $M^\infty_{N,\pm}$ denotes the boundary values of the  matrix \eqref{M_global} and the error term in formula \eqref{uniform_continuity} is uniform for all $x$ in compact subsets of $\mathbf{J}$.
  \label{prop:multicut}
\end{proposition}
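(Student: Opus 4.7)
The plan is to use the factorization (\ref{eq:M}), $M_N(q) = R_N(q)\, M_N^\infty(q)\, U_N(q)$ (valid outside the edge disks $\mathscr{E}$), and to control each of the three factors uniformly on $\mathscr{K}_N^{\pm}$. Since $x$ lies in the bulk strictly away from all endpoints $\{a_j,b_j\}$ and $\delta<1/2$, for $N$ large the disk $\D(x,N^{-\delta})$ is contained in a fixed $\epsilon$-neighborhood of $x$ that avoids $\mathscr{E}$; by deforming the lens contours $\Sigma_{\pm}$ away from this neighborhood, we may further arrange $\mathscr{K}_N^{\pm}$ to avoid $\Sigma_{\pm}$, so that $R_N$ is analytic at $q$.

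The three factors are then handled as follows. The bound (\ref{U_estimate}) combined with the hypothesis $|\Im q|\ge N^{-1+\delta}$ gives $\|U_N(q)-\mathrm{I}\|\le e^{-\mathrm{c}N^{\delta}}$, which is super-polynomially small. The estimate (\ref{R_estimate}) gives $R_N(q)=\mathrm{I}+O(N^{-1})$ uniformly in $q\in\mathscr{K}_N^{\pm}$. Finally, (\ref{M_estimate}) together with the fact that $\mathrm{R}(q)$ is bounded uniformly in a fixed neighborhood of $x$ (since $x$ is a positive distance from all endpoints) yields $\|M_N^\infty(q)\|\le C_x$ uniformly in $N$. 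Multiplying the three estimates, we obtain
\[
  M_N(q) \;=\; M_N^\infty(q) + \underset{N\to\infty}{O}(N^{-1}),
\]
uniformly in $q\in\mathscr{K}_N^{\pm}$, with constant depending only on $x$.

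It thus remains to show $M_N^\infty(q)=M^\infty_{N,\pm}(x)+O(N^{-\delta})$. The key observation is that, although $M_N^\infty$ has a jump across $\mathbf{J}$, this jump is by the constant matrix $\sigma$, so $M_N^\infty|_{\UH_+}$ extends analytically across $\mathbf{J}\cap \D(x,\epsilon)$ into $\UH_-$ via $\widetilde{M}_N^\infty(q):= M_N^\infty(q)\sigma$ for $q\in\UH_-\cap \D(x,\epsilon)$. For $\epsilon$ small enough that $\D(x,\epsilon)$ avoids $\overline{\mathbf{J}^*}$ and all endpoints, (\ref{M_estimate}) provides a uniform-in-$N$ bound $\|\widetilde{M}_N^\infty\|\le C_x$ on $\D(x,\epsilon)$. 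A standard Cauchy estimate then bounds $\|(\widetilde{M}_N^\infty)'\|$ uniformly in $N$ on $\D(x,\epsilon/2)$, and a first-order Taylor expansion at $x$ gives
\[
  \|M_N^\infty(q)-M^\infty_{N,+}(x)\|=\|\widetilde{M}_N^\infty(q)-\widetilde{M}_N^\infty(x)\|=O(|q-x|)=O(N^{-\delta})
\]
for $q\in\mathscr{K}_N^+$; the analogous argument from $\UH_-$ handles $\mathscr{K}_N^-$. Combining yields (\ref{uniform_continuity}).

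I expect the only real subtlety to be this last paragraph: one must confirm that the $N$-dependent theta-function factors $\vartheta_i$, whose arguments involve the parameters $N\Omega_j$, do not spoil the uniform-in-$N$ boundedness needed to invoke Cauchy's estimate. Fortunately, (\ref{M_estimate}) already encodes such a uniform bound throughout the bulk, so once the analytic continuation through the cut $\mathbf{J}$ is set up correctly, the derivative estimate is automatic.
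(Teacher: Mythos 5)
Your proof is correct and follows essentially the same route as the paper: factor $M_N = R_N M_N^\infty U_N$ away from the edge disks, use \eqref{U_estimate} and \eqref{R_estimate} to reduce to $M_N^\infty(q)$, then exploit the $N$-uniform bound \eqref{M_estimate} together with Cauchy estimates to get Lipschitz continuity of the analytic continuation $\widetilde M_N^\infty$ across $\mathbf{J}$. The paper packages that last step as Lemma~\ref{lem:uniform_continuity}, but its proof is exactly the Cauchy-formula argument you give.
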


Before proceeding to the proof of proposition~\ref{prop:multicut}, it is useful to mention the uniform continuity property of the matrix $M^\infty_{N}$:

\begin{lemma} \label{lem:uniform_continuity}
Let, for all $q\in \C\backslash \overline{\mathbf{J}^*}$, 
$$   \widetilde{M}^\infty_{N}(q) = 
  \begin{pmatrix}
    \tfrac{\gamma + \gamma^{-1}}{2} \vartheta_1
    &
    \tfrac{\gamma - \gamma^{-1}}{-2i} \vartheta_2 \\
    \tfrac{\gamma - \gamma^{-1}}{2i} \vartheta_3
    &
    \tfrac{\gamma + \gamma^{-1}}{2} \vartheta_4 \\
  \end{pmatrix} . 
$$
For any  $x\in \C\backslash \overline{\mathbf{J}^*}$, we have 
$$
 \widetilde{M}^\infty_{N}(q) = \widetilde{M}^\infty_{N}(x) + \underset{r\to0}{O_x}(r)
$$
uniformly for all $q\in \D(x, r)$. In particular, the error term is independent of $N$ and uniform for all $x$ in compact subsets of $\C\backslash \overline{\mathbf{J}^*}$. 
\end{lemma}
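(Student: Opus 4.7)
The plan is to reduce the statement to a standard Cauchy-derivative bound for a locally analytic function, using only two inputs from the discussion above: the analyticity of $\widetilde{M}^\infty_N$ on $\C\setminus\overline{\mathbf{J}^*}$ and the $N$-uniform estimate \eqref{M_estimate}.

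First, I would verify that $\widetilde{M}^\infty_N$ is genuinely analytic on $\C\setminus\overline{\mathbf{J}^*}$, with no jump across the support $\mathbf{J}$. Each factor $\gamma^{\pm 1}$ and each $\vartheta_i$ is analytic on $\C\setminus\overline{\mathbf{J}^*}$: the branch cut of $\gamma$ sits on $\mathbf{J}^*$ (so that $H$ in \eqref{H_matrix} realizes the RHP \eqref{eq:H}), and the $\vartheta_i$ have only the possible poles at the points $\{z_j\}_{j=1}^m\subset\mathbf{J}^*$. The apparent singularities of the entries at the endpoints of $\mathbf{J}$ and at the $z_j$'s are removed once we form the products $\tfrac{\gamma\pm\gamma^{-1}}{2\,\text{or}\,2i}\vartheta_k$, by the pole/zero matching recorded around \eqref{M_estimate}. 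Hence $\widetilde{M}^\infty_N$ is analytic throughout $\C\setminus\overline{\mathbf{J}^*}$.

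Next, fix a compact set $K\subset\C\setminus\overline{\mathbf{J}^*}$. Since $K$ and $\overline{\mathbf{J}^*}$ are disjoint closed sets and $K$ is compact, there exists $r_0=r_0(K)>0$ such that for every $x\in K$ the disk $\D(x,2r_0)$ is contained in $\C\setminus\overline{\mathbf{J}^*}$ and is uniformly bounded away from the endpoints of $\mathbf{J}$. On this neighborhood the factor $\mathrm{R}(q)$ defined by \eqref{R_function} is bounded by a constant $C_K$ depending only on $K$, so the estimate \eqref{M_estimate} yields the $N$-uniform bound
\[
\sup_{N}\sup_{x\in K}\sup_{|\zeta-x|\le r_0}\bigl\|\widetilde{M}^\infty_N(\zeta)\bigr\|\le C(C_K+1).
\]
This is the crucial uniform-in-$N$ step; it is free of charge because the only $N$-dependence in $\widetilde{M}^\infty_N$ enters through $\{N\Omega_j\}$ inside the theta functions, and \eqref{M_estimate} already absorbs that dependence.

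Finally, since $\widetilde{M}^\infty_N$ is analytic in $\D(x,2r_0)$, Cauchy's integral formula applied entrywise gives, for any $r\in(0,r_0)$ and $q\in\D(x,r)$,
\[
\widetilde{M}^\infty_N(q)-\widetilde{M}^\infty_N(x)=\frac{1}{2\pi i}\oint_{|\zeta-x|=r_0}\widetilde{M}^\infty_N(\zeta)\left(\frac{1}{\zeta-q}-\frac{1}{\zeta-x}\right)d\zeta,
\]
and the integrand is bounded in norm by $|q-x|/(r_0(r_0-r))$ times the uniform bound above. Taking $r\to 0$ produces the claimed $O_x(r)$ error, uniformly in $N$ and uniformly for $x$ in compact subsets of $\C\setminus\overline{\mathbf{J}^*}$. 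I do not expect any genuine obstacle: the only point requiring attention is the removability of the singularities across $\mathbf{J}$ and at the $z_j$'s, which has already been recorded in the discussion leading to \eqref{M_estimate}.
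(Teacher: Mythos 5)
Your proposal is correct and follows the paper's route exactly: analyticity of $\widetilde{M}^\infty_N$ on $\C\setminus\overline{\mathbf{J}^*}$ together with the $N$-uniform bound \eqref{M_estimate} feeds into a Cauchy-integral Lipschitz estimate on disks $\D(x,r_0)$ compactly contained in the domain. One small remark: the pole/zero cancellation you invoke to argue analyticity concerns points (endpoints of $\mathbf{J}$ and the $z_j$) that all lie in $\overline{\mathbf{J}^*}$, hence \emph{outside} the domain of analyticity; that cancellation is really only needed for the bound \eqref{M_estimate}, while analyticity of $\gamma^{\pm1}$ and $\vartheta_i$ on $\C\setminus\overline{\mathbf{J}^*}$ is immediate from the Riemann--Hilbert characterizations \eqref{eq:H} and \eqref{eq:Theta}.
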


\begin{proof} 
Recall that, by  \eqref{H_matrix} and  \eqref{eq:Theta},  the matrices $H$ and  $\Theta$ are analytic in $\C\setminus \overline{\mathbf{J}^*}$. 
So is the matrix $\widetilde{M}^\infty_{N}(q)$ and Lemma~\ref{lem:uniform_continuity} follows directly from Cauchy's formula and the estimate \eqref{M_estimate}.
\end{proof}

\begin{proof}[Proof of Proposition~\ref{prop:multicut}] When the parameter $N$ is large, by formula \eqref{eq:M}, we have 
$M_N(q) = R_N(q) M_N^\infty(q) U_N(q) $
for all $q\in \mathscr{K}_N^\pm$. 
By \eqref{U_estimate}, if $|\Im q| \ge N^{-1+\delta}  $,  then
$$ U_N(q) = \I + \underset{N\to\infty}{O}\big (e^{- \mathrm{c} N^\delta} \big) .$$ 
Moreover, using the estimates \eqref{R_estimate} and \eqref{M_estimate}, this implies that 
\begin{equation} \label{M_asymptotics}
M_N(q) = M_N^\infty(q) +  \underset{N\to\infty}{O_x}(N^{-1}) ,
\end{equation}
uniformly for all $q \in \mathscr{K}_N^\pm$. 
Notice that, by formula \eqref{M_global}, $M_N^\infty(q) =  \widetilde{M}^\infty_{N}(q)$ for all $q\in \UH_+$. Thus, by assumption, Lemma~\ref{lem:uniform_continuity} implies that
$$
M^\infty_{N}(q) = \widetilde{M}^\infty_{N}(x) + \underset{N\to \infty}{O_x}(N^{-\delta}) ,
$$
uniformly for all $q \in \mathscr{K}_N^+$.
Combined this estimate with formula \eqref{M_asymptotics} and replacing 
 $ \widetilde{M}^\infty_{N}(x) =  M^\infty_{N,+}(x)$, this yields formula~\eqref{uniform_continuity}. 
 To get the estimate for $\mathscr{K}_N^-$, we follow the same argument  except that we must use that $M_N^\infty(q) =  \widetilde{M}^\infty_{N}(q) \sigma$ for all $q\in \UH_-$ and 
 that $ \widetilde{M}^\infty_{N}(x) \sigma =  M^\infty_{N,-}(x)$ for all $x\in \mathbf{J}$. 
\end{proof}

To conclude this section we give an application of Proposition~\ref{prop:M_estimate} to estimate the Laplace transform of the random field $\EF$, \eqref{eq:EF}. 

\begin{corollary}
  There is a constant $C>0$ so that for all $q \in \C\backslash\R$,
  \[
    \Exp e^{\pm2\EF(q)} \leq C\frac{(1+|\Im q|)\mathrm{R}(q)^2}{|\Im q|} ,
  \]
where $\mathrm{R}(q)$ is given by formula \eqref{R_function}.  
 \label{prop:expbound}
\end{corollary}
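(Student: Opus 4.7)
My plan is to split $\E e^{\pm 2\EF(q)} = \E|\det(q-A_N)|^{\pm 2}\cdot e^{\mp 2\E\log|\det(q-A_N)|}$, bound the moment factor using the Fyodorov--Strahov formula \eqref{Fyodorov_Strahov} together with Proposition~\ref{prop:M_estimate}, and separately control the deterministic mean-log factor using OP asymptotics for the one-point function.

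For the moment factor, I will apply \eqref{Fyodorov_Strahov} in the balanced configurations $(\ell,k)=(2,0)$ and $(0,2)$ with arguments $(q,\bar q)$, then use the three-term recurrence \eqref{OP_recurrence} together with the identities $\pi_n(\bar q)=\overline{\pi_n(q)}$ and $h_n(\bar q)=-\overline{h_n(q)}$ to reduce the resulting $2\times 2$ determinants to Wronskian-type expressions involving only $(\pi_N,\pi_{N-1})$ or $(h_N,h_{N-1})$; for instance, the positive moment becomes $\E|\det(q-A_N)|^2 = |\pi_N(q)|^2 + (\gamma_{N-1}/\gamma_N)^2\,\Im[\pi_N(q)\overline{\pi_{N-1}(q)}]/\Im q$. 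Rewriting these OP quantities as entries of $M_N$ via \eqref{eq:pauli}, invoking Proposition~\ref{prop:M_estimate}'s bound $\|M_N(q)\|\le C(\mathrm{R}(q)+1)$, and using the classical norming asymptotic $\gamma_N^2\asymp e^{-N\ell_V}$ to absorb the $\gamma_{N-1}$-powers, I expect to obtain
\[
\E|\det(q-A_N)|^{\pm 2}\le C(\mathrm{R}(q)+1)^2\,\frac{1+|\Im q|}{|\Im q|}\,e^{\pm 2N\Re g(q)}.
\]

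To convert this into the stated corollary, I need the deterministic estimate $|\E\log|\det(q-A_N)|-N\Re g(q)|=O(1)$ uniformly. Writing $\rho_N^{(1)}(x) = K_N(x,x)e^{-NV(x)}$ for the one-point function, this difference equals $\int\log|q-x|[\rho_N^{(1)}(x)-N\LSD(x)]\,dx$, and the finer DKMVZ asymptotics give that the integrand's bracket is a mean-zero, bounded, oscillatory correction in the bulk whose antiderivative is of order $O(1/N)$; an integration by parts then yields the required $O(1)$ bound uniformly in $q$.

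The main obstacle is this last step. A naive Jensen--Cauchy--Schwarz bound $e^{-2\E\log|\det|}\le\E|\det|^{-2}$, combined with the moment bound above, only gives the strictly weaker estimate $\E e^{\pm 2\EF(q)}\le C(\mathrm{R}(q)+1)^4((1+|\Im q|)/|\Im q|)^2$, off by a factor of $(\mathrm{R}(q)+1)^2(1+|\Im q|)/|\Im q|$; in a log-normal analogy with variance $\sigma^2\asymp\log(1/|\Im q|)$, this reflects the spurious factor $e^{2\sigma^2}\asymp 1/|\Im q|$. Closing the gap requires the oscillatory cancellation in the subleading correction to $K_N(x,x)e^{-NV(x)}$, which is finer OP input than the operator-norm bound provided by Proposition~\ref{prop:M_estimate}.
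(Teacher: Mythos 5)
Your analysis of the moment factor is the same as the paper's: apply Fyodorov--Strahov~\eqref{Fyodorov_Strahov} at the pair $(q,\bar q)$, use the three-term recurrence~\eqref{OP_recurrence} (together with $\pi_n(\bar q)=\overline{\pi_n(q)}$, $h_n(\bar q)=-\overline{h_n(q)}$) to rewrite the $2\times 2$ determinant in terms of the $M_N$ entries, control these via Proposition~\ref{prop:M_estimate}, and absorb the $\gamma_N$-ratio by the norming asymptotic $|e^{N\ell_V}/(2\pi i\gamma_N^2)|\leq C$ from \cite[(1.63)]{DKMVZ}. Your intermediate identity $\E|\det(q-A_N)|^2=|\pi_N(q)|^2+(\gamma_{N-1}/\gamma_N)^2\,\Im[\pi_N(q)\overline{\pi_{N-1}(q)}]/\Im q$ is correct.

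The ``main obstacle'' you flag, however, does not arise in the paper's proof, because the paper does not use the centering by $\Exp\log|\det(q-A_N)|$ in this corollary. It works with the field $\EF(q)=\log|\det(q-A_N)|-N\Re g(q)$ (the definition given in the Introduction as the log-potential of $N\LSD-\ESD$; note $\Re g(q)=\int\log|q-u|\,\LSD(du)$). With that choice the first line of the paper's argument,
\[
\Exp\bigl[e^{2\EF(q)}\bigr]=\E\bigl[|\det(q-A_N)|^2\bigr]\,e^{-2N\Re g(q)},
\]
is an exact algebraic identity, not an estimate, and the whole proof reduces to the moment-factor bound you already have. The re-definition in~\eqref{eq:EF} differs by a deterministic term, but the corollary as actually proved and used concerns the $N\Re g$-centered field. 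You are right that if one insists on the $\Exp\log$-centered field, one additionally needs $\sup_{q}\bigl|\Exp\log|\det(q-A_N)|-N\Re g(q)\bigr|=O(1)$, which is the constant term in the CLT for the linear statistic $\log|q-\cdot|$ and requires a finer cancellation (the oscillatory correction to the one-point function, as you sketch) than what the operator-norm estimate on $M_N$ alone supplies. In short: the extra step you identified is genuine but avoidable, and the paper avoids it by its choice of centering; your proof of the moment factor otherwise matches.
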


\proof Using the Fyodorov-Strahov formula, \eqref{Fyodorov_Strahov}, we have
\begin{align*}
    \Exp [e^{2 \EF(q)}  ]
    &=  \E\big[| \det(q- A_N)|^2 \big] e^{-2N \Re g(q)}  \\
    &=  \frac{1}{ \overline{q} -q} 
 \begin{vmatrix}  
 {\pi}_{N}(q)   & \pi_{N+1}(q) \\
 {\pi}_{N}(\overline{q})   & \pi_{N+1}(\overline{q}) 
 \end{vmatrix}  e^{-N \{ g(q) + g(\overline{q})\} } . 
\end{align*}
\noindent Using the recurrence relation \eqref{OP_recurrence}, we can express this determinant in terms of the entries of the matrix $M_N(q)$, \eqref{eq:pauli}, we obtain
\[
    \Exp[ e^{2 \EF(q)}  ]
    = \big| M_N(q)_{11} \big|^2   -  \frac{e^{N\ell}}{2\pi i \gamma_{N}^2 (\overline{q} -q)}
       \begin{vmatrix}  
M_{N}(p)_{21}   & M_{N}(p)_{11} \\
M_{N}(\overline{p})_{21}   & M_{N}(\overline{p})_{11} 
 \end{vmatrix} .
\]

The asymptotics \cite[(1.63)]{DKMVZ} imply that there exists a universal constant $C>0$ such that
\[
 \left| \frac{e^{N\ell}}{2\pi i \gamma_{N}^2}\right| \le C ,
\]
This implies that 
\[
    \Exp[e^{2 \EF(q)}]  \ll \frac{\|M_N(q)\|^2(1+|\Im q|)}{|\Im q|} 
\]
and the claim is a direct consequence of Proposition~\ref{prop:M_estimate}.

The claim for the negative power is similar, as 
\begin{align*}
    \Exp [e^{-2 \EF(q)}  ]
    &=  \E\big[| \det(q- A_N)|^{-2} \big] e^{2N \Re g(q)}  \\
    &=  \frac{1}{ \overline{q} -q} 
 \begin{vmatrix}  
 {h}_{N}(q)   & h_{N+1}(q) \\
 {h}_{N}(\overline{q})   & h_{N+1}(\overline{q}) 
 \end{vmatrix}  e^{N \{ g(q) + g(\overline{q})\} } . 
\end{align*}
\noindent Proceeding in an analogous way as to the positive power, we are led to the conclusion of the corollary. 

\qed\\

\section{Uniform mesoscopic OP asymptotics}
\label{sec:opmeso}

This section is devoted to compute the expectations of balanced ratios of characteristic polynomials, in particular the verification of the assumptions~\ref{ass:wusa} for the field $\EF$, \eqref{eq:EF}. Recall that $J:\D\to\C\backslash[-1,1]$ denotes the Joukowsky transform given by \eqref{eq:Joukowsky}, and we assume that the support of the equilibrium measure  $\mathbf{J} \subseteq [-1/2,1/2]$. 
We will compare locally the field $\ZF = \EF \circ J$ which is almost surely harmonic in the half-disk $\D_\pm$  to the Gaussian field $\GF$, \eqref{covariance_GF}.
Let $x\in\mathbf{J}$ and $\omega\in\partial\D_+$ be the pre-image of $x$  under $J$,  then we define for any $0<\delta<1/2$,
\begin{equation} \label{eq:domain}
  \mathcal{D}_{N,\delta}
  =\left\{ 
    re^{i\theta}\omega
    \ |\ 1 - N^{-\delta} \leq r \leq 1 - N^{-1+\delta},
    |\theta| \leq N^{-\delta}
  \right\}.
\end{equation}

We slightly generalize the setup from Assumption~\ref{ass:wusa} here, as the proof gives slightly more than what is required for Theorem~\ref{thm:subharmoniclb}.
For any $k \in \N$ and $\epsilon > 0$, let $\mathfrak{S}_{k,\epsilon,\delta}$ be the collection of cylinder functions from $\{\D \to \R\} \to \R$ of the form
\[
  \F \mapsto  
  \sum_{z \in \mathbf{z}'} 2\F(z)
  -\sum_{w \in \mathbf{w}'} 2\F(w),
\]
where $\mathbf{z}'$ and $\mathbf{w}'$ are disjoint subsets of $\mathcal{D}_{N,\delta}$ with $|\mathbf{z}'|=|\mathbf{w}'|=k$ and such that 
\begin{equation} \label{condition_1}
\inf\big\{ \dH(u,v) :  u, v\in \mathbf{z}' \cup \mathbf{w}', u\neq v \big\} \ge  \epsilon . 
 \end{equation}
 We now define a family of perturbed biases. Namely, let $\Bias' \in \mathfrak{S}_{\ell,\epsilon,\delta} $  and define $\mathfrak{W}_{\ell,\delta}(\Bias')$ as the set of all cylinder functions
\[
  \F \mapsto \Bias'(\F) + 
  \sum_{z \in \mathbf{z}} 2\F(z)
  -\sum_{w \in \mathbf{w}} 2\F(w),
\]
where $\mathbf{z}$ and $\mathbf{w}$ are disjoint
    subsets of $\mathcal{D}_{N,\delta}$ with  $|\mathbf{z}|=|\mathbf{w}|=\ell$
and the property that:
    if we denote $Z =  \mathbf{z} \cup \mathbf{z}' $ and $W=\mathbf{w} \cup \mathbf{w}' $,
    there is a bijection $\phi : \mathbf{z} \to \mathbf{w}$  so that for all $z\in \mathbf{z}$:
    \begin{equation} \label{condition_2}
      \dH(z,\phi(z))\leq 
       \min\left\{  
      \min_{z' \in Z\setminus \{z\}} 
      \dH(z,z') ;
      \min_{w \in W \setminus \{\phi(z)\}} 
      \dH(w,\phi(z)) \right\}.
    \end{equation}

We are now ready to state the main result of this section:

\begin{proposition}
  For all $k,\ell \geq 0$, all $\epsilon > 0$, and all $\delta > 0$ sufficiently small, we have
  \[
    \Exp\left[ e^{\Bias{\ZF}} \right]
    =
    \Exp\left[ e^{\Bias{\GF}} \right]
    \left(1+ \underset{N\to\infty}{O}(N^{-\delta})\right) ,
  \]
  uniformly in $\Bias' \in \mathfrak{S}_{k,\epsilon,\delta}$ and uniformly in $\Bias \in \mathfrak{W}_{\ell,\delta}(\Bias')$. 
  \label{prop:ZG}
\end{proposition}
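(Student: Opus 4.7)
The plan is to compute both exponential moments explicitly via the Fyodorov-Strahov formula and then match them term by term. Write $Z = \mathbf{z}\cup\mathbf{z}'$ and $W = \mathbf{w}\cup\mathbf{w}'$, so that $|Z|=|W|=k+\ell$. Since $A_N$ is Hermitian, $|\det(q-A_N)|^2 = \det(q-A_N)\det(\bar q-A_N)$, and the explicit form of $\EF$ in \eqref{eq:EF} gives
\[
  e^{\Bias{\ZF}}
  = e^{-2N\sum_{z\in Z}\Re g(J(z)) + 2N\sum_{w\in W}\Re g(J(w))}
    \cdot \frac{\prod_{z\in Z}\det(J(z)-A_N)\det(\overline{J(z)}-A_N)}{\prod_{w\in W}\det(J(w)-A_N)\det(\overline{J(w)}-A_N)}.
\]
This is a balanced ratio, so \eqref{Fyodorov_Strahov} expresses its expectation as a determinant of size $2(k+\ell)$ whose rows are $\pi_n$ or $h_n$ for consecutive $n$ near $N$, divided by the Vandermonde factors $\Delta$.

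Step 1: reduce to $M_N$. Applying the three-term recurrence \eqref{OP_recurrence} iteratively to each row reduces the Fyodorov-Strahov determinant to one whose entries involve only $\pi_N, \pi_{N-1}, h_N, h_{N-1}$. Via \eqref{Y_matrix} and \eqref{eq:pauli}, these become entries of $M_N$ multiplied by $e^{\pm N g(q)}$ factors, which combine to cancel the deterministic prefactor coming from the centering in $\EF$. What remains is an expression in terms of entries of $M_N$, the normalizing constants $\gamma_{N-j}$, and Vandermonde factors $\Delta$.

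Step 2: extract the leading asymptotic. Apply the uniform approximation $M_N(q) = M^\infty_{N,\pm}(x) + O(N^{-\delta})$ from Proposition~\ref{prop:multicut} to each entry of the reduced determinant, with the sign determined by the half-plane of $q$. The leading order (with $M^\infty_N$ substituted for $M_N$) combines with the Vandermonde denominators and the explicit formulas \eqref{H_matrix}, \eqref{M_global} for $M^\infty_N$ to yield a Cauchy-type determinant in the variables $J(z), J(\bar z), J(w), J(\bar w)$. Evaluating this explicitly and pulling back through the Joukowsky map, which converts the Cauchy kernel on $\C \setminus [-1,1]$ into the hyperbolic kernel $|1 - u\bar v|^{-1}$ on $\D$, produces
\[
  \Exp e^{\Bias{\GF}}
  = \prod_{u,v\in Z}|1-u\bar v|^{-1}\prod_{u,v\in W}|1-u\bar v|^{-1}\prod_{u\in Z, v\in W}|1-u\bar v|^{2},
\]
which is exactly the Gaussian exponential moment read off from the covariance \eqref{covariance_GF}.

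The main obstacle is establishing the $O(N^{-\delta})$ uniformity of this comparison when some $u\in \mathbf{z}$ and $v=\phi(u)\in \mathbf{w}$ are close. In such configurations, both the Vandermonde factor $\Delta$ and the near-singular determinant become small, so substituting $M_N(q)$ by its constant pointwise limit entrywise would produce an error comparable to the main term. The pairing condition \eqref{condition_2} is designed precisely for this: nearby points are pre-paired via $\phi$, and one can subtract paired rows of the Fyodorov-Strahov matrix to replace them by divided differences of $M_N$. The uniform continuity of $\widetilde M^\infty_N$ from Lemma~\ref{lem:uniform_continuity}, combined with Proposition~\ref{prop:multicut}, then bounds each divided difference at the correct order. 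The combinatorial argument showing that these row operations generate the required cancellations at every level of the Laplace expansion of the determinant is the content of Section~\ref{sec:matching}.
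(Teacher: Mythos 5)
Your high-level skeleton — reduce $\Exp e^{\Bias(\ZF)}$ to a balanced ratio, apply Fyodorov--Strahov and the three-term recurrence to land on entries of $M_N$, then substitute the pointwise limit $M^\infty_{N,\pm}$ via Proposition~\ref{prop:multicut} — matches the paper's route (cf.\ Lemma~\ref{lem:FSbalanced} and~\eqref{eq:scaleddet}). The genuine gap is in how you propose to secure uniformity of the $O(N^{-\delta})$ error when some $z\in\mathbf{z}$ and its partner $\phi(z)\in\mathbf{w}$ are close. You propose subtracting paired rows of the Fyodorov--Strahov determinant to form divided differences of $M_N$, and you attribute the requisite combinatorics of such row operations to Section~\ref{sec:matching}. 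Neither is right. The two rows you would pair are of different type: a $z\in\mathbf{z}$ (numerator) gives a $\pi$-row, i.e.\ the entries $(M_N)_{21},(M_N)_{11}$, while $\phi(z)\in\mathbf{w}$ (denominator) gives an $h$-row, i.e.\ $(M_N)_{22},(M_N)_{12}$; subtracting them does not produce a small quantity as the two points coalesce, because you are not differencing a single analytic function. Even setting that aside, $M_N - M^\infty_{N,\pm}$ is only $O(N^{-\delta})$ \emph{pointwise} in $\mathcal{D}_{N,\delta}$, where $\Im q$ can be as small as $N^{-1+\delta}$; a divided difference over such a scale would, by Cauchy's estimate, inflate the error by a factor of order $N^{1-\delta}$, annihilating the gain.

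What the paper actually does is avoid divided differences entirely. It Laplace-expands the determinant by diagonals (Lemma~\ref{thm:error}) into a finite sum over subsets $S,T$; each term factors as a ratio of Vandermonde products $\Delta(\q_S,\p_T)\Delta(\q_{S^*},\p_{T^*})/\bigl(\Delta(\q)\Delta(\p)\bigr)$ times a product of individual $M_N$ entries. All the dangerous near-cancellation from close points is packaged into the Vandermonde ratio, and Section~\ref{sec:matching} (Proposition~\ref{prop:bound}) proves the uniform bound that this ratio is $\le C\,\Exp[e^{\Bias(\GF)}]$ for \emph{every} $(S,T)$ — a statement about products of hyperbolic distances, not about row operations. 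Given that bound, replacing each $M_N$ entry by $M^\infty_{N,\pm}(x)$ costs a multiplicative $(1+O(N^{-\delta}))$ per term; the sum is reassembled (Lemma~\ref{thm:error} backward), factored via a Kronecker product so that $\det M^\infty_N = 1$ (Lemma~\ref{lem:magicdeterminants}) eliminates the $M^\infty_N$ contribution, and the remaining Vandermonde ratio is matched to $\Exp[e^{\Bias(\GF)}]$ via Lemma~\ref{thm:Laplace_G}. Without an argument of this type — or a substitute that actually handles near-confluent points of opposite character — your Step 2 does not close.
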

As $\ZF$ is harmonic in $\D,$ this proposition and Theorem~\ref{thm:subharmoniclb} (after rotating the field $\ZF$ to move $\omega$ to $i$) implies the lower bound in Theorem~\ref{thm:main}.

The first step in the proof of proposition~\ref{prop:ZG} is a reduction of the Fyodorov-Strahov formula, \eqref{Fyodorov_Strahov},  in the case that $k=\ell$.
Namely, we show that we can express the expected value of balanced ratio of characteristic polynomials only in terms of the matrix $Y_N$, \eqref{Y_matrix}, whose asymptotics we presented in section~\ref{sec:opbackground}.
 For notational convenience, if $\q=(q_1 \dots, q_\ell)\in (\C\backslash\R)^\ell$  and $f : \C\backslash\R \to \C$ is a continuous function, we let
$$
f(\q) =  \diag( f(q_1), \dots,f(q_\ell)). 
$$
Recall also that $\V(\q)$ and $\Delta(\q)$ denote the Vandermonde matrix and determinant of the points $(q_1 \dots, q_\ell)$; c.f.~formulae \eqref{V_matrix} and \eqref{V_det} respectively. In particular, if $\q= (\mathbf{u}, \mathbf{v})$, we will also use the notation:
$$
\V(\q) = \V(\mathbf{u},\mathbf{v})
\quad\text{ and }\quad
\Delta(\q)= \Delta(\mathbf{u},\mathbf{v}) . 
$$

\begin{lemma} 
Let $\ell \geq 1,$ $\p=(p_1 \dots, p_\ell)$ and $\q=(q_1 \dots, q_\ell)$ be two tuples of  distinct points in $\C\backslash\R$. We have for all $N \geq \ell,$
  \[
    \E\left[  \frac{  \prod_{i=1}^{\ell} \det(p_i-A_N)}{ \prod_{j=1}^{\ell}  \det(q_{j}-A_N)} \right] 
    = \frac{1}{ \Delta(\q)\Delta(\p)}
    \begin{vmatrix}
    \tilde{h}_{N}(\q)  \V(\q) &  h_N(\q)  \V(\q) \\
     \tilde{\pi}_{N}(\q) \V(\p) &    \pi_N(\q)  \V(\p) \\
    \end{vmatrix} .
  \]
  \label{lem:FSbalanced}
\end{lemma}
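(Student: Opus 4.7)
The approach is to apply the Fyodorov-Strahov formula \eqref{Fyodorov_Strahov} with $k = \ell$ and then use the three-term recurrence \eqref{OP_recurrence} to collapse the resulting $2\ell \times 2\ell$ determinant (whose entries involve $\pi_n$ and $h_n$ for $N-\ell \leq n \leq N+\ell-1$) to one involving only $\pi_N, \pi_{N-1}, h_N, h_{N-1}$. Since $\pi_n$ and $h_n$ obey the same recurrence, a single sequence of column operations simultaneously simplifies both the top ($h$) and bottom ($\pi$) halves of the matrix. Applying \eqref{OP_recurrence} forward for $j \geq 0$ and its inverse $\pi_{n-1}(x) = (\gamma_n/\gamma_{n-1})^2[(x - \beta_n)\pi_n(x) - \pi_{n+1}(x)]$ for $j < 0$, each $\pi_{N+j}$ admits a unique expansion $\pi_{N+j}(p) = U_j(p)\pi_N(p) + V_j(p)\pi_{N-1}(p)$ for $-\ell \leq j \leq \ell - 1$, where $U_j, V_j$ are explicit polynomials in $p$ built from the recurrence coefficients.

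The key algebraic point is that the span of $\{\pi_{N-\ell},\dots,\pi_{N+\ell-1}\}$ coincides with the span of the $2\ell$ vectors $\{p^r\pi_{N-1}, p^r\pi_N : 0 \leq r \leq \ell-1\}$, which are linearly independent because $\pi_{N-1}$ and $\pi_N$ are coprime (their zeros strictly interlace). Let $C$ denote the corresponding change-of-basis matrix and let $M_{\mathrm{old}}$ denote the $2\ell\times 2\ell$ matrix from the Fyodorov-Strahov formula. The column operations then give $\det M_{\mathrm{old}} = (\det C)^{-1}\det M_{\mathrm{new}}$, where
\[
M_{\mathrm{new}} = \begin{pmatrix} h_{N-1}(\q)\V(\q) & h_N(\q)\V(\q) \\ \pi_{N-1}(\p)\V(\p) & \pi_N(\p)\V(\p) \end{pmatrix},
\]
since the $(i,r+1)$-entry of $\pi_{N-1}(\p)\V(\p) = \diag(\pi_{N-1}(p_i))\V(\p)$ is precisely $p_i^r\pi_{N-1}(p_i)$, and similarly for the other three blocks.

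The main technical step is the computation of $\det C$. I would reorder the rows of $C^{-1}$ as $(\pi_{N-1}, \pi_N, p\pi_{N-1}, p\pi_N, \ldots, p^{\ell-1}\pi_{N-1}, p^{\ell-1}\pi_N)$ and the columns as $(\pi_{N-1}, \pi_N, \pi_{N-2}, \pi_{N+1}, \pi_{N-3}, \pi_{N+2}, \ldots)$, i.e.\,interleaving outward from the middle. In this basis the explicit form of $U_j, V_j$ and their backward analogues $\tilde U_j, \tilde V_j$ makes the reordered matrix \emph{lower-triangular}: the column $\pi_{N+k-1}$ (resp.\ $\pi_{N-k}$) meets the diagonal at the row $p^{k-1}\pi_N$ (resp.\ $p^{k-1}\pi_{N-1}$) with value $1$ by monicity of $U_{k-1}$, resp.\ $\prod_{i=1}^{k-1}(\gamma_{N-i}/\gamma_{N-i-1})^2$, which is the leading coefficient of $\tilde V_k$ obtained by a short induction on $k$ via the backward recurrence. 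A routine inversion count for the two reordering permutations contributes a net sign $(-1)^{\binom{\ell}{2}}$, so that
\[
\det C = \frac{(-1)^{\binom{\ell}{2}}\prod_{j=2}^{\ell}\gamma_{N-j}^{2}}{\gamma_{N-1}^{2(\ell-1)}}.
\]
Finally, the identities $\tilde h_N(\q) = -2\pi i\gamma_{N-1}^2 h_{N-1}(\q)$ and $\tilde\pi_N(\p) = -2\pi i\gamma_{N-1}^2 \pi_{N-1}(\p)$ from \eqref{Y_matrix} imply that the determinant in the statement equals $(-2\pi i\gamma_{N-1}^2)^\ell \det M_{\mathrm{new}}$; substituting into \eqref{Fyodorov_Strahov} and combining the prefactors with $\det C$, all constants cancel precisely to yield the claimed identity. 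The only genuinely delicate point is the sign tracking in $\det C$, where the $(-1)^{\binom{\ell}{2}}$ from the permutations must cancel the $(-1)^{\binom{\ell}{2}}$ factor already appearing in the Fyodorov-Strahov prefactor.
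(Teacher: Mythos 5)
Your proposal is correct and follows essentially the same route as the paper: apply the Fyodorov--Strahov formula in the balanced $k=\ell$ case, use the shared three-term recurrence of $\pi_n$ and $h_n$ to perform a simultaneous change of columns reducing to $\pi_{N-1},\pi_N,h_{N-1},h_N$, track the resulting $\gamma$-factors and sign of the basis change, and then absorb $(-2\pi i\gamma_{N-1}^2)^\ell$ into $\tilde\pi_N,\tilde h_N$. The paper makes the triangularity manifest by using the intermediate column ordering $A^{\ell-1}v_{N-1},\dots,v_{N-1},v_N,\dots,A^{\ell-1}v_N$ (so the $\gamma$-factors appear directly as column scalings) and then reversing the first $\ell$ columns to get the $(-1)^{\binom{\ell}{2}}$, whereas you work with the interleaved ordering and an explicit change-of-basis determinant, but the two are the same computation presented differently.
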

\begin{proof}
For any $n\in \N$, let
$$
v_N = \big(h_n(q_1),\dots , h_n(q_\ell), \pi_n(p_1),\dots, \pi_n(p_\ell)\big)^t . 
$$

  We begin with  the Fyodorov-Strahov formula, , \eqref{Fyodorov_Strahov},   which we 
  write as
  \[
    \E\left[  \frac{  \prod_{i=1}^{\ell} \det(p_i-A_N)}{ \prod_{j=1}^{\ell}  \det(q_{j}-A_N)} \right] 
    = \frac{\prod_{j=1}^\ell(- 2\pi i \gamma_{N-j}^2)}{ {(-1)^{\binom{\ell}{2}}}   \Delta(\q)\Delta(\p)}
\big| v_{N-\ell}, \dots, v_{N+\ell-1} \big|
  \]
  Since $\pi_n$ and $h_n$ satisfy the same $3$-term recurrence relation, \eqref{OP_recurrence}, we get
  \[
    v_{n+1} + \left(\frac{\gamma_{n-1}}{\gamma_{n}}\right)^2  v_{n-1}  = (A- \beta_{n}I)  v_n,
  \]
  where $A = \diag(q_1,\dots, q_\ell,p_1, \dots, p_\ell).$
  Hence, by induction,  we can conclude that for each $j \geq 1$
  \begin{align*}
    &v_{N+j} - A^jv_{N}  \in \Span\{ v_{N-1}, v_N, Av_N, \dots, A^{j-1}v_N\} \\
    &v_{N-j} - \left(\frac{\gamma_{N-1}}{\gamma_{N-j}}\right)^2A^{j-1}v_{N-1}  \in \Span\{ v_{N}, v_{N-1}, Av_{N-1}, \dots, A^{j-2}v_{N-1}\}.
  \end{align*}
  Therefore, applying column operations to the determinant, we obtain
  \[
    |v_{N-\ell}, \dots, v_{N+\ell-1}|
    =
\prod_{j=1}^\ell  \left(\frac{\gamma_{N-1}}{\gamma_{N-j}}\right)^2
    |A^{\ell-1}v_{N-1}, \dots, v_{N-1}, v_N, Av_{N}, \dots, A^{\ell-1}v_{N}|
  \]
  and 
  \[
    \E\left[  \frac{  \prod_{i=1}^{\ell} \det(p_i-A_N)}{ \prod_{j=1}^{\ell}  \det(q_{j}-A_N)} \right] 
    = \frac{(- 2\pi i \gamma_{N-1}^2)^\ell}{ {(-1)^{\binom{\ell}{2}}}   \Delta(\q)\Delta(\p)}
   \big|A^{\ell-1}v_{N-1}, \dots, v_{N-1}, v_N, \dots, A^{\ell-1}v_{N}\big| .
  \]
  Hence, by definition of  $\tilde{h}_{N}$ and $\tilde{\pi}_{N}$,  c.f.~formula \eqref{Y_matrix}, scaling the factors $-2\pi i \gamma_{N-1}^2$ and fully permuting the first $\ell$ columns,  we have arrived at the claimed formula.
\end{proof}

With these preliminaries, we now turn to the intended application of proving Proposition~\ref{prop:ZG}.  Suppose we have a biasing function
\[
  \Bias(\F) = 
  \sum_{z\in Z} 2\F(z)
  -\sum_{w\in W} 2\F(w) ,
\]
for sets $Z, W \subset   \mathcal{D}_{N,\delta}$ such that $|Z|=|W| = \ell/2 \in \N$. 
We define 
\begin{equation} \label{pq}
 \p= J( \overline{Z} \cup Z)
\quad\text{ and }\quad
\q = J( \overline{W} \cup W) 
\end{equation}  
where, as usual, $J$ is the Joukowsky transform. Then, since  $\ZF = \EF \circ J$,  we have the identity
\begin{equation*}
  \Exp\left[ e^{\Bias(\ZF)} \right]
    =
    \E\left[  \frac{  \prod_{i=1}^{\ell} \det(p_i-A_N)e^{-Ng(p_i)}}{ \prod_{j=1}^{\ell}  \det(q_{j}-A_N)e^{-Ng(q_j)}} \right].
\end{equation*}
Applying Lemma~\ref{lem:FSbalanced}, we obtain
\begin{equation*}
  \Exp\left[ e^{\Bias(\ZF)} \right]
  =
  \frac{1}{ \Delta(\q)\Delta(\p)}
  \begin{vmatrix}
   \tilde h_N e^{Ng}(\q) \V(\q) & h_Ne^{Ng}(\q) \V(\q) \\
    \tilde \pi_Ne^{-Ng}(\p) \V(\p) & \pi_Ne^{-Ng}(\p) \V(\p) \\
  \end{vmatrix} .
\end{equation*}

At this point, it is worth recalling the definition of the normalized matrix $M_N$, \eqref{eq:pauli}.   Namely, after rescaling the rows and columns of the previous determinant, we have 
\begin{equation}
  \Exp\left[ e^{\Bias(\ZF)} \right]
  =
  \frac{1}{ \Delta(\q)\Delta(\p)}
  \begin{vmatrix}
    (M_N)_{22}(\q) \V(\q) &(M_N)_{12}(\q) \V(\q) \\
   (M_N)_{21}(\p) \V(\p) & (M_N)_{11}(\p) \V(\p) \\
  \end{vmatrix},
  \label{eq:scaleddet}
\end{equation}
where, according to our notation, $(M_N)_{ij}(\q)$ are diagonal matrices. 
Then, we can expand combinatorially the RHS of formula \eqref{eq:scaleddet} using the following general identity:

\begin{lemma} \label{thm:error}
Let $\mathrm{A}, \mathrm{B},\mathrm{C}$ and $\mathrm{D}$ be $\ell \times \ell$ diagonal matrices. For any $\p ,\q\in\C^\ell$, we have
\begin{equation*}
\begin{vmatrix} 
\mathrm{A} \V(\q)  &  \mathrm{B}  \V(\q)   \\
 \mathrm{C}\V(\p)   &  \mathrm{D} \V(\p) 
\end{vmatrix}
= \sum_{\substack{ S, T \subseteq  [\ell] \\ |T|+|S| = \ell}}
 \Delta(\q_S, \p_T)   \Delta(\q_{S^*},\p_{T^*})  
 \prod_{s\in S} \mathrm{A}_{ss}  \prod_{s\in S^*}  \mathrm{B}_{ss} \prod_{t\in T} \mathrm{C}_{tt}  \prod_{t\in T^*} \mathrm{D}_{tt} . 
 \end{equation*}
where we denote $T^* = [\ell]\backslash T$, $S^* = [\ell]\backslash S$,
 $\p_T=(p_t)_{t\in T}$, and similarly for the tuples  $\p_{T^*}$, $\q_S$ and $\q_{S^*}$. 
\end{lemma}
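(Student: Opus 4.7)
The plan is to establish this identity by combining multilinearity of the determinant with a generalized Laplace expansion along the first $\ell$ columns, which reduces the $2\ell\times 2\ell$ determinant on the left-hand side to a sum of products of two $\ell\times\ell$ Vandermonde determinants.

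First I would use multilinearity row by row. Because each of the diagonal matrices $\mathrm{A},\mathrm{B},\mathrm{C},\mathrm{D}$ acts as a scalar on its row, the $i$-th row of the top block ($1\le i\le\ell$) can be written as $\mathrm{A}_{ii}\vec{a}_i+\mathrm{B}_{ii}\vec{b}_i$, where $\vec{a}_i=(1,q_i,\ldots,q_i^{\ell-1},0,\ldots,0)$ is supported on the first $\ell$ coordinates and $\vec{b}_i=(0,\ldots,0,1,q_i,\ldots,q_i^{\ell-1})$ on the last $\ell$. An analogous split $\mathrm{C}_{ii}\vec{c}_i+\mathrm{D}_{ii}\vec{d}_i$ holds for the bottom rows, using $p_i$ in place of $q_i$. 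Expanding all $2\ell$ rows by multilinearity, the determinant becomes a sum indexed by pairs $(S,T)\in 2^{[\ell]}\times 2^{[\ell]}$, where $S$ records the top rows that contribute their left-block vector and $T$ the bottom rows that do the same. The scalar coefficient attached to $(S,T)$ is exactly $\prod_{s\in S}\mathrm{A}_{ss}\prod_{s\in S^*}\mathrm{B}_{ss}\prod_{t\in T}\mathrm{C}_{tt}\prod_{t\in T^*}\mathrm{D}_{tt}$, matching the factor in the target formula.

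Next I would evaluate the remaining pure-determinant factor $\det M(S,T)$. Since only rows indexed by $S\cup(T+\ell)$ are nonzero on the first $\ell$ columns, the Laplace expansion along those columns produces a vanishing contribution unless $|S|+|T|=\ell$. When equality holds, exactly one minor survives: the top-left minor is the $\ell\times\ell$ Vandermonde matrix built from the concatenated tuple $(\q_S,\p_T)$, with determinant $\Delta(\q_S,\p_T)$, and its complementary bottom-right minor is the Vandermonde on $(\q_{S^*},\p_{T^*})$, with determinant $\Delta(\q_{S^*},\p_{T^*})$. Summing the resulting products over the valid $(S,T)$ reproduces the right-hand side.

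The main obstacle I foresee is the careful bookkeeping of signs. The generalized Laplace expansion introduces a sign $(-1)^{\sigma(S,T)}$ depending on how the rows $S\cup(T+\ell)$ are interleaved with their complement, and the Vandermonde $\Delta$ on a concatenated tuple is also sign-sensitive to the chosen ordering. One has to verify that the Laplace sign either is identically $+1$ or is absorbed into the ordering conventions used in writing $\Delta(\q_S,\p_T)$ and $\Delta(\q_{S^*},\p_{T^*})$; direct checks in the cases $\ell=1,2$ should pin down the convention, after which summing over $\{(S,T):|S|+|T|=\ell\}$ yields the claimed identity.
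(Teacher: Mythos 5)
Your plan — expanding the $2\ell$ rows by multilinearity, observing that only terms with exactly $\ell$ rows landing on each side survive, and then factoring the resulting block determinant — is in substance a rederivation of the generalized Laplace expansion along the first $\ell$ columns, which is exactly the tool the paper's proof cites directly (splitting the matrix into $\tilde{\mathrm{A}}$ and $\tilde{\mathrm{B}}$ and summing $\det(\tilde{\mathrm{A}}_X)\det(\tilde{\mathrm{B}}_{X^*})$ over $X$). So the two routes are essentially the same; you carry out the expansion by hand, the paper quotes it.

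Your worry about signs, however, is not something you can hope to argue away: the identity as printed (and the intermediate Laplace formula in the paper's proof) is in fact missing a sign. Laplace expansion along the first $\ell$ columns carries a factor $(-1)^{\sigma(X)}$, where $\sigma(X)$ counts pairs $(i,j)$ with $i\in X$, $j\notin X$, $i>j$. With $X = S\cup\{\ell+t:t\in T\}$ this becomes $(-1)^{p(S,T)}$ with
\[
p(S,T) = |T|\,|S^*| + \#\{(i,j):\ i\in S,\ j\in S^*,\ i>j\} + \#\{(i,j):\ i\in T,\ j\in T^*,\ i>j\} .
\]
Already for $\ell=1$, the choice $S=\emptyset$, $T=\{1\}$ gives $p=1$, so as printed the lemma reads $ad-bc = ad+bc$. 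A careful execution of your plan would surface exactly this factor, and it belongs in front of each summand. The omission is harmless for the rest of the argument — Proposition~\ref{prop:bound} controls each summand in absolute value, and in the proof of Proposition~\ref{prop:ZG} the lemma is applied forward and then backward with the same convention, so the dropped signs cancel — but the statement itself needs the $(-1)^{p(S,T)}$, and you should not expect direct checks in small cases to make it go away.
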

 
 \begin{proof}
  Let  $\tilde{\mathrm{A}}, \tilde{\mathrm{B}} \in \mathbb{M}_{2\ell\times \ell}$.  
 For any subset $X \subset [2\ell]$, we denote  $\tilde{\mathrm{A}}_X = (\tilde{\mathrm{A}}_{ij})_{i\in X ,  j \in [\ell] }$ and $\tilde{\mathrm{B}}_{X^*} = (\tilde{\mathrm{B}}_{ij})_{ i\notin X , j \in [\ell]}$.
  Using Laplace's formula, we immediately see that
\begin{equation} \label{det_1}
 \det  \begin{pmatrix}
  \tilde{\mathrm{A}} &\tilde{\mathrm{B}}
  \end{pmatrix} =
 \sum_{\begin{subarray}{c} X  \subseteq  [2\ell] \\  |X|= \ell  \end{subarray}} 
 \det (\tilde{\mathrm{A}}_X)  \det (\tilde{\mathrm{B}}_{X^*}) . 
 \end{equation}
Moreover, for any subsets $S, T  \subseteq  [\ell]$, if
 $$
 X = S  \cup \{ \ell+ t : t\in T \}   
 \quad\text{ and }\quad
 \tilde{\mathrm{A}} = \begin{pmatrix} \mathrm{A} \V(\q) \\  \mathrm{C}\V(\p)  \end{pmatrix},
 $$
since $\mathrm{A}$ and $\mathrm{C}$ are diagonal matrices, we check that 
$$
\det (\tilde{\mathrm{A}}_X)  =  \Delta(\q_S,\p_T) \prod_{s\in S} \mathrm{A}_{ss} \prod_{t\in T} \mathrm{C}_{tt} . 
$$
Similarly, we have
 $$
 X^* = S^*  \cup \{ \ell+ t : t\in T^* \}   
 \quad\text{ and }\quad
 \tilde{\mathrm{B}} = \begin{pmatrix} \mathrm{B} \V(\q) \\  \mathrm{D}\V(\p)  \end{pmatrix},
 $$
so that
$$
\det (\tilde{\mathrm{B}}_{X^*})  = \Delta(\q_{S^*},\p_{T^*}) \prod_{s\in S^*} \mathrm{B}_{ss} \prod_{t\in T^*} \mathrm{D}_{tt} . 
$$
To complete the proof, it remains to observe that in  formula \eqref{det_1}, summing over all subset $X  \subseteq  [2\ell]$ with cardinal  $|X|= \ell$, is equivalent to sum over all pair $(S, T)  \subseteq  [\ell]^2$ such that $|S| + |T|= \ell$.  
\end{proof}

The point of this expansion is that all the terms can  be controlled uniformly.

\begin{proposition}
  For all $k,\ell \geq 0$, all $\epsilon > 0$, and all $\delta > 0$ sufficiently small, there is a constant $C>0$ so that uniformly in $\Bias' \in \mathfrak{S}_{k,\epsilon,\delta}$, uniformly in $\Bias \in \mathfrak{W}_{\ell,\delta}(\Bias')$ and uniformly in $S,T \subseteq [2(k+\ell)]$ with $|S|+|T| = 2(k+\ell),$
\begin{equation}\label{bound}
 \left|  \frac{ \Delta(\q_S, \p_T)   \Delta(\q_{S^*},\p_{T^*})  }{\Delta(\q)\Delta(\p)} \right|  \leq C \Exp[ e^{\Bias(\GF)}].
\end{equation}
  \label{prop:bound}
\end{proposition}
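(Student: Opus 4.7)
The plan is to expand both sides of~\eqref{bound} as explicit products over pairs of points in the unit disk and show that the resulting ratio is bounded by the Gaussian moment, with the matching bijection $\phi$ controlling the small-scale cancellations. The starting point is the algebraic identity $|J(u)-J(v)| = |u-v|\,|1-uv|/(2|u||v|)$ for $u,v\in\D$, which splits each Joukowsky-distance factor into a ``distance'' part $|u-v|$ and a ``log-correlation'' part $|1-uv|$.

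First I would substitute this identity into the LHS of~\eqref{bound}. Because $|S|+|T|=2(k+\ell)=|P|=|Q|$, one has $|S_Q^*|=|T_P|$ and $|T_P^*|=|S_Q|$, which forces the $|u|,|v|$ factors and the numerical constants $2$ to cancel pairwise in the ratio. Moreover, every pair of indices inside $\q_S$ (resp.\,inside $\q_{S^*}$, $\p_T$, $\p_{T^*}$) appears with the same multiplicity in the two numerator Vandermondes and in $\Delta(\q)\Delta(\p)$ and hence cancels. What remains is
\[
\left|\frac{\Delta(\q_S,\p_T)\Delta(\q_{S^*},\p_{T^*})}{\Delta(\q)\Delta(\p)}\right|
= \frac{\prod_{\mathrm{cross\text{-}same}}|u-v|\,|1-uv|}{\prod_{\mathrm{within\text{-}split}}|u-v|\,|1-uv|},
\]
where ``cross-same'' denotes pairs $(u,v)\in Q\times P$ with $u,v$ lying in the same block ($S$ and $T$, or $S^*$ and $T^*$), and ``within-split'' denotes pairs inside $Q$ or inside $P$ whose two indices lie in opposite blocks.

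Next I compute the Gaussian moment from the covariance~\eqref{covariance_GF}:
\[
\Exp\!\left[e^{\Bias(\GF)}\right] = \frac{\prod_{z\in Z,\,w\in W}|1-z\bar w|^{2}}{\prod_{z,z'\in Z}|1-z\bar z'|\prod_{w,w'\in W}|1-w\bar w'|}.
\]
Because $P=Z\cup\bar Z$ and $Q=W\cup\bar W$, every factor $|1-uv|$ above collapses into one of the six types $|1-zz'|,|1-z\bar z'|,|1-ww'|,|1-w\bar w'|,|1-zw|,|1-z\bar w|$. On $\mathcal{D}_{N,\delta}$ the unconjugated types $|1-zz'|,|1-ww'|,|1-zw|$ are uniformly bounded between positive constants, since this domain stays a definite distance from the fixed points $\pm1$ of $J$; they contribute only a harmless $O(1)$. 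A direct bookkeeping shows that the ``conjugated'' types $|1-z\bar z'|,|1-w\bar w'|,|1-z\bar w|$ appear in the Vandermonde ratio with exponents that deviate from those prescribed by $\Exp[e^{\Bias(\GF)}]$ by at most a bounded defect per pair, and that this defect has the same small-scale behaviour as the companion $|u-v|$ factors.

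Finally, the residual product of $|u-v|$ factors, together with the exponent defect above, is bounded using the matching bijection $\phi$ from~\eqref{condition_2}. For each matched pair $(z,\phi(z))$ with $z\in\mathbf{z}$, the hyperbolic inequality $\dH(z,\phi(z))\leq \min_{z'\neq z}\dH(z,z')\wedge\min_{w\neq\phi(z)}\dH(\phi(z),w)$ forces every potentially small $|z-w|^{-1}$ in the denominator to be compensated by a numerator factor of the same order. The fixed perturbation points in $\mathbf{z}',\mathbf{w}'$ are $\epsilon$-separated in the hyperbolic metric by~\eqref{condition_1}, so they contribute a multiplicative constant $C(k,\epsilon)$. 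The main obstacle is precisely this last step: for a generic choice of $S,T$, orphan denominator factors as small as $N^{-1+\delta}$ could in principle appear, and the whole point of~\eqref{condition_2} is that every such orphan can be re-paired with a suitable numerator factor coming either from a cross-same pair through $\phi$ or from the $|1-z\bar w|$ exponent defect. Formalising this re-pairing is exactly the content of the combinatorial matching lemma of Section~\ref{sec:matching}, which supplies the uniform constant $C=C(k,\ell,\epsilon)$ claimed.
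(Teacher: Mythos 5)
Your overall strategy—expand the Vandermonde ratio via the Joukowsky identity $|J(u)-J(v)|=|u-v|\,|1-uv|/(2|u||v|)$, classify factors into ``distance'' and ``log-correlation'' types, observe the unconjugated types are $O(1)$ on $\mathcal{D}_{N,\delta}$, and bound what remains by the matching lemma—is the same skeleton as the paper's proof in Section~\ref{sec:matching}, which first shows (\ref{bound_1}) that $L_{d_{\mathrm E}}(T,S)\ll L_\Gamma(T,S)\,L_\dist(T_+,S_+)\,L_\dist(T_-,S_-)$ and then invokes Lemma~\ref{thm:matching} for the pseudohyperbolic factors. However, there is a genuine gap in the middle step.

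You write that the conjugated types ``appear in the Vandermonde ratio with exponents that deviate from those prescribed by $\Exp[e^{\Bias(\GF)}]$ by at most a bounded defect per pair, and that this defect has the same small-scale behaviour as the companion $|u-v|$ factors,'' with the intent of absorbing that defect into the matching lemma. This is not correct as stated, and in particular it is not what the matching lemma can deliver. The discrepancy between the $\Gamma$-factor exponents produced by a given $(S,T)$-split and those appearing in $\Exp[e^{\Bias(\GF)}]$ is the ratio $L_\Gamma(T,S)/L_\Gamma(Z,W)$, and this is a \emph{global} quantity depending on how the bijections $S\leftrightarrow T$ distribute across the conjugate pairs, not a product of local ``per-pair'' defects compatible with the pseudohyperbolic re-pairing of $|u-v|$ factors. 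The paper bounds it by a completely different mechanism: after the algebraic cancellations of (\ref{bound_2}) and the identities (\ref{algebra_1})--(\ref{algebra_2}), the quantity $L_\Gamma(Z,W)/L_\Gamma(T,S)$ is recognized as $\Exp\bigl[\exp\bigl(\sum_{z\in A\cap B}\GF(z)-\sum_{w\in A^*\cap B^*}\GF(w)\bigr)\bigr]$ for explicit intersection sets $A,B$, which is $\geq 1$ by Jensen's inequality. This positivity argument, which uses the Gaussian structure of $\GF$ rather than the matching condition \eqref{condition_2}, is the missing ingredient in your outline. Without it, orphan $|1-z\bar w|$ factors that are as small as $N^{-2(1-\delta)}$ could accumulate in the wrong direction, and the matching lemma (which only controls the pseudohyperbolic $L_\dist$ part) has no say over them.

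To repair the argument you should insert, between the Joukowsky decomposition and the appeal to the matching lemma, an explicit comparison $L_\Gamma(T,S)\leq L_\Gamma(Z,W)$ along the lines of (\ref{bound_2}) and the two algebraic identities, followed by the Jensen step; only the residual pseudohyperbolic products $L_\dist(T_+,S_+)L_\dist(T_-,S_-)$ are then handed to Lemma~\ref{thm:matching}.
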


Once the asymptotics of the matrix $M_N$ are known, this bound is the main technical task to obtain proposition~\ref{prop:ZG}. Its proof relies on both conditions \eqref{condition_1} and \eqref{condition_2} and is based on a rather sophisticated combinatorial matching between the terms in the nominator and denominator of the LHS of   \eqref{bound}.  For this reason, we postpone it to the section~\ref{sec:matching}. 

The relation between the left hand side and right hand side of \eqref{bound} may not be clear at first sight.  Observe that the next lemma shows that there is essentially equality (with constant $C=1$) in \eqref{bound} if the sets $T$ and $S$ are chosen so that $\q_S= \q_+$ and $\p_T = \p_-$ where
\begin{equation}
\begin{array}{cccc}
\p_+ = J(\overline{Z}), & \p_- =J(Z),& \q_+ = J(\overline{W}), & \q_- =J(W) 
\end{array}.
\end{equation}
In particular, these notation are chosen so that $\p_+, \q_+ \subset \UH_+$  and 
$\p_-, \q_- \subset \UH_-$.   

\begin{lemma} \label{thm:Laplace_G}
With the above notation,
    \begin{equation} \label{det_2}
    \Exp\left[ e^{\Bias(\GF)} \right]
    =\frac{ \big|  \Delta(\q_+, \p_-)\big|^2 }{\big|\Delta(\p)\Delta(\q)\big|}
    + \underset{N\to\infty}{O}\left(N^{-\delta}\Exp[e^{\Bias(\GF)}]\right).
  \end{equation}
\end{lemma}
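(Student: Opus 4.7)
The plan is to compute both sides as explicit products over the points of $Z$ and $W$, and then compare. Throughout, set $k := |Z| = |W|$.

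First, since $\GF$ is a centered Gaussian field with covariance \eqref{covariance_GF}, the random variable $\Bias(\GF)$ is a centered real Gaussian, so $\E[e^{\Bias(\GF)}] = e^{\Var(\Bias(\GF))/2}$. Expanding the variance of $2\sum_{z\in Z}\GF(z) - 2\sum_{w\in W}\GF(w)$ and exponentiating yields
\[
    \E\bigl[e^{\Bias(\GF)}\bigr] = \frac{\prod_{z\in Z,\,w\in W}|1-z\bar w|^{2}}{\prod_{z,z'\in Z}|1-z\bar{z'}|\,\prod_{w,w'\in W}|1-w\bar{w'}|}.
\]

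Second, I would expand the Vandermonde determinants. Ordering $\p = J(\bar Z) \sqcup J(Z)$ with $J(\bar Z)$ first gives $\Delta(\p) = \Delta(J(\bar Z))\Delta(J(Z))\prod_{z,z'\in Z}(J(z')-J(\bar z))$, and likewise for $\Delta(\q)$ and $\Delta(\q_+,\p_-)$. Since $J$ has real coefficients, $J(\bar z)=\overline{J(z)}$ and therefore $|\Delta(J(\bar Z))|=|\Delta(J(Z))|$. Many factors then cancel in the ratio, leaving
\[
    \frac{|\Delta(\q_+,\p_-)|^2}{|\Delta(\p)\Delta(\q)|} = \frac{\prod_{z\in Z,\,w\in W}|J(z)-J(\bar w)|^2}{\prod_{z,z'\in Z}|J(z')-J(\bar z)|\prod_{w,w'\in W}|J(w')-J(\bar w)|}.
\]
Now substitute the factorization $J(\zeta)-J(\eta) = \tfrac{(\zeta-\eta)(\zeta\eta-1)}{2\zeta\eta}$, which gives $|J(\zeta)-J(\eta)|^2 = \tfrac{|\zeta-\eta|^2|1-\zeta\eta|^2}{4|\zeta|^2|\eta|^2}$. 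The $|1-\cdot|$ factors exactly reproduce the Gaussian products from step one (after relabeling summation indices), while the moduli prefactors $|\zeta|^{\pm 1}$ cancel thanks to $|Z|=|W|=k$. This yields
\[
    \frac{|\Delta(\q_+,\p_-)|^2}{|\Delta(\p)\Delta(\q)|} = \E\bigl[e^{\Bias(\GF)}\bigr]\cdot\frac{\prod_{z\in Z,\,w\in W}|z-\bar w|^2}{\prod_{z,z'\in Z}|z-\bar{z'}|\,\prod_{w,w'\in W}|w-\bar{w'}|}.
\]

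Finally, I would show the leftover factor is $1+O(N^{-\delta})$. Since $x \in \mathbf{J} \subset [-1/2,1/2]$, its Joukowsky pre-image $\omega = e^{i\arccos x} \in \partial\D$ satisfies $|\Im\omega| \geq \sqrt 3/2$, so $|\omega-\bar\omega|$ is bounded away from zero uniformly in $x$. Every point of $Z,W \subset \mathcal{D}_{N,\delta}$ lies within Euclidean distance $O(N^{-\delta})$ of $\omega$, hence each of $|z-\bar w|$, $|z-\bar{z'}|$, $|w-\bar{w'}|$ equals $|\omega-\bar\omega|+O(N^{-\delta})$. Taking logarithms, the powers of $|\omega-\bar\omega|$ balance ($2k^2-k^2-k^2=0$), and the error terms sum to $O(N^{-\delta})$, completing the proof. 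The only (modest) obstacle is the bookkeeping of the moduli $|\zeta|^{\pm 1}$ in the Joukowsky substitution, which is why the balanced cardinality hypothesis $|Z|=|W|$ is essential; in particular the non-trivial point is that the Joukowsky map produces the correct $|1-z\bar w|$ factors (matching the covariance of $\GF$) rather than $|1-zw|$.
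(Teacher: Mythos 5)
Your proof is correct and follows essentially the same route as the paper: compute the Gaussian Laplace transform as a product via the covariance formula \eqref{covariance_GF}, factor the Vandermonde ratio into products over $Z\times W$ of Joukowsky differences $J(z)-J(\bar w)=\tfrac{(z-\bar w)(z\bar w-1)}{2z\bar w}$, and observe that the $|1-\cdot|$ factors exactly match the Gaussian product while the remaining prefactors combine to $1+O(N^{-\delta})$. Your bookkeeping of the $|\zeta|$ prefactors and the cancellation of the $|\omega-\bar\omega|$ powers (which requires $|Z|=|W|$) is a bit more explicit than the paper's, which records the additive estimate $|p-\bar q|=|1-z\bar w|+O(N^{-\delta})$ and leaves the power cancellation implicit, but the underlying computation is identical.
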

\begin{proof}
First, if we write $\Delta(\p) = \Delta(\p_+;\p_-)$ and $\Delta(q) =\Delta(\q_+;\q_-)$, then  expand the Vandermonde determinants on the RHS of formula \eqref{det_2} into products, we see that
    \begin{equation} \label{det_3}
\frac{ \big|  \Delta(\q_+, \p_-)\big|^2 }{\big|\Delta(\p)\Delta(\q)\big|}
=     \frac{  \prod_{\p_-\times \q_+} |p- q|^2 }
 {\prod_{\p_+\times \p_-}|p-  p'|\prod_{\q_+\times\q_-}|q- q'|} . 
\end{equation}
Second, by formula \eqref{covariance_GF},
\begin{align}
    \Exp\left[ e^{\Bias(\GF)} \right]
   &\notag = \exp\left(2 \Exp\left[ \sum_{Z\times Z} \GF(z)\GF(z') + \sum_{W\times W} \GF(w)\GF(w')    - 2\sum_{Z\times W} \GF(z)\GF(w) \right]\right) \\
    & \label{eq:Laplace_G}
    = \frac{\prod_{Z\times W} |1-z \bar w|^2}{\prod_{Z\times Z} |1-z\bar z'| \prod_{W\times W} |1- w\bar w'|} .
    \end{align}
  By definition of the Joukowsky transform, \eqref{eq:Joukowsky},  we have   
\begin{equation}\label{J_distance}
  |p- q| = |J(z) - J(w)|  = \frac{|z- w||1-z w|}{2|z  w|}.
\end{equation}
In particular, uniformly for all $z,w \in   \mathcal{D}_{N,\delta}$,  
$$
   |p- \bar q| = |1-z \bar w| +  \underset{N\to\infty}{O}(N^{-\delta}).
 $$   
Thus, if we replace all the terms in  formula \eqref{eq:Laplace_G} using this asymptotic expansion, we obtain
    \begin{equation*}
    \Exp\left[ e^{\Bias(\GF)} \right]
    =
    \frac{  \prod_{\p_+\times \q_+} |p-\bar q|^2 }
    {\prod_{\p_+\times \p_+}|p- \bar p'|\prod_{\q_+\times\q_+}|q- \bar q'|}
    + \underset{N\to\infty}{O}\left(N^{-\delta}\Exp[e^{\Bias(\GF)}]\right).
  \end{equation*}
Since $\overline{\q_+} = \q_-$ and  $\overline{\p_+} = \p_-$, the claim follows immediately from formula \eqref{det_3}.
\end{proof}

We are now in position to complete the proof of proposition~\ref{prop:ZG}. 
In fact, equipped with the previous lemmas, this just boils down to basic linear algebra.

\begin{proof}[Proof of Proposition~\ref{prop:ZG}]
  We start by applying  lemma~\ref{thm:error} to formula  \eqref{eq:scaleddet}, this leads us to 
  \begin{align} \notag
    \Exp\left[ e^{\Bias(\ZF)} \right]
    = &\sum_{\substack{ S, T \subseteq [\ell] , \\ |T|+|S| = \ell}}  \frac{\Delta(\q_S, \p_T)   \Delta(\q_{S^*},\p_{T^*})  }{ \Delta(\q)\Delta(\p)}  \\
    &\label{det_4}  \times    \prod_{q\in S} (M_N)_{22}(q)  
    \prod_{q\in S^*} (M_N)_{12}(q)
    \prod_{p\in T} (M_N)_{21}(p)
    \prod_{p\in T^*} (M_N)_{11}(p)  .  
  \end{align}
  For any $0<\delta <1/2$, the image of the sets  $\mathcal{D}_{N,\delta}$ and $\overline{\mathcal{D}_{N,\delta}}$ under the Joukowsky transform are compact sets $\mathscr{K}_N^\pm$ which satisfy the assumption of proposition~\ref{prop:multicut}. Hence, we can replace the entries of $M_N$ in formula \eqref{det_4} by $M^\infty_{N,\pm}(x)$ up to an error of order $N^{-\delta}$.  Moreover, by proposition~\ref{prop:bound},  this error is uniformly controlled by some constant multiple of $\Exp[e^{\Bias(\GF)}].$ Then, using lemma~\ref{thm:error} 
{\it backward}, we obtain
\begin{equation}\label{det_5}
    \Exp\left[ e^{\Bias(\ZF)} \right]
    =\frac{1}{\Delta(\p)\Delta(\q)}
    \begin{vmatrix}
      \mathrm{M}^+_{22} \V_\ell(\q_+) &
       \mathrm{M}^+_{12} \V_\ell(\q_+) \\ 
       \mathrm{M}^-_{22} \V_\ell(\q_-) &
        \mathrm{M}^-_{12} \V_\ell(\q_-) \\  
      \mathrm{M}^+_{21} \V_\ell(\p_+) &
       \mathrm{M}^+_{11} \V_\ell(\p_+) \\ 
       \mathrm{M}^-_{21} \V_\ell(\p_-) &
        \mathrm{M}^-_{11} \V_\ell(\p_-)
    \end{vmatrix}
    +O\left(N^{-\delta}\Exp[e^{\Bias(\GF)}]\right).
\end{equation}
where we let $\mathrm{M}^\pm_{ij} = \big(M^\infty_{N,\pm}(x)\big)_{ij} $, not to overload the notation. Note that $\mathrm{M}^\pm_{ij}$ are interpreted as diagonal matrices and, for instance, by formula \eqref{V_matrix},
\begin{equation*} 
\mathrm{M}^+_{22} \V_\ell(\q_+) = 
\begin{pmatrix}
M^\infty_{N,+}(x) &&0 \\
& \ddots & \\
0&& M^\infty_{N,+}(x)
\end{pmatrix}
 \begin{pmatrix}
1  & q_1 & \hdots & q_1^{\ell-1}  \\
\vdots & & & \vdots \\
1 & q_{\ell/2} &\hdots & q_{\ell/2}^{\ell-1}  
 \end{pmatrix} . 
\end{equation*}
By definition of $M^\infty_N$, formula~\eqref{M_global}, we have the relations:
$$\begin{array}{llll}
 \mathrm{M}^-_{22} = - \mathrm{M}^+_{21} , &
 \mathrm{M}^-_{12} =- \mathrm{M}^+_{11} , &
  \mathrm{M}^-_{21} =   \mathrm{M}^+_{22}, &
  \mathrm{M}^-_{11} = \mathrm{M}^+_{12} 
 \end{array}.
$$
So, after rearranging (by permuting certain rows) formula \eqref{det_5}, we obtain 
  \begin{equation}\label{det_6}
    \Exp\left[ e^{\Bias(\ZF)} \right]
    =\frac{1}{\Delta(\p)\Delta(\q)}
    \begin{vmatrix}
      \mathrm{M}^+_{22} \V_\ell(\q_+) &
       \mathrm{M}^+_{12} \V_\ell(\q_+) \\
       \mathrm{M}^+_{22} \V_\ell(\p_-) &
        \mathrm{M}^+_{12} \V_\ell(\p_-)\\        
      \mathrm{M}^+_{21} \V_\ell(\p_+) &
       \mathrm{M}^+_{11} \V_\ell(\p_+) \\
           \mathrm{M}^+_{21} \V_\ell(\q_-) &
       \mathrm{M}^+_{11} \V_\ell(\q_-)  
    \end{vmatrix}
    +O\left(N^{-\delta}\Exp[e^{\Bias(\GF)}]\right).
 \end{equation}
Then, we use the factorization
$$
    \begin{pmatrix}
      \mathrm{M}^+_{22} \V_\ell(\q_+) &
       \mathrm{M}^+_{12} \V_\ell(\q_+) \\
       \mathrm{M}^+_{22} \V_\ell(\p_-) &
        \mathrm{M}^+_{12} \V_\ell(\p_-)\\        
      \mathrm{M}^+_{21} \V_\ell(\p_+) &
       \mathrm{M}^+_{11} \V_\ell(\p_+) \\
           \mathrm{M}^+_{21} \V_\ell(\q_-) &
       \mathrm{M}^+_{11} \V_\ell(\q_-)  
    \end{pmatrix}
=
    \begin{pmatrix}
      \V_\ell(\q_+, \p_-) & 0 \\
       0 & \V_\ell(\p_+,\q_- ) 
	\end{pmatrix}
    \begin{pmatrix}
      \mathrm{M}^+_{22} \I_\ell &       \mathrm{M}^+_{12} \I_\ell \\       
      \mathrm{M}^+_{21} \I_\ell  &     \mathrm{M}^+_{11} \I_\ell   
    \end{pmatrix}.
$$
Note that the last matrix is the Kronecker product  
$
   \begin{pmatrix}
      \mathrm{M}^+_{22}  &       \mathrm{M}^+_{12} \\       
      \mathrm{M}^+_{21}   &     \mathrm{M}^+_{11} 
    \end{pmatrix} \otimes \I_\ell
$,
so that its determinant is
$ \mathrm{M}^+_{11}  \mathrm{M}^+_{22} -  \mathrm{M}^+_{21} \mathrm{M}^+_{12}$
which is identically equal to 1 by lemma~\ref{lem:magicdeterminants}.
Moreover, 
since  $\V_\ell(\q_+,\p_-)$ and  $\V_\ell(\p_+, \q_-)$ are exactly Vandermonde matrices, we obtain
\begin{align*}
    \begin{vmatrix}
      \mathrm{M}^+_{22} \V_\ell(\q_+) &
       \mathrm{M}^+_{12} \V_\ell(\q_+) \\
       \mathrm{M}^+_{22} \V_\ell(\p_-) &
        \mathrm{M}^+_{12} \V_\ell(\p_-)\\        
      \mathrm{M}^+_{21} \V_\ell(\p_+) &
       \mathrm{M}^+_{11} \V_\ell(\p_+) \\
           \mathrm{M}^+_{21} \V_\ell(\q_-) &
       \mathrm{M}^+_{11} \V_\ell(\q_-)  
    \end{vmatrix}
 =\Delta(\q_+, \p_-)  \Delta(\p_+, \q_-)   
= |\Delta(\q_+, \p_-) |^2
\end{align*}
since  $ \p_- =\overline{\p_+} $, $\q_-= \overline{\q_+}$, and $\ell$ is even. 
Hence, combining formula \eqref{det_6} and lemma~\ref{thm:Laplace_G}, we conclude that
$$
    \Exp\left[ e^{\Bias(\ZF)} \right]
    = \Exp\left[ e^{\Bias(\GF)} \right]
    + \underset{N\to\infty}{O}\left(N^{-\delta}\Exp[e^{\Bias(\GF)}]\right) ,
$$
which is the required asymptotic behavior. 
\end{proof}

\section{Matching lemma} \label{sec:matching}

The purpose of this section is to prove proposition~\ref{prop:bound}. Recall that $Z$ and $W$ are two disjoint sets in the domain $\mathcal{D}_{N,\delta}$, \eqref{eq:domain}, such that $|Z|=|W| = k+\ell$ and which satisfy the conditions 
\eqref{condition_1} and \eqref{condition_2}.  
For notational convenience, instead of viewing $S, T$ as subsets of the integers $[2(\ell+k)]$, we let 
$S \subseteq \overline{W} \cap W $ and 
$T \subseteq  \overline{Z} \cap Z$. We also define
\begin{equation} \label{+-}
\begin{array}{ll}
T_+   =  \{ z \in Z : z \in T \}  \hspace{1cm}  &T_- = \{  z \in Z  :  \bar{z} \in T  \} \\
T_+^* =  \{ z \in Z : z \in T^* \}   &T_-^* = \{ z \in Z : \bar{z} \in T^* \}
\end{array}
\end{equation}
and similarly for $S_+, S_- , S_+^*, S_-^*$.
Note that all these sets lie in the upper-half disk $\D_+$ and that we have the decompositions:
\begin{equation} \label{decomposition_TS}
Z = T_+  \cup  T_+^* = T_-  \cup  T_-^* 
\quad\text{ and }\quad
 W=  S_+  \cup  S_+^* = S_-  \cup  S_-^*  .
\end{equation}

For any function $f :\D^2 \to \R$ and  any finite sets 
$A$ and $B$ of points in $\D$, we denote
$$
f(A,B) = \prod_{z \in A , w\in B} f(z, w)
$$
and 
\begin{equation} \label{L}
L_f(A, B) = \frac{f(A, B) f(A^*, B^*)}{f(A, A^*) f(B,B^*)}
\end{equation}
where $A^*= Z \backslash A$ and $B^* = W\backslash B$ respectively. In the following, we let 
$$
 \Gamma(z, w) = |1- z\bar w | ,
$$
$d_{\mathrm{E}}$ be the Euclidean metric on $\C$, 
and we introduce the \emph{pseudohyperbolic metric} on $\D$ given by
\begin{equation} \label{metric}
\dist(z,w) = \tanh(\dH(z,w)) = \frac{ | z-w |}{|1-z\bar w|}  . 
\end{equation}
It is easy to check that $\dist$ is indeed a metric  which is uniformly bounded by 1.
As before, we let $p = J(z)$ and $q = J(w)$ for $z\in Z$ and $w\in W$ respectively. Finally, recall that, by formula \eqref{J_distance}, if  the parameter $N$ is large, we have
\begin{equation} \label{J_distance_1}
 \frac{1}{4} \le   \frac{ |z- w| }{|p- q|} \le 1
\hspace{.5cm}\text{and}\hspace{.5cm}
 \frac{1}{4} \le   \frac{ |1- z \bar{w}|}{|p- \bar{q}|} \le 1  . 
\end{equation}

Note that expanding the Vandermonde determinant on the LHS of \eqref{bound}, we obtain
\begin{align*}
 \left|  \frac{ \Delta(\q_S, \p_T)   \Delta(\q_{S^*},\p_{T^*})  }{\Delta(\q)\Delta(\p)} \right|
&=    \frac    {\prod_{T \times S}|p-q| \prod_{T^*\times S^*}|p-q|}
    {\prod_{T\times T^*}|p-p'| \prod_{S \times S^*}|q-q'|}   \\
&= L_{d_{\mathrm{E}}}(T, S)  ,
  \end{align*}
using the notation \eqref{L}. Moreover, we can rewrite formula \eqref{eq:Laplace_G}
as
$$
\Exp\big[ e^{\Bias(\GF)} \big] 
 = L_\Gamma(Z, W)
$$
where  by convention:  $Z^*= Z$ and $W^* = W$. 
Thus, we want to demonstrate that there exists a constant $C>0$ so that  uniformly over many choices of bias $\Bias$ and subsets 
$S \subseteq \overline{W} \cap W $, $T \subseteq  \overline{Z} \cap Z$,  we have
\begin{equation} \label{bound_0}
 L_{d_{\mathrm{E}}}(T, S) \le C L_\Gamma(Z, W) .
  \end{equation}

First, using the estimate \eqref{J_distance_1}, the definitions of the metric $\dist$ and \eqref{+-}, it is easy to verify that there exists a constant $C>0$ which only depends on $|Z|$ such that
\begin{equation}\label{bound_1}
L_{d_{\mathrm{E}}}(T, S)  \le C L_\Gamma(T, S)  L_\dist(T_+, S_+) L_\dist(T_-, S_-) .  
\end{equation}

The second step is to compare the quantities $ L_\Gamma(T, S) $ and $L_\Gamma(Z, W) $. Using the relations \eqref{+-}, we have
$$
    L_\Gamma(T,S) =  \frac{\Gamma(T_+, S_-) \Gamma(T_-, S_+) \Gamma(T_+^*, S_-^*) \Gamma(T_-^*, S_+^*)}
    {\Gamma(T_+, T_-^*) \Gamma(T_-, T_+^*) \Gamma(S_+, S_-^*) \Gamma(S_-, S_+^*)} 
    L_\Gamma(T_+, S_+) L_\Gamma(T_-, S_-) .
    $$
On the other hand, using \eqref{decomposition_TS}, we also see that
$$
L_\Gamma(Z, W) =  \frac{\Gamma(T_+, S_-) \Gamma(T_-, S_+) \Gamma(T_+^*, S_-^*) \Gamma(T_-^*, S_+^*)}
    {\Gamma(T_+, T_-^*) \Gamma(T_-, T_+^*) \Gamma(S_+, S_-^*) \Gamma(S_-, S_+^*)}  
   \frac{ L_\Gamma(T_+, S_-^*) L_\Gamma(T_-, S_+^*) }{L_\Gamma(T_+, T_-) L_\Gamma(S_+, S_-)} . 
$$
Hence, several terms cancel and we are left with
\begin{equation} \label{bound_2}
\frac{L_\Gamma(Z, W) }{  L_\Gamma(T,S)}
=   \frac{ L_\Gamma(T_+, S_-^*) L_\Gamma(T_-, S_+^*) }{   L_\Gamma(T_+, S_+) L_\Gamma(T_-, S_-) }
\frac{1}{L_\Gamma(T_+, T_-) L_\Gamma(S_+, S_-)} . 
\end{equation}

We will ultimately show that this quantity is bounded by $1.$  To do this, we begin by using the following combinatorial identities to take advantage of cancellation in~\eqref{bound_2}.

\begin{lemma}
Let $X$ and $Y$ be any finite sets of (distinct) points in $\D$. For any subsets
 $A, B \subseteq X$, $E,F \subseteq Y$, we have 
\begin{align}
\frac{\Gamma(A,E)\Gamma(B^*, F^*)}{\Gamma(A, F^*)\Gamma(B^*,E)} 
&\label{algebra_1}
=\frac{\Gamma(A \cap B , E \cap F)}{\Gamma(A \cap B, E^*\cap F^*)} \frac{\Gamma(A^* \cap B^*, E^*\cap F^*)}{\Gamma(A^* \cap B^*, E \cap F)} , \\
\frac{L_\Gamma(A,E) L_\Gamma(B,F)}{L_\Gamma(A,F^*)L_\Gamma(B,E^*)}
&=\label{algebra_2} \left(\frac{\Gamma(A \cap B , E\cap F) \Gamma(A^* \cap B^* , E^*\cap F^*)}{\Gamma(A \cap B , E^*\cap F^*) \Gamma(A^* \cap B^* , E\cap F)} \right)^2 ,
\end{align}
where $A^* = X\backslash A$, $E^*=Y\backslash E$   and similarly for $B^*$ and $F^*$. 
\end{lemma}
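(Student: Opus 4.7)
The plan is to prove both identities by reducing them to a pointwise calculation of the exponent of each factor $\Gamma(x,y)$, exploiting the key fact that $\Gamma$ is multiplicative over sets: for $U\subseteq X$ and $V\subseteq Y$,
\[
\log\Gamma(U,V) \;=\; \sum_{x\in X,\,y\in Y} \mathbf{1}_U(x)\,\mathbf{1}_V(y)\,\log\Gamma(x,y).
\]
Symmetry $\Gamma(U,V)=\Gamma(V,U)$ (which follows from $|1-x\bar y|=|1-\bar x y|=|1-y\bar x|$) will be used freely.

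For identity \eqref{algebra_1}, taking the logarithm of the LHS expresses it as $\sum_{(x,y)\in X\times Y} \alpha(x)\beta(y)\log\Gamma(x,y)$, where
\[
\alpha(x) \;=\; \mathbf{1}_A(x) - \mathbf{1}_{B^*}(x) \;=\; \mathbf{1}_A(x) + \mathbf{1}_B(x) - 1,
\]
\[
\beta(y) \;=\; \mathbf{1}_E(y) - \mathbf{1}_{F^*}(y) \;=\; \mathbf{1}_E(y) + \mathbf{1}_F(y) - 1.
\]
A one-line case analysis shows $\alpha(x)=+1$ on $A\cap B$, $\alpha(x)=-1$ on $A^*\cap B^*$, and $\alpha(x)=0$ on the symmetric difference of $A$ and $B$; the function $\beta$ has the analogous property. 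Hence the product $\alpha(x)\beta(y)$ is supported on the four ``corner rectangles'' $(A\cap B)\times(E\cap F)$, $(A\cap B)\times(E^*\cap F^*)$, $(A^*\cap B^*)\times(E\cap F)$, $(A^*\cap B^*)\times(E^*\cap F^*)$ with signs $+,-,-,+$ respectively, and exponentiating yields exactly the RHS of \eqref{algebra_1}.

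For \eqref{algebra_2}, I would first plug in the definition \eqref{L} of $L_\Gamma$ and observe that all four ``diagonal'' factors cancel pairwise between numerator and denominator: $\Gamma(A,A^*)$ appears in the denominator of both $L_\Gamma(A,E)$ and $L_\Gamma(A,F^*)$, and similarly for $\Gamma(B,B^*)$, while $\Gamma(E,E^*)=\Gamma(E^*,E)$ appears in the denominator of $L_\Gamma(A,E)$ and $L_\Gamma(B,E^*)$, and $\Gamma(F,F^*)=\Gamma(F^*,F)$ appears in the denominators of $L_\Gamma(B,F)$ and $L_\Gamma(A,F^*)$. After these cancellations the ratio reduces to
\[
\frac{\Gamma(A,E)\,\Gamma(A^*,E^*)\,\Gamma(B,F)\,\Gamma(B^*,F^*)}{\Gamma(A,F^*)\,\Gamma(A^*,F)\,\Gamma(B,E^*)\,\Gamma(B^*,E)},
\]
which factors as the product
\[
\frac{\Gamma(A,E)\Gamma(B^*,F^*)}{\Gamma(A,F^*)\Gamma(B^*,E)} \,\cdot\, \frac{\Gamma(A^*,E^*)\Gamma(B,F)}{\Gamma(A^*,F)\Gamma(B,E^*)}.
\]
The first factor is \eqref{algebra_1} verbatim. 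The second factor is \eqref{algebra_1} applied under the substitution $(A,B,E,F)\mapsto(A^*,B^*,E^*,F^*)$; the RHS of \eqref{algebra_1} is invariant under this substitution because the four corner pieces $A\cap B,\ A^*\cap B^*,\ E\cap F,\ E^*\cap F^*$ are simply permuted among themselves. Therefore both factors evaluate to the same expression, and their product is its square, which is precisely the RHS of \eqref{algebra_2}.

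There is no genuine obstacle; the only ``creative'' step is spotting that the eight-term ratio obtained after cancellation in \eqref{algebra_2} splits into two copies of \eqref{algebra_1} (one direct, one after complementing all sets), so that \eqref{algebra_1} does essentially all the work. The rest is careful bookkeeping with indicator functions and the symmetry of $\Gamma$.
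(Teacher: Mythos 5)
Your proof is correct and follows essentially the same route as the paper's: the indicator-function bookkeeping in your argument for \eqref{algebra_1} is exactly the content of the paper's ``simple observation'' $\Gamma(A,E)/\Gamma(B^*,E)=\Gamma(A\cap B,E)/\Gamma(A^*\cap B^*,E)$, and for \eqref{algebra_2} the paper records precisely your cancellation of the diagonal factors, yielding the same factorization into two copies of \eqref{algebra_1} (one direct, one with all sets complemented).
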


\begin{proof} Both formulae \eqref{algebra_1} and  \eqref{algebra_2} follow from the simple observations that 
$$
\frac{\Gamma(A,E)}{\Gamma(B^*,E)} = \frac{\Gamma(A \cap B , E)}{\Gamma(A^* \cap B^*, E)} 
$$
and 
$$
\frac{L_\Gamma(A,E) L_\Gamma(B,F)}{L_\Gamma(A,F^*)L_\Gamma(B,E^*)}
= \frac{\Gamma(A,E)\Gamma(B^*, F^*)}{\Gamma(A, F^*)\Gamma(B^*,E)}  
 \frac{\Gamma(A^*,E^*)\Gamma(B, F)}{\Gamma(A^*, F)\Gamma(B,E^*)}  .
$$
\end{proof}

Applying formula \eqref{algebra_1} with $A=F = T_+$ and $B= E= T_-$, we obtain
$$
L_\Gamma(T_+, T_-) =  \frac{\Gamma(T_+ \cap T_- , T_+ \cap T_- ) \Gamma(T^*_+ \cap T^*_- , T^*_+ \cap T^*_- ) }{ \Gamma(T_+ \cap T_- , T_+^* \cap T_-^* )^2 } .
$$
Similarly, applying formula \eqref{algebra_2} with $A =T_+$, $B=T_-$, $E=S_+$ and $F=S_-$, we get
$$
\frac{   L_\Gamma(T_+, S_+) L_\Gamma(T_-, S_-) }{ L_\Gamma(T_+, S_-^*) L_\Gamma(T_-, S_+^*) } =  \left(\frac{\Gamma(T_+ \cap T_- , S_+\cap S_-) \Gamma(T_+^* \cap T_-^* , S_+^*\cap S_-^*)}{\Gamma(T_+ \cap T_- , S_+^*\cap S_-^*) \Gamma(T_+^* \cap T_-^* , S_+\cap S_-)} \right)^2 . 
$$

So, if we now denote $A = T_+ \cap T_-$, $A^* = T^*_+ \cap T^*_-$,  $B = S_+ \cap S_-$ and $B^* = S^*_+ \cap S^*_-$, by formula \eqref{bound_2}, this implies that
\begin{align*}
\frac{L_\Gamma(Z, W) }{  L_\Gamma(T,S)}
&= \frac{ \Gamma(A,B^*)^2 \Gamma(A^*,B)^2 \Gamma(A,A^*)^2 \Gamma(B,B^*)^2}{\Gamma(A,B)^2 \Gamma(A^*,B^*)^2 \Gamma(A,A) \Gamma(A^*,A^*) \Gamma(B,B) \Gamma(B^*,B^*)}\\
&= \Exp\left[ \exp \bigg( \sum_{z\in A \cap B} \GF(z)   -\sum_{w\in A^* \cap B^*} \GF(w)  \bigg) \right]  .
\end{align*}
The last equality follows directly from the definition of $\Gamma$ and the covariance structure of the Gaussian field $\GF$, \eqref{covariance_GF}. 
By Jensen's inequality, this implies that $  L_\Gamma(T,S) \le L_\Gamma(Z, W) $ and by formula \eqref{bound_1}, 
\begin{equation}\label{bound_3}
\frac{L_{d_{\mathrm{E}}}(T, S)}{ L_\Gamma(Z, W)  }  \le C  L_\dist(T_+, S_+) L_\dist(T_-, S_-) .  
\end{equation}

Hence, in order to prove \eqref{bound_0}  to complete the proof of proposition~\ref{prop:bound}, 
it remains to show that there  exists a constant  $C>0$ which depends only on the parameters $\epsilon , k ,\ell$ such that
$$
\max \big\{  L_\dist(T_+, S_+) ; L_\dist(T_-, S_-) \big\} \le C .
$$
This is the point of lemma~\ref{thm:matching} below.
In particular, note that the hypotheses on $\Bias' \in \mathfrak{S}_{k,\epsilon,\delta}$ and  $\Bias \in \mathfrak{W}_{\ell,\delta}(\Bias')$,  \eqref{condition_1}--\eqref{condition_2}, imply that after correctly re-ordering the sets $Z$ and $W$,   $Z\times W$ is a pair-configuration for the  \emph{pseudohyperbolic metric} \eqref{metric} in the sense of the following definition.

\begin{definition} \label{def:pair}
Let $\dist$ be a metric such that $\dist(z,w) \le 1$ for all $z,w \in\D$ and let $ \epsilon>0$ be a small constant. 
A {\bf pair-configuration}  is a collection
$$
\mathfrak{U} = \big( (z_1, w_1), \dots, (z_{\ell+k}, w_{\ell+k})  \big) \in (\D\times \D)^{\ell+k}
$$
 such that the following conditions holds:
\begin{align}
&\label{pair_1}
\dist(z_j, w_j) \leq \left(  \hspace{-.1cm} \min_{w\in W\backslash\{w_j\}} \dist(w,w_j)\wedge  \hspace{-.1cm} \min_{z\in Z\backslash\{z_j\}}  \dist(z,z_j)\right) &\text{if }1 \le j\le \ell , \\
&\label{pair_2}
\begin{cases}
 \dist(z_i, z_j) \wedge \dist(w_i, w_j) \ge \epsilon&\text{if }\ell <  i < j  \le \ell+ k  \\
 \dist(z_i, w_j) \ge \epsilon &\text{if }\ell <  i ,  j  \le \ell+ k  
\end{cases} ,
\end{align}
where, as usual $Z=(z_1,\dots, z_{k+\ell})$ and $W=(w_1,\dots, w_{k+\ell})$.
\end{definition}

\begin{lemma} \label{thm:matching}
There exists a constant $C>0$ which only depends on $\epsilon, \ell , k$ such that for any pair-configuration  $\mathfrak{U}=(Z,W)$ and for any subsets 
$T \subseteq Z$, $S\subseteq W$,  we have:
\begin{equation} \label{pair_0}
 L_\dist(T, S) =  \frac{\dist(T, S)\dist(T^*, S^*)}{\dist(T, T^*)\dist(S, S^*)} \le C , 
 \end{equation}
 where  $T^*= Z\backslash T$ and $S^*= W\backslash S$.
\end{lemma}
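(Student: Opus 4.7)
The plan is to prove the bound by reducing $L_\dist(T,S)$ to a product over the ``matched'' pairs $(z_j,w_j)$ and then exploiting the pair-configuration constraints \eqref{pair_1}--\eqref{pair_2} to cancel the only potentially unbounded factors.  First, I would partition the $\ell+k$ pair indices by type based on the membership of $z_j$ and $w_j$ in $T/T^*$ and $S/S^*$ respectively; write $I_{++},I_{+-},I_{-+},I_{--}$ for the four classes.  Since $\dist$ is a genuine metric with $\dist\le 1$, for any $j\ne i$ with $j\le \ell$ the triangle inequality together with $a_j=\dist(z_j,w_j)\le\dist(z_j,z_i)\wedge \dist(w_j,w_i)$ yields $\dist(z_j,w_i),\dist(w_j,z_i)\le 2\dist(z_j,z_i)$ and $\dist(z_j,z_i)\le 2\dist(w_j,z_i)\le 4\dist(w_j,w_i)$.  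Thus the four quantities $\dist(z_j,z_i),\dist(w_j,w_i),\dist(z_j,w_i),\dist(z_i,w_j)$ are all comparable up to a factor of $4$, so we may replace each of them by a single representative $d_{ji}$ in the product at the cost of a multiplicative constant depending only on $\ell+k$.

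After this reduction, counting the powers of $\dist(z_j,w_j)$, $d_{jk}$ etc.\ in the four products defining $L_\dist(T,S)$, one obtains
$$
L_\dist(T,S) \;\asymp\; \prod_{j\in I_{++}\cup I_{--}} a_j \cdot \prod_{\{j,k\}\text{ same type}} d_{jk}^{2} \cdot \prod_{(j,k)\in I_{++}\times I_{--}} d_{jk}^{-2},
$$
so the only factors that can blow up are $d_{jk}^{-2}$ for opposite-type pairs (all other factors are $\le 1$).  For indices $j,k>\ell$ the $\epsilon$-separation \eqref{pair_2} gives $d_{jk}\ge \epsilon$, contributing at most $\epsilon^{-O((k+\ell)^2)}$ and these can be pulled out.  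What remains is to bound the product involving matched pairs.  The key input is the pair-configuration estimate $d_{jk}\ge\max(a_j,a_k)$ for $j,k\le\ell$, which, combined with $d_{jk}\le 1$, yields both $d_{jk}^2\le \min(a_j,a_k)\cdot d_{jk}$ and, crucially, $a_ja_k\le d_{jk}^2$ for each opposite-type pair.

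The main step, and the one I expect to be the main obstacle, is the combinatorial matching: we must match each $d_{jk}^{-2}$ factor ($j\in I_{++},k\in I_{--}$) to a collection of $a$-factors and same-type $d^2$-factors whose total weight dominates it.  The naive distribution $d_{jk}^{-2}\le(a_ja_k)^{-1}$ fails when $|I_{++}|$ or $|I_{--}|$ exceeds one, so I would argue instead by induction on $\ell$: choose a pair $j^*\le \ell$ minimizing $a_{j^*}$ and remove it together with a suitably chosen partner.  If $j^*\in I_{+-}\cup I_{-+}$ (split type), the triangle estimates above give $L_\dist(T,S)/L_\dist(\tilde T,\tilde S)\le 2^{\ell+k}$ directly, since the numerator and denominator each pick up comparable factors involving $z_{j^*}$ and $w_{j^*}$.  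In the remaining case, where every matched pair is of type $++$ or $--$, one simultaneously removes one pair from $I_{++}$ and one from $I_{--}$, using $d_{j^*k^*}^{2}\ge \max(a_{j^*},a_{k^*})^2$ to absorb the two newly-exposed $a$-factors against the single $d_{j^*k^*}^{-2}$ factor, while the triangle comparisons $\dist(z_{j^*},\cdot)\asymp\dist(w_{j^*},\cdot)$ and $\dist(z_{k^*},\cdot)\asymp\dist(w_{k^*},\cdot)$ (valid precisely because $j^*,k^*\le\ell$) show that all the cross-terms telescope into an expression of the form $L_\dist(\tilde T,\tilde S)$ times a factor bounded by a constant depending on $\ell+k$ only.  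The induction thus terminates with the base case $\ell=0$ handled by \eqref{pair_2}, giving the desired uniform constant $C=C(\epsilon,\ell,k)$.
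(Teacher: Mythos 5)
The overall plan---partition the indices by type $(I_{++}, I_{+-}, I_{-+}, I_{--})$, reduce $L_\dist$ to a product over pairs, and then induct by removing matched pairs one or two at a time---runs parallel to the paper's graph-theoretic argument (which uses the cost functions $\rho, \widehat\rho$ on the complete graph on $\mathfrak{U}$). However, the key technical estimate your reduction rests on is false. You claim that for $j\le\ell$ the four distances $\dist(z_j,z_i),\dist(w_j,w_i),\dist(z_j,w_i),\dist(z_i,w_j)$ are all comparable up to a factor of $4$. Condition \eqref{pair_1} gives only the \emph{one-sided} bounds $\dist(z_j,w_i)\le 2\dist(w_j,w_i)$ and $\dist(z_i,w_j)\le 2\dist(z_j,z_i)$; the reverse inequalities fail. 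Concretely, take $(z_1,w_1)=(-1,0)$, $(z_2,w_2)=(0,1)$, $(z_3,w_3)=(1.01,2)$ in $\R$ (read $\dist$ as Euclidean distance for a local model): one checks that all the inequalities $a_j\le\dist(z_i,z_j)\wedge\dist(w_i,w_j)$ hold, yet $\dist(z_3,w_2)=0.01$ while $\dist(w_3,w_2)=1$, $\dist(z_3,z_2)=1.01$. So the ``single representative $d_{ji}$'' step breaks down: the factorization can only be claimed as a one-sided $\ll$, and the numerator factors $\dist(z_j,w_k)\dist(z_k,w_j)$ may be much smaller than $d_{jk}^2$, which matters because they are genuinely needed for the cancellation in the $I_{++}\times I_{--}$ block.

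This same issue undermines the final induction step. The claim that the two-sided comparisons $\dist(z_{j^*},\cdot)\asymp\dist(w_{j^*},\cdot)$ hold ``precisely because $j^*,k^*\le\ell$'' is not true---again only one direction follows from \eqref{pair_1}. The paper's proof circumvents this by choosing the pair $(\u_1,\u_2)$ with $\u_1\in\mathfrak{A}=I_{++}\cap[\ell]$, $\u_2\in\mathfrak{A}^*=I_{--}\cap[\ell]$ \emph{minimizing} $\dist(w_1,w_2)$, rather than minimizing $a_{j^*}$; this extra minimality is what makes the cross-term comparisons workable when combined with the precise placement of each factor in numerator vs.\ denominator. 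Your proposal's choice of minimizing $a_{j^*}$ gives no such leverage, and the ``telescoping'' assertion is left as an unjustified claim that, as stated, conflicts with the counterexample above. Also note a sign slip: from $d_{jk}\ge\max(a_j,a_k)$ and $d_{jk}\le 1$ one gets $d_{jk}^2\ge\max(a_j,a_k)\,d_{jk}\ge a_ja_k$, not $d_{jk}^2\le\min(a_j,a_k)\,d_{jk}$. In short: the skeleton of the argument is in the right spirit, but the comparability lemma driving it is incorrect and the induction step needs the paper's more delicate choice of which pair to remove and a careful accounting, using only one-sided triangle-inequality bounds, of exactly which distances land in the numerator versus the denominator.
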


\proof
In this proof, we will use the notation $d_1 \ll  d_2$ for the existence of a constant $c>0$ which may depend on $\epsilon>0,\ell>0,$ and $k > 0$ so that
$$ 0< d_1 < c\cdot d_2 , $$
and  $d_1 \asymp d_2$ if both $d_1 \ll d_2$ and $d_1 \gg d_2$.
Let us reformulate the problem by introducing the complete graph $\mathcal{G}$ with vertex set $\mathfrak{U}$ including a loop attached to each vertex. We will denote $\u_j = (z_j, w_j)$ the vertices and $\u_i \u_j$ the edges of $\mathcal{G}$.  Given the sets $T \subseteq Z$, $S\subseteq W$, we define for all $i\neq j$, 
\begin{align*}
 \rho(\u_i\u_j) &= \dist(z_i, z_j)^{\1_{(z_i, z_j) \in T \times T^* \cup T^*\times T }} 
 \dist(w_i, w_j)^{\1_{(w_i, w_j) \in S \times S^* \cup S^*\times S}} , \\
  \widehat{\rho}(\u_i\u_j)& = \dist(z_i, w_j)^{\1_{(z_i, w_j) \in T \times S \cup T^* \times S^* }} 
 \dist(z_j, w_i)^{\1_{(z_j, w_i) \in T \times S \cup T^* \times S^*}} , 
\end{align*}
as well as $ \rho(\u_i\u_i)=1$ and $  \widehat{\rho}(\u_i\u_i) = \dist(z_i, w_i)^{\1_{(z_i, w_i) \in T \times S \cup T^* \times S^*}}$. 
In the following, we interpret $\rho,  \widehat{\rho} : E \to \R_+$ as cost functions defined on the edges of the graph $\mathcal{G}$ such that
\begin{equation*}
 L_\dist(T, S) = \prod_{ i, j  \in [k+\ell]}\frac{ \widehat{\rho}(\u_i\u_j) }{\rho(\u_i\u_j) } . 
 \end{equation*}

Hence, the game is to show that
$$ 
\prod_{ i, j  \in [k+\ell]}\widehat{\rho}(\u_i\u_j)  \ll \prod_{ i, j  \in [k+\ell]} \rho(\u_i\u_j) . $$
We will proceed by estimating the contribution from the different types of edges step by step.
Let us first consider the set of vertices
$$\mathfrak{W}=\big\{ \u_j \in T \times S^* \cup T^* \times S : 1 \le j \le \ell \big\}.    $$
We claim that if  $\u_1 \in \mathfrak{W}$, then 
\begin{equation}\label{estimate_1}
\prod_{j=1}^{k+l} \frac{ \widehat{\rho}(\u_1\u_j) }{\rho(\u_1\u_j) } \ll 1 . 
 \end{equation}

Without loss of generality, we assume that  $\u_1\in T\times S^*$. Then, for any $j\in [k+\ell]$, we have
$$
\rho(\u_1\u_j)  = 
\begin{cases}
\dist(w_1, w_j) &\text{if }  \u_j \in T \times S \\
\dist(z_1, z_j) &\text{if }  \u_j \in T^* \times S^* \\  
\dist(z_1, z_j) \dist(w_1, w_j)   &\text{if }  \u_j \in T^* \times S \\
1 &\text{else}
\end{cases}
$$
and 
$$
 \widehat{\rho}(\u_1\u_j) =
\begin{cases}
\dist(z_1, w_j) &\text{if }  \u_j \in T \times S \\
\dist(z_j, w_1) &\text{if }  \u_j \in T^* \times S^* \\  
\dist(z_1, w_j) \dist(z_j, w_1)   &\text{if }  \u_j \in T^* \times S \\
1 &\text{else}
\end{cases} . 
$$

Using the condition  \eqref{pair_1} and the triangle inequality:
\begin{align*}
&\dist(z_1, w_j) \le \dist(z_1, w_1)  + \dist(w_1, w_j) \le 2\dist(w_1, w_j) \\
&\dist(z_j, w_1) \le \dist(z_j, z_1)+ \dist(z_1, w_1) \le 2 \dist(z_1, z_j) .
\end{align*}
This establishes that $\widehat{\rho}(\u_1\u_j) \ll \rho(\u_1\u_j) $ and we obtain formula \eqref{estimate_1}. 

So, if $\mathcal{G}' =( \mathfrak{U}', E')$ is the complete graph (including all the loops) with vertex-set $\mathfrak{U}' :=\mathfrak{U} \backslash \mathfrak{W}$, formula \eqref{estimate_1} implies that 
\begin{equation}\label{estimate_2}
\prod_{ i, j  \in [k+\ell]}\frac{ \widehat{\rho}(\u_i\u_j) }{\rho(\u_i\u_j) } 
\ll   \prod_{\u_i\u_j \in E'} \frac{ \widehat{\rho}(\u_i\u_j) }{\rho(\u_i\u_j) }   . 
\end{equation}

The next reduction step is more sophisticated. Let us split the vertex-set $\mathfrak{U}'$ in  three categories:
$$
 \mathfrak{A} = \big\{ \u_j \in T \times S : 1 \le j \le \ell \big\} \ , \hspace{.8cm}
 \mathfrak{A}^* = \big\{ \u_j \in T^* \times S^* : 1 \le j \le \ell \big\} \ ,
 $$
and $ \mathfrak{V}=\big\{ \u_j :   \ell < j \le \ell + k\} $ so that the edges of $\mathcal{G}'$ are decomposed as
\begin{equation} \label{pair_4}
E'=  \big((\mathfrak{A} \cup \mathfrak{A}^*)\times (\mathfrak{A} \cup \mathfrak{A}^*)  \big) \cup \big((\mathfrak{A} \cup \mathfrak{A}^*)\times  \mathfrak{V}  \big) \cup 
\big( \mathfrak{V} \times \mathfrak{V} \big) . 
\end{equation}

We will proceed by induction to estimate the contribution coming from two vertices $\u_1\in \mathfrak{A}$ and $\u_2\in \mathfrak{A}^*$. Without loss of generality, 
 $|\mathfrak{A}| \ge |\mathfrak{A}^*| $ and provided that $\mathfrak{A}^*$ is not empty, we choose $\u_1$ and $\u_2$ so that
\begin{equation}\label{pair_3}
\dist(w_1,w_2) = \min \big\{ \dist(w_i,w_j) : \u_i \in \mathfrak{A},  \u_j\in \mathfrak{A}^*    \big\} . 
\end{equation}
We will first compare the quantities
 $$
 \prod_{\begin{subarray}{c} \u_j\in \mathfrak{U}'\backslash \mathfrak{V} \\
j >2 \end{subarray}}   \rho(\u_1\u_j)\rho(\u_2\u_j)
=   \prod_{\begin{subarray}{c} \u_j\in \mathfrak{A} \\ j\neq 1 \end{subarray}}   \dist(z_2, z_j) \dist(w_2, w_j)  \prod_{\begin{subarray}{c} \u_j\in \mathfrak{A}^*  \\ j\neq 2 \end{subarray}}
\dist(z_1, z_j) \dist(w_1, w_j)  
$$
and 
\begin{align*}
 \prod_{\begin{subarray}{c} \u_j\in \mathfrak{U}'\backslash \mathfrak{V} \\
j >2 \end{subarray}}  \widehat{\rho}(\u_1\u_j)\widehat{\rho}(\u_2\u_j)  
&  = \prod_{\begin{subarray}{c} \u_j\in \mathfrak{A} \\ j\neq 1 \end{subarray}} \dist(z_1, w_j) \dist(z_j,w_1)  
 \prod_{\begin{subarray}{c} \u_j\in \mathfrak{A}^*  \\ j\neq 2 \end{subarray}} \dist(z_2, w_j) \dist(z_j, w_2)  .
 \end{align*}

A straightforward consequence of conditions \eqref{pair_1} and \eqref{pair_3}  is that
\begin{align*}
 & \dist(w_2, z_j) \asymp  \dist(w_1, w_j)
  \hspace{.4cm}\text{and}  \hspace{.4cm}
   \dist(z_2, w_j) \asymp  \dist(z_1, z_j) 
     \hspace{.5cm}  \forall\ \u_j \in \mathfrak{A}^*   , \\
     & \dist(w_1, z_j) \asymp  \dist(w_2, w_j) 
\hspace{.5cm}\text{and}\hspace{.4cm}  
\dist(z_1, w_j) \asymp \dist(z_2, z_j)
  \hspace{.4cm}  \forall\ \u_j \in \mathfrak{A}  .
\end{align*}
Hence, we obtain
\begin{equation} \label{estimate_6}
 \prod_{\begin{subarray}{c} \u_j\in \mathfrak{U}'\backslash \mathfrak{V} \\
j >2 \end{subarray}}  \widehat{\rho}(\u_1\u_j)\widehat{\rho}(\u_2\u_j)  
\asymp
 \prod_{\begin{subarray}{c} \u_j\in \mathfrak{U}'\backslash \mathfrak{V} \\
j >2 \end{subarray}}  \rho(\u_1\u_j)\rho(\u_2\u_j)  . 
\end{equation}

Now, define the points $z_\#$ and $w_\#$ by 
\begin{equation*}
\dist(z_1,z_\#)= \min\big\{ \dist(z_1,z_j) : \u_j \in \mathfrak{V} \big\} 
\hspace{.5cm}\text{and}\hspace{.5cm} 
\dist(w_1,w_\#)= \min\big\{ \dist(w_1,w_j) : \u_j \in \mathfrak{V} \big\} .
\end{equation*}
Because of the separation conditions \eqref{pair_2},  only one of these two distances
can be smaller than $\epsilon/3$. 
Without loss of generality, let  us assume that it is attained at $z_\#$ and that $z_\#\in T$, in which case we have
\begin{equation}\label{estimate_5}
\prod_{u_j \in \mathfrak{V}}\rho(\u_1\u_j)\rho(\u_2\u_j)  
\asymp \dist(z_2,z_\#). 
\end{equation}

If $ \dist(w_1, z_\#)\le 2 \dist(z_2,z_\#) $, by \eqref{pair_1}, we have
\begin{equation}\label{estimate_7}
\dist(w_1, z_\#) \dist(z_1, w_1) \dist(z_2, w_2) \le 2\dist(z_2, z_\#) \dist(z_1, z_2) \dist(w_1, w_2) 
\end{equation}
On the other hand,  if $\dist(z_2,z_\#)< \dist(w_1, z_\#)/2$, by the triangle inequality
$$
 \dist(w_1, z_\#) \le \dist(z_\#, z_2) + \dist(z_2, z_1) +  \dist( z_1, w_1)   
 $$
and 
$$
 \dist(w_1, z_\#) \le 4 \dist(z_2, z_1) . 
$$
This implies that the estimate  \eqref{estimate_7} still holds with an extra factor of $2$. The bottom line is that since
$$
 \rho(\u_1\u_1)  \rho(\u_2\u_2) \rho(\u_1\u_2)
 =\dist(z_1,z_2)    \dist(w_1,w_2) 
$$
and
$$
\widehat{\rho}(\u_1\u_1) \widehat{\rho}(\u_2\u_2) \widehat{\rho}(\u_1\u_2)\widehat{\rho}(\u_1\u_3)\widehat{\rho}(\u_2\u_3)  
=  \dist(w_1, z_\#) \dist(z_1, w_1) \dist(z_2, w_2)  .
$$
By formula \eqref{estimate_5}, this implies that
\begin{equation*}
\widehat{\rho}(\u_1\u_1) \widehat{\rho}(\u_2\u_2) \widehat{\rho}(\u_1\u_2)\widehat{\rho}(\u_1\u_3)\widehat{\rho}(\u_2\u_3)  
\ll \rho(\u_1\u_1)  \rho(\u_2\u_2) \rho(\u_1\u_2) \prod_{u_j \in \mathfrak{V}}\rho(\u_1\u_j)\rho(\u_2\u_j)  .
\end{equation*}
Combined with formula  \eqref{estimate_6}, this shows that 
$$
 \prod_{ \u_j\in \mathfrak{U}' }  \widehat{\rho}(\u_1\u_j) 
  \prod_{\begin{subarray}{c} \u_j\in \mathfrak{U}'\\
j >1\end{subarray}} \widehat{\rho}(\u_2\u_j) 
\ll \prod_{ \u_j\in \mathfrak{U}' }  \rho(\u_1\u_j) 
  \prod_{\begin{subarray}{c} \u_j\in \mathfrak{U}'\\
j >1\end{subarray}} \rho(\u_2\u_j) . 
$$
Hence, we can disregard the full contribution  of the vertices $\u_1, \u_2$.  
By induction, we can repeat this procedure until the set  $\mathfrak{A}^* = \emptyset$. In this case, by \eqref{pair_4}, we are left with the edge-set
\begin{equation} \label{pair_5}
E'=  \big(\mathfrak{A}\times \mathfrak{A}   \big) \cup \big(\mathfrak{A} \times  \mathfrak{V}  \big) \cup 
\big( \mathfrak{V} \times \mathfrak{V} \big) . 
\end{equation}

By definition, 
\[
\prod_{\u_i\u_j \in \mathfrak{A}\times \mathfrak{A}}\rho(\u_i\u_j)   = 1 , 
\]
and using the separation conditions \eqref{pair_2},  we  have
\[
\prod_{\u_i\u_j \in  \mathfrak{V} \times \mathfrak{V} } \rho(\u_i\u_j)  \gg 1 . 
\]
Thus, by formula \eqref{estimate_2}, we have proved that 
\begin{equation}\label{estimate_3}
\prod_{ i, j  \in [k+\ell]}\frac{ \widehat{\rho}(\u_i\u_j) }{\rho(\u_i\u_j) } 
\ll  \prod_{\u_i\u_j \in \mathfrak{A} \times \mathfrak{A}} \widehat{\rho}(\u_i\u_j) 
 \prod_{\u_i\u_j \in \mathfrak{A} \times \mathfrak{V}} \frac{ \widehat{\rho}(\u_i\u_j) }{\rho(\u_i\u_j) }  . 
\end{equation}

Let $\u_1 \in  \mathfrak{A}  $ and  $\u_2 \in  \mathfrak{V}  $. 
Since  $\dist(z_2, w_2) \ge \epsilon$ and $\dist(z_1, w_1) \le  \dist(z_1, z_2) \wedge  \dist(w_1, w_2)   $, we must have 
\[
\dist(z_1, z_2) \ge \epsilon/3 
  \hspace{.5cm}\text{or}  \hspace{.5cm}
\dist(w_1, w_2) \ge \epsilon/3 .
\] 
Moreover, also because of the separation condition \eqref{pair_2}, there is at most one point $\u_2 \in  \mathfrak{V}  $ such that one of this condition is not true. 
This implies that
\begin{align*}
 \prod_{ \u_j\in \mathfrak{V} }  \rho(\u_1\u_j) & \gg \dist(z_1, z_2) \wedge
\dist(w_1, w_2) \\
&\gg \dist(z_1, w_1) =  \widehat{\rho}(\u_1\u_1) .
\end{align*}
So, we have proved that for any vertex $\u_1 \in  \mathfrak{A}  $, we have
\begin{equation}\label{estimate_4}
 \widehat{\rho}(\u_1\u_1) 
 \prod_{\u_j \in \mathfrak{V}} \frac{ \widehat{\rho}(\u_1\u_j) }{\rho(\u_1\u_j) }
 \ll 1  .
\end{equation}
Combining the estimates \eqref{estimate_3} and \eqref{estimate_4}, this completes the proof of the lemma. \qed \\

\section{Upper bound}
\label{sec:ub}

In this section we prove the upper bound in Theorem~\ref{thm:main}.  
\begin{theorem} 
Assume that $V$ is real analytic and regular. Then, there exists a constant $C_V>0,$ which depends only on $V$, 
so that for any sequence $y_N \to \infty$ as $N\to\infty$, 
   \[ 
     \liminf_{N\to\infty}\Pr\left[\sup_{q\in \C} \EF(q) \le \log N + \tfrac{1}{2}\log\log N + C_Vy_N  \right]
 = 1.
  \]
  \label{prop:absub}
\end{theorem}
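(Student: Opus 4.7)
The plan is to combine the subharmonicity of $q \mapsto \log|\det(q-A_N)|$ on $\C$ with the exponential moment estimate of Corollary~\ref{prop:expbound}. My first step would be to reduce the maximum over $[-1,1]$ to one over a horizontal line at height $1/N$. For any $q_0 \in \R$, any eigenvalue $\lambda$, and $\eta>0$, the elementary inequality $\log|q_0 - \lambda| \le \log|q_0 + i\eta - \lambda|$ yields, after summing over eigenvalues and using that $N\widetilde g$ has oscillation at most $L = \mathcal{O}(1)$ on any $1/N$-neighborhood (by the Lipschitz hypothesis on $\widetilde g$),
\[
  \EF(q_0) \;\le\; \EF(q_0 + i/N) \,+\, L.
\]
Hence $\max_{q \in [-1,1]} \EF(q) \le \max_{x \in [-1,1]} \EF(x + i/N) + L$, which reduces the problem to controlling $\EF$ on a segment at distance $1/N$ from $\R$.

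To handle the latter, I would partition the segment $\{x + i/N : x \in [-1,1]\}$ into $\mathcal{O}(N)$ sub-arcs $\gamma_j$ of length $\asymp 1/N$. For each $\gamma_j$, the subharmonic function $|\det(\cdot - A_N)|^2$ satisfies, with $r = 1/(2N)$,
\[
  \sup_{z \in \gamma_j} |\det(z - A_N)|^2 \;\le\; \frac{1}{\pi r^2} \int_{D(\gamma_j, r)} |\det(w-A_N)|^2 \, dA(w),
\]
where the thickening $D(\gamma_j, r)$ has area $\mathcal{O}(N^{-2})$ and satisfies $|\Im w| \ge 1/(2N)$. Taking expectations, using the identity $\E[|\det(w-A_N)|^2] = e^{-2N\widetilde g(w)} \E[e^{2\EF(w)}]$, and inserting the bound of Corollary~\ref{prop:expbound} with $|\Im w| \asymp 1/N$ (with the deterministic factor $e^{-2N\widetilde g(w)}$ absorbed up to a bounded multiplicative error by Lipschitz continuity of $\widetilde g$), yields
\[
  \E\!\left[\sup_{z \in \gamma_j} e^{2\EF(z)}\right] \;\le\; C\, N \, \mathrm R(\gamma_j)^2.
\]
Markov's inequality and a union bound over the $\mathcal{O}(N)$ sub-arcs give
\[
  \Prob\!\left[\max_{x \in [-1,1]} \EF(x + i/N) > t \right] \;\le\; C\, e^{-2t}\! \sum_{j}\! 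N \, \mathrm R(\gamma_j)^2.
\]
For all but $\mathcal{O}(1)$ sub-arcs one has $\mathrm R(\gamma_j)^2 = \mathcal{O}(1)$, so these contribute $\mathcal{O}(N^2 e^{-2t})$. Setting $t = \log N + C_V y_N$ with $C_V \ge 2$ large enough so that $C_V y_N > y_N + L$ eventually, this contribution vanishes as $y_N \to \infty$.

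\emph{Main obstacle.} The delicate step is keeping the singular factor $\mathrm R$ from destroying the union bound near the endpoints of $\mathbf J$. The regularity assumption on $V$ forces $\LSD$ to vanish only like a square root at these endpoints, so $\mathrm R(z)^2 \lesssim |z-\text{endpoint}|^{-1/2} \lesssim N^{1/2}$ on the $\mathcal{O}(1)$ sub-arcs lying within distance $\mathcal{O}(1/N)$ of an endpoint. This yields a contribution of order $N^{3/2} e^{-2t}$ from the edge arcs, which is still $o(1)$ for $t = \log N + C_V y_N$. This is the only place where the full strength of the regularity hypothesis is used; without it one would need a more careful local (Airy-type) analysis near the singular endpoints, which the authors point out elsewhere in the paper but prefer to avoid here for the sake of a cleaner upper-bound proof.
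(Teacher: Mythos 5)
Your proof is correct, but it takes a genuinely different route from the paper's. The paper discretizes the maximum deterministically: it invokes a Chebyshev/Bernstein-type polynomial interpolation inequality (quoted as Lemma 4.3 of~\cite{CMN}) to bound $\max_{[-1,1]}|P_N|$ by $14\max_k |P_N(x_k)|$ at the $2N+1$ Chebyshev-type nodes $x_k$, then shows $\EF(x_k)\leq\EF(x_k-iy/N)+C_V y$ point by point before applying Markov and a union bound over the discrete set. You instead move off the real line by $1/N$, partition the horizontal segment into $O(N)$ arcs, and control the sup over each arc by applying the sub-mean-value inequality to the subharmonic function $w\mapsto|\det(w-A_N)|^2$ over a thickened disc in $\{|\Im w|\geq 1/(2N)\}$; the expectation of that area integral is then bounded by Corollary~\ref{prop:expbound}. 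Both routes rely on Corollary~\ref{prop:expbound} and produce the same $O(N^2 e^{-2t})$ union bound, and both must absorb the deterministic factor $e^{2N\widetilde g}$ via Lipschitz continuity of $\widetilde g$; what the paper's route buys is that the discretization is purely deterministic (a statement about polynomials, no probability), whereas your route trades the polynomial inequality for a subharmonicity argument that directly controls $\E[\sup_{\gamma_j}e^{2\EF}]$. One small imprecision: your claim that ``for all but $O(1)$ sub-arcs one has $\mathrm{R}(\gamma_j)^2 = O(1)$'' is an oversimplification — the arcs at intermediate distance $\delta\in[1/N,1]$ from an endpoint have $\mathrm{R}^2\asymp\delta^{-1/2}$, and one needs the full summation $\sum_j\mathrm{R}(\gamma_j)^2\asymp N^{1/2}\sum_{k\leq N}k^{-1/2}=O(N)$ (exactly as the paper computes) to see that the total is $O(N^2 e^{-2t})$; your edge-arc bookkeeping understates the contribution of this intermediate regime, though the final bound is still correct.
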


We begin by giving a completely deterministic relaxation of the problem.
Throughout this section, we will use 
\begin{equation} \label{tilde_g_function} 
      \widetilde{g}(x) := \int_\R \log|x- u|\,\LSD(du),
\end{equation} 
to denote the log-potential of the equilibrium measure.
\begin{lemma}
  For any $N\in\N$, the function $\EF(q)$ is subharmonic in $\C\backslash \mathbf{J}$ and, almost surely,
\[ 
  \sup_{q\in \C} \EF(q) = \max_{q\in {\mathbf{J}}}  \EF(q) \vee 0 .  
\]
\end{lemma}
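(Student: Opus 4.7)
The plan is to handle the two claims separately, with subharmonicity coming from the structure of $\EF$ and the identification of the supremum coming from a maximum-principle argument together with the decay of $\EF$ at infinity.

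For subharmonicity, I would decompose
\[
\EF(q) = \underbrace{\sum_{i=1}^N \log|q-\lambda_i|}_{=:U_1(q)} - \underbrace{N\int_\R \log|q-x|\,\LSD(dx)}_{=:U_2(q)}.
\]
The function $U_1$ is subharmonic on all of $\C$ as a finite sum of functions $\log|q-\lambda_i|$, each the logarithm of the modulus of a (non-identically-zero) holomorphic function. The function $U_2$, being $-N$ times the logarithmic potential of a measure supported on $[-1,1]$, is harmonic on $\C\setminus[-1,1]$ (differentiation under the integral is justified since $q\mapsto\log|q-x|$ is harmonic away from $x$ and we stay a definite distance from $\supp\LSD\subset[-1,1]$). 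Hence $\EF=U_1-U_2$ is subharmonic on $\C\setminus[-1,1]$. I would also record here that $\EF$ is upper semicontinuous on all of $\C$: $U_1$ is u.s.c.\ since each $\log|q-\lambda_i|$ is, and $U_2$ is actually continuous on $\C$ because $\LSD$ has bounded density, making the log-potential continuous.

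For the identification of the sup, the key input is the asymptotic behavior at infinity. Expanding, for $|q|$ large,
\[
U_1(q) = \sum_{i=1}^N\log|q| + \sum_{i=1}^N\log\bigl|1 - \tfrac{\lambda_i}{q}\bigr| = N\log|q| + O(|q|^{-1}),
\]
and similarly $U_2(q)=N\log|q|+O(|q|^{-1})$ using $\int\LSD=1$ and the boundedness of $\supp\LSD$. Subtracting, $\EF(q)\to 0$ as $|q|\to\infty$. Now apply the maximum principle for subharmonic functions on the bounded open set $\Omega_R := B(0,R)\setminus[-1,1]$, whose boundary is $[-1,1]\cup\partial B(0,R)$. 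By upper semicontinuity of $\EF$ on $\C$ and the maximum principle,
\[
\sup_{\Omega_R}\EF \;\le\; \max\!\left(\,\max_{q\in[-1,1]}\EF(q),\ \sup_{|q|=R}\EF(q)\right).
\]
Letting $R\to\infty$ and using $\EF(q)\to 0$, the second term in the maximum tends to $0$, yielding
\[
\sup_{\C\setminus[-1,1]}\EF \;\le\; \max_{q\in[-1,1]}\EF(q)\,\vee\,0.
\]
Combining with the trivial lower bound $\sup_\C \EF \ge \max_{[-1,1]} \EF$ and $\sup_\C\EF\ge \limsup_{|q|\to\infty}\EF(q)=0$ (forced by $\EF\to 0$) gives the claimed equality.

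The only mildly delicate point is ensuring the maximum principle applies cleanly despite $[-1,1]$ being a set of zero planar measure and $\EF$ potentially taking the value $-\infty$ at eigenvalues lying in $[-1,1]$; this is why I prefer the upper-semicontinuous form of the maximum principle (which only needs $\limsup_{z\to\zeta}\EF(z)\le \EF(\zeta)$ at boundary points $\zeta$) rather than a pointwise boundary-value version. Once that is invoked, everything is straightforward.
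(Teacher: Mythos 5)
Your proof is correct and follows essentially the same route as the paper: decompose $\EF$ into the empirical log-potential (subharmonic on $\C$) minus the equilibrium log-potential (harmonic off the support of $\LSD\subset[-1,1]$), observe $\EF(q)\to 0$ as $|q|\to\infty$ because the two pieces share the $N\log|q|$ leading term, and conclude via the maximum principle on large disks with $[-1,1]$ removed. The paper's version is terser and merely notes subharmonicity holds on the slightly larger set $\C\setminus\supp\LSD$; your added attention to upper semicontinuity and the precise form of the maximum principle is clarification rather than a genuinely different argument.
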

\begin{proof}
  This fact that $\EF(q)$ is subharmonic follows directly from the fact that $\tilde{g}(x)$ is harmonic $\C\backslash \mathbf{J}$ and $\log |P_N(q)|$ is subharmonic on all $\C.$
 Hence the maximum of $\EF$ is attained either on $\mathbf{J}$ or along a sequence of points going to $\infty.$  However, we may write
    \[
	\EF(q) =
        \int_\R \log|1-uq^{-1}|\,\ESD(du)
        -N\int_\R \log|1-uq^{-1}|\,\LSD(du).
    \]
    Hence, $\EF(q) \to 0$ as $q \to \infty,$ and so the lemma follows.
\end{proof}

Next, on the account that $\EF$ arises as the modulus of a polynomial, we can reduce the task of bounding $\EF$ on $[-1,1]$ to bounding $\EF$ on a deterministic set of cardinality $O(N),$ by losing only an absolute constant. To do so, we may use the following reformulation of Lemma 4.3 in~\cite{CMN}. 

\begin{lemma}
  Fix $N \in \mathbb{N}.$
  Let $P_N$ be a polynomial of degree $N.$  Then
  \[
    \max_{x \in [-1,1]} |P_N(x)| 
    \leq 
    14 
    \max_{k \in [2N+1]} |P_N(x_k)| ,
  \]
  where $x_k= \cos( \pi (k-1)/2N)$ for $k \in [2N+1] := \{ 1, \dots , 2N+1\}$.   
 \label{lem:CMN}
\end{lemma}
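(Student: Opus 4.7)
The plan is to reduce the problem to a statement about trigonometric polynomials via the substitution $x=\cos\theta$, and then apply Bernstein's inequality. Setting $Q(\theta) := P_N(\cos\theta)$, the function $Q$ is a trigonometric polynomial of degree at most $N$ (since each $\cos^k\theta$ lies in the span of $\{\cos(j\theta) : 0 \leq j \leq k\}$). Under this substitution the evaluation nodes become $\theta_k = \pi(k-1)/(2N)$, $k=1,\dots,2N+1$, which partition $[0,\pi]$ into $2N$ subintervals of common length $\pi/(2N)$. In particular, every $\theta \in [0,\pi]$ lies within distance $\pi/(4N)$ of some node $\theta_k$, and via $x = \cos\theta$ every $x \in [-1,1]$ sits close to some $x_k$.

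Next I would invoke Bernstein's inequality for trigonometric polynomials of degree $N$, giving $\|Q'\|_\infty \leq N \|Q\|_\infty$. Writing $M := \max_{x \in [-1,1]} |P_N(x)| = \|Q\|_\infty$ and $M^* := \max_k |P_N(x_k)| = \max_k |Q(\theta_k)|$, the fundamental theorem of calculus yields, for any $\theta$ with nearest node $\theta_k$,
\[
|Q(\theta)| \leq |Q(\theta_k)| + \frac{\pi}{4N}\|Q'\|_\infty \leq M^* + \tfrac{\pi}{4} M.
\]
Taking the supremum over $\theta \in [0,\pi]$ and rearranging gives $M \leq \frac{4}{4-\pi} M^*$, which is well under $14 M^*$ (the constant in the statement is clearly loose; any bound strictly larger than $4/(4-\pi) \approx 4.66$ would suffice).

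There is essentially no substantive obstacle here: the argument is entirely classical, and the only nontrivial input beyond elementary calculus is Bernstein's inequality for trigonometric polynomials. The most delicate point to record explicitly is simply that the Chebyshev-type nodes $\theta_k$ are equispaced after the change of variables, so the half-spacing $\pi/(4N)$ is exactly what Bernstein needs to absorb.
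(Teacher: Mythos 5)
Your proof is correct, but it takes a genuinely different route from the paper's. The paper does not re-prove a discretization bound from scratch: it cites \cite{CMN} Lemma 4.3 (which states that an algebraic polynomial $W_n$ of degree $n$ satisfies $\max_{|\omega|=1}|W_n(\omega)| \leq 14\max_k|W_n(e^{\pi i k/n})|$) and transfers it from the unit circle to the interval by setting $W_{2N}(\omega)=P_N(J(\omega))\,\omega^N$, which is a degree-$2N$ polynomial satisfying $|W_{2N}(e^{i\theta})|=|P_N(\cos\theta)|$. The constant $14$ is simply inherited from that cited result. Your argument is instead self-contained: reduce to the trigonometric polynomial $Q(\theta)=P_N(\cos\theta)$ of degree $N$, apply Bernstein's inequality $\|Q'\|_\infty \le N\|Q\|_\infty$, and observe that the nodes $\theta_k=\pi(k-1)/(2N)$ are equispaced with half-spacing $\pi/(4N)$, so $M\leq M^*+\tfrac{\pi}{4}M$, i.e.\ $M\leq \tfrac{4}{4-\pi}M^*\approx 4.66\,M^*$. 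That is sharper than $14$ and entirely elementary (you only need Bernstein, and the evenness/periodicity of $Q$ to justify that $\|Q\|_{L^\infty(\mathbb{R})}$ is attained on $[0,\pi]$). The paper's route is shorter in context because \cite{CMN} is already in its bibliography and the Joukowsky transfer is consonant with how the paper moves between $[-1,1]$ and the circle throughout, but your direct Bernstein argument is equally valid and arguably cleaner as a standalone proof.
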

\begin{proof}
The bound is an immediate consequence of \cite[Lemma 4.3]{CMN} which states that for any polynomial $W_N$ of degree at most $2N\in \N$,
\begin{equation*} \label{eq:CMN}
    \max_{|\omega|=1} |W_N(\omega)| 
    \leq 
    14 
    \max_{k \in [4N]} |W_N(e^{\pi i k/(2N)})|. 
\end{equation*}
If we let 
\[
    W_{2N}(\omega) = P_N(J(\omega))\omega^{N},
  \]
  then $W_{2N}$ is a polynomial of degree $2N$ such that for any $|\omega| = 1$, 
  \[
    |W_{2N}(\omega)| 
    =|P_N(J(\omega))|. 
  \]  
  This implies the claim, noting that as $J$ double-covers the unit interval it suffices to take $\omega \in \{e^{\pi i (k-1)/(2N)} : k \in [2N+1]\}.$ 
\end{proof}

This allows us to give a deterministic interpolation bound which applies to the
recentered log potential $\EF.$ 
\begin{corollary}   \label{cor:chbyshevbound}
  Let $V$ be real analytic.
  There is a constant $C_V$,  so that for any $N \in \N$ there is a deterministic set $\left\{ w_k \right\} \subset \overline{\mathbf{J}}$ of cardinality at most $C_V\!\cdot\!N$ so that
  for any polynomial $P_N$ of degree $N \ge m$, 
  \[
    \sup_{x\in \mathbf{J}}  \bigl\{ \log | P_N(x) | - N \tilde{g}(x)\bigr\}
    \le  \max_{k}  \bigl\{ \log | P_N(w_k) | - N \tilde{g}(w_k)\bigr\}
    +\log(42).
  \]
\end{corollary}

\begin{proof} 
  We can bound interval-by-interval of the support $\mathbf{J}$ (c.f.\ \eqref{J}), in that for each $0 \leq j \leq m,$ we show there is a constant $C_{V,j}$ and a set $\{ w_k^j \}\subset [b_j,a_{j+1}]$  so that
  \[
    \max_{x\in [b_j, a_{j+1}]}  \bigl\{ \log | P_N(x) | - N \tilde{g}(x)\bigr\}
    \le  \max_{k \in [C_{V,j}N+1]}  \bigl\{ \log | P_N(w_k^j) | - N \tilde{g}(w_k^j)\bigr\}
    +\log(42) .
  \]
The result then follows from taking the union of interpolating points $\cup_{j=1}^m \{ w_k^j \}$ and $C_V =1+ \sum_{j=0}^m C_{V,j}$. 
 So we fix $j$ and we may assume that $b_j = -1$ and $a_{j+1}=1$ by using an affine change of variables.
Note that  from the Euler--Lagrange equation \eqref{EL}), 
  \[
    \widetilde{g}(x) = V(x) + \ell_{V}
    \quad
    \text{for all}
    \quad 
    x \in [-1, 1].
  \]
We can also assume that $ \ell_{V}=0$ and it suffices to bound
\[
\max_{x\in[-1,1]}  \bigl\{ \log | P_N(x) | - N V(x)\bigr\}  = \log \max_{x\in[-1,1]} \bigl\{ \ |P_N(x)| e^{-NV(x)} \bigr\}  . 
\]
  As $V$ is real-analytic, $V(J(z))$ is analytic in the annulus $1-2\delta < |z| < 1+2\delta$ for a small $\delta>0$.  In particular, we may write for $\theta\in[0,\pi]$, 
 \[
  e^{-NV(\cos \theta)} = r_0 + \sum_{m=1}^\infty 2 r_m \cos(m \theta)
 \]
 where $\cos \theta = J(e^{i\theta})$ and
  \[
    r_m=
    \frac{1}{2\pi i} \oint_{|z|=1+\delta}e^{-N V(J(z)) } z^{-m-1} dz . 
  \]
  In particular, for any $K>0$, there exists $C_{V,K}$ so that $|r_m| \le e^{-KN-\delta m/2}$ if  $m \ge C_{V,K} N$. 
  This shows that for $\theta\in[0,\pi]$, 
 \[
\left|   e^{-NV(\cos \theta)} - r_0 - \sum_{m<C_{V,K} N} 2 r_m \cos(m \theta) \right| \le C_\delta e^{-K N} .
 \]
  Hence, letting $H_N(x) = r_0 + \sum_{m<C_{V,K} N} 2 r_m T_m(x)$ where $T_m$ are Chebyshev polynomials and choosing $K$ sufficiently large compared to $\max_{[-1,1]}|V|$, we obtain for $x\in[-1,1]$,
 \begin{equation} \label{approx}
  |1- H_N(x) e^{N V(x)}| \le 1/2 . 
\end{equation}
We conclude that $e^{-NV(x)} \le 2 H(x)$ and 
\[
   \max_{x\in[-1,1]}  \bigl\{ \log | P_N(x) | - N \tilde{g}(x)\bigr\}
    \le  \log \left( 2 \max_{x\in[-1,1]} \big\{ |P_N(x)| H_N(x) \big\}  \right)
\]
  Applying Lemma \ref{lem:CMN} (using the degree of $P_N H$ has degree  $C_{V,j}N$), we further obtain
  \[
  \max_{x\in [-1,1]}  \bigl\{ \log | P_N(x) | - N \tilde{g}(x)\bigr\}
    \leq\max_{k \in [2C_{V,j} N+1]}  \bigl\{ \log\big( |P_N(x_k)| H(x_k)\big) \bigr\} + \log(28).
  \]
   Using \eqref{approx} and the Euler--Lagrange equation again, $H(x_k) \le 3/2 e^{-N  \widetilde{g}(x_k) }$ for all $k$ so that 
\[
  \max_{x\in [-1,1]}  \bigl\{ \log | P_N(x) | - N \tilde{g}(x)\bigr\} 
  \le \max_{k \in [2C_{V,j} N+1]} \bigl\{ \log | P_N(x_k) | - N \tilde{g}(x_k)\bigr\} + \log(42) . 
\]
  Thus, going back to the original interval $[b_j,a_{j+1}]$ we conclude by choosing the mesh-points, 
  \begin{equation}\label{eq:explicits}
    w_k^j=\tfrac{a_{j+1}+b_j}{2} + \tfrac{a_{j+1}-b_j}{2}\cos\big(\tfrac{\pi(k-1)}{C_{V,j} N}\big)
    \quad
    \text{for}
    \quad 
    k \in [C_{V,j} N + 1]. \qedhere
  \end{equation}
 \end{proof}

 On the other hand, we claim that to control the maximum value of the field $\EF$, we can look slightly into the complex plane from $\mathbf{J}$. This will allow us to study more regularized statistics instead. 

\begin{proposition}
  Let $V$ be real analytic and regular.
  Let $C_V$ and $\left\{ w_k \right\}$ be as in Corollary \ref{cor:chbyshevbound}.
  Almost surely, for any $y \geq 0,$ 
\begin{equation} \label{max_inequality}
  \sup_{x\in \mathbf{J}} \EF(x) \le     \max_{\{w_k\}} \EF( w_k- i y/N ) + \log(42) + y \frac{ \pi \|\LSD\|_\infty}{3}.
\end{equation}
  \label{prop:regularization}
\end{proposition}
\begin{proof}
First, by monotonicity, for any $x\in\R$ and $y>0$, 
  \[
        \int_\R \log|x-u|\,\ESD(du) \leq
        \int_\R \log|x-i y -u |\,\ESD(du).
  \]
  On the other hand,
 \begin{align*}
    \int_\R \log|x-iy-v|\,\LSD(u)du
    &=
    \int_\R \log|x-v|\,\LSD(u)du
    +
    \int_\R \int_0^y \frac{t}{(x-u)^2+t^2}dt \LSD(u)du. \\
    \intertext{Thus, making a change of variables in the last integral and bounding uniformly the equilibrium density, we obtain}
    \int_\R \log|x-i y-u|\,\LSD(u)du
    &\leq
    \int_\R \log|x-u|\,\LSD(u)du
    +
   y \|\LSD\|_\infty \int_\R \int_0^1 \frac{t}{u^2+t^2}dtdu. \\
    &=
    \int_\R \log|x-u|\,\LSD(u)du
    +
   y \frac{ \pi \|\LSD\|_\infty}{3} .
  \end{align*}
  
  Combining both inequalities for the log potentials of $\ESD$ and $\LSD$,
   we conclude that for any $k\in[C_V \cdot N+1]$,
   $$
 \EF( w_k) \le \EF( w_k- i y/N )      +   y \frac{ \pi \|\LSD\|_\infty}{3} ,
    $$
and the  bound \eqref{max_inequality} follows directly from Corollary~\ref{cor:chbyshevbound}.
\end{proof}

The inequality \eqref{max_inequality} and the bound of Corollary~\ref{prop:expbound} for the exponential moments of the field $\EF$ allow us to easily complete the proof of theorem~\ref{prop:absub}.

\begin{proof}[Proof of theorem~\ref{prop:absub}]
Given any sequence $y_N \in[1,N]$,  we let $q_k = w_k- i y_N/N$ where $\{w_k\}$ is the mesh from Corollary \ref{cor:chbyshevbound}.
By a union bound and Markov's inequality, we have
\begin{equation*} 
  \Pr\left[   \max_{\{w_k\}} \EF( q_k )  \ge {\log N + \tfrac12 \log\log N}    \right]
\le  \frac{1}{N^2\log N} \sum_{\{w_k\}} \Exp\left[ e^{2\EF(q_k )} \right] . 
\end{equation*}
To estimate the RHS of this formula, we use the bound of Corollary~\ref{prop:expbound}.  Namely, we get 
\begin{equation}  \label{Markov_inequality}
  \Pr\left[   \max_{\{w_k\}} \EF( q_k )  \ge {\log N + \tfrac12 \log\log N}    \right]
  \le  C\sum_{{\{w_k\}}}  \frac{\mathrm{R}(q_k)^2 }{Ny_N \log N}.
\end{equation}

By  formula \eqref{R_function},  we have 
 \begin{equation} \label{R_inequality}
 \mathrm{R}(q)^2 =\prod_{j=0}^m |(q-a_j)(q-b_j)|^{-1/2} 
   \le C \max_{r \in \{a_j,b_j\}_{j=1}^m}  \frac{1}{\sqrt{|q-r|}} .
\end{equation}  
To estimate the sum on the RHS of \eqref{Markov_inequality}, we need the explicit form \eqref{eq:explicits} of the points $\{w_k\}.$  For every cut, there is a higher density of points from the family $\{q_k^j\}$ near the endpoints $\{a_j, b_j\}$  and for $k \in [C_{V,j} N + 1]$,
\[
\min_{r \in \{a_j,b_j\}} |q_k^j-r| \ge   c  \frac{y_N+(k-1)^2 \wedge (C_{V,j}-k-1)^2}{N^2} .
\]
This implies that $\sum_{{\{w_k\}}} \mathrm{R}(q_k)^2 =O(N\log N) $ accounting for all the cuts. 
  Returning to \eqref{Markov_inequality}, we conclude that
\begin{equation}  \label{Markov_inequality2}
  \Pr\left[   \max_{ k \in [C_VN+1] } \EF( q_k )  \ge {\log N + \tfrac12 \log\log N}    \right]
  \le \frac{C_V}{y_N},
\end{equation}
which implies Theorem~\ref{prop:absub} on account of Proposition \ref{prop:regularization}.
\end{proof}

\section{Lower bound proof}
\label{sec:lb}
This section is concerned with the proof of Theorem~\ref{thm:subharmoniclb}.  We will begin by giving an overview of the proof.  In what follows we fix $\delta >0$ a small positive constant, that we will ultimately take to $0.$  We let $n = \lceil \log N \rceil,$ and we define $n_0 = \lfloor (1-\delta) n \rfloor.$  

Recall that we would like to show, for some $\omega \in \T,$ the unit circle, that $\WEF(\omega \zeta_{n_0})$ is large.
Consider biasing the field $\WEF$ by the Radon-Nikodym derivative $e^{2\WEF(\omega\zeta_{n_0})}\cdot\Exp\left[e^{2\WEF(\omega\zeta_{n_0})}\right]^{-1}.$  If this were a Gaussian field, this bias would have the effect of changing the means of the field (c.f.\,Lemma~\ref{lem:means}).  In particular, along the ray $\left\{ \omega \zeta_j \right\}_1^\infty$ the mean of $\WEF(\omega \zeta_j)$ would then be roughly $j.$  The variance of $\WEF(\omega\zeta_{n_0})$, however, would be unchanged.  In particular, after biasing, it is typical behavior that $\WEF(\omega\zeta_{n_0}) \approx n_0.$ Moreover, on the event that $\WEF(\omega\zeta_{n_0}) \approx n_0,$ the Radon-Nikodym factor behaves like a constant, which would allow us to produce a lower bound for the event that $\WEF(\omega \zeta_{n_0})$ is large without the biasing factor.

Hence, this suggests one strategy for showing the field is large: computing a biased first and second moment of $\WEF(\omega \zeta_{n_0})$.  Since we do not have direct access to field moments, we approximate these moments by using linear combinations of exponentials.  We refer to these estimates as the \emph{field moment calculus} (see Section~\ref{sec:fmc}).

This by itself leads to lower bounds for quantities like $\Pr\left[ \WEF(\omega \zeta_n) > n \right].$
 To produce a lower bound for the maximum of $\WEF,$ we need more, as such events are too far from being independent.  Hence, we apply a modified second moment method, which appears frequently in the study of log-correlated fields.  Roughly speaking, we must constrain the trajectory of the walk $\WEF(\omega \zeta_j) \approx j$ for many values of $j$.  Moreover, such a constraint turns out to be typical for the direction $\omega$ along which the maximum is achieved, and hence this condition does not reduce the probability too much.

Let $\eta=\eta_N$ be a slowly growing sequence to be specified later.  Let $b_k = k\lfloor n_0/ \eta \rfloor$ for all $k=0, \dots \eta$. 
Define $r=r(\delta,N,\eta) \in \N$ to be the smallest integer so that $z \in \D$ having $\dH(0,z) > b_{r}$ have $|z| > 1-N^{-2\delta}.$
For $\omega \in \T,$ define the subset of fields $\BL$ by
\begin{equation} \label{good_event}
    \BL[\omega] = \{\F : |\F(\omega\zeta_{b_k}) - \F(i\zeta_{b_r}) - (b_{k}-b_r)| \leq \eta \sqrt{n},~\forall~ r < k \leq \eta \}. %
\end{equation}
The point $\zeta_{b*}$ from the introduction is $\zeta_{b_r}.$

Recall the set $\Omega = \left\{ e^{i(\tfrac{\pi}{2} + h e^{-n_0})} : h \in \mathbb{Z}, |h| < N^{-\delta}e^{n_0} \right\}.$
Roughly, we would like to apply the second moment method to the counting function 
\[
    \tilde{Z} =  \sum_{\omega \in \Omega}
    \one[{\WEF \in \BL[\omega]}].%
\]
Then, we would estimate
\(
  \Pr\left[ \tilde{Z} \geq 1 \right]
  \geq \frac{
    (\Exp[ \tilde{Z} ])^2
  }{
    \Exp[ \tilde{Z}^2 ]
  }.
\)
One of the subtleties of this strategy is that to get this probability going to $1,$ one basically needs that for most pairs $(\omega_1,\omega_2),$
\begin{align*}
  &\Pr[\WEF \in \BL[\omega_1] \cap \BL[\omega_2]] 
  =\Pr[\WEF \in \BL[\omega_1] ]%
  \cdot\Pr[\WEF \in \BL[\omega_2] ](1+o(1)).
\end{align*}
In the case of branching random walk, this is achieved using actual independence of the two events, once a suitably small common ancestral tree is discarded.  In our case, no such independence is available.  %
 Further, the need to constantly bias and unbias the measure by exponential factors which are not totally determined by being on the event $\BL$ causes losses which are not $(1+o(1)).$ 

We solve this problem by changing the second moment formalism. 
Define for any $\omega \in \Omega$
\begin{equation} \label{rv_Y}
    \MFI(\omega) = e^{2\WEF(\omega \zeta_{n_0}) - 2\WEF(i \zeta_{b_{r}})} \one[ \WEF \in \BL].
\end{equation}
In terms of this replacement for the indicator, we form the biased counting variable
\begin{equation*}
  Z=  \sum_{\omega \in \Omega} \MFI(\omega). 
\end{equation*}
The advantage of using $\MFI$ is that a nearly sharp (up to $1+o(1)$ multiplicative error) estimate for $\Exp\left[ \MFI(\omega_{1})\MFI(\omega_{2}) \right]$ when $\omega_1$ and $\omega_2$ are well-separated is attained by simply dropping the indicator.

We again want to show that this variable is non-negative, as this implies one of the indicated events holds.  Applying Cauchy-Schwartz inequality,
\begin{equation}
  \label{eq:2mm}
  \Exp[ \one[Z > 0]]
  \geq \frac
  { \bigl(\sum_{\omega}\Exp\left[ \MFI(\omega) \right]\bigr)^2}
  {\sum_{\omega_1,\omega_2}\Exp\left[\MFI(\omega_1)\MFI(\omega_2)\right]},
\end{equation}
where in both summations, $\omega$ ranges over $\Omega.$ 

As mentioned, an efficient upper bound for the denominator is simply to drop the indicators and use an estimate for mixed exponential moments.  The numerator, however, is less amenable to a direct estimate but the moral here is that with enough uniformity in the estimates, mixed exponential moments are enough to estimate the RHS of \eqref{eq:2mm} from below.

\subsection{Field moment calculus}
\label{sec:fmc}

In what follows, we will fix an $\omega \in \Omega.$
First, for any finite bias term $\Bias,$ and any nonnegative measurable function $\phi$, we denote
\[
    \FMC{\Bias}[\phi(\WEF)] = \frac{\Exp \left[e^{\Bias{\WEF}}\phi(\WEF)\right]}
    {\Exp \left[e^{\Bias{\WEF}}\right]}.
\]
We will drop the dependence of the notation on $\Bias$ when 
\[
    \Bias{\F} =
    \Bias{\F}[\omega]:=
    2\F(\omega \zeta_{n_0})
    -2\F( i\zeta_{b_{r}}),
\]
which plays a special role.
This section is concerned with using the exponential moment criteria Assumption~\ref{ass:wusa} to produce estimates on biased moments of $\WEF.$ 
   
Hence, we recall the effect of biasing a jointly Gaussian vector by a linear functional of that vector is to change the mean of the Gaussian vector.  
\begin{lemma}
  Let $\mathbf{z}$ and $\mathbf{y}$
  be finite subsets of $\D$ 
  and let $F\in C(\D)$ be a real valued function on $\D$. 
  Let 
  \begin{align*}
    \Bias: \F  &\mapsto 
    \sum_{z \in \mathbf{z}} 2\F(z)
    -\sum_{y \in \mathbf{y}} 2\F(y),
  \end{align*}
  and let $\mu: \zeta \mapsto \Exp\left[ \WGF(\zeta) \Bias(\WGF) \right].$
  Then
  \[
    \frac{\Exp\left[ F(\WGF)e^{\Bias{\WGF}} \right]}{\Exp\left[e^{\Bias{\WGF}} \right]}
    =
    \Exp\left[F(\WGF+\mu)\right].
  \]
  \label{lem:means}
\end{lemma}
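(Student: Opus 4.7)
The statement is the Cameron--Martin shift identity for the Gaussian field $\WGF$, and the natural plan is to reduce everything to a finite-dimensional jointly-Gaussian computation and then promote that to a statement about functionals via cylinder approximation.

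The plan is to exploit the key observation that $\Bias(\WGF) = 2\sum_{z\in\mathbf{z}}\WGF(z) - 2\sum_{y\in\mathbf{y}}\WGF(y)$ is a finite linear combination of values of a centered Gaussian field, hence it is itself a centered Gaussian random variable. Moreover, for any finite tuple of points $\zeta_1,\dots,\zeta_k \in \D$, the vector $(\WGF(\zeta_1),\dots,\WGF(\zeta_k),\Bias(\WGF))$ is jointly centered Gaussian. The input I would then invoke is the classical one-line fact: if $(X,Y) \in \R^k \times \R$ is jointly centered Gaussian, then for any bounded measurable $G: \R^k \to \R$,
\[
\frac{\Exp[G(X)e^Y]}{\Exp[e^Y]} = \Exp[G(X + \Cov(X,Y))].
\]
This is an immediate consequence of completing the square in the joint density (equivalently, of comparing the Laplace transforms $\Exp[e^{\langle s,X\rangle + Y}]$ and $\Exp[e^{\langle s,X\rangle}]e^{\langle s,\Cov(X,Y)\rangle}\Exp[e^Y]$, which agree as Gaussian exponentials of the same quadratic form). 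By bilinearity of covariance and the definition of $\mu$, the shift vector satisfies $\Cov(\WGF(\zeta_j),\Bias(\WGF)) = \mu(\zeta_j)$.

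Second, I would apply this to cylinder functionals $F_k(\F) = G(\F(\zeta_1),\dots,\F(\zeta_k))$ depending only on the values of $\F$ at a fixed finite set of points, obtaining
\[
\frac{\Exp[F_k(\WGF)e^{\Bias(\WGF)}]}{\Exp[e^{\Bias(\WGF)}]}
= \Exp\bigl[G(\WGF(\zeta_1) + \mu(\zeta_1),\dots,\WGF(\zeta_k) + \mu(\zeta_k))\bigr]
= \Exp[F_k(\WGF + \mu)].
\]
Since $\mu$ is a finite linear combination of the continuous, harmonic kernels $\zeta \mapsto \Exp[\WGF(\zeta)\WGF(z)]$, it lies in $C(\D)$, so $\WGF + \mu$ is an honest continuous function and the pointwise identity is well-defined.

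Finally, for a general functional $F$ on the space of fields, I would extend by approximation: cylinder functionals are dense in $C_b(C(\D))$ under the topology of uniform convergence on compact sets, and $\WGF$ is a.s.\ continuous (the harmonicity condition in Definition~\ref{def:brwlike}(a) together with the increment bound (b) furnishes enough regularity), so the identity passes to the limit. The only delicate point — and really the only bookkeeping obstacle — is making precise the class of admissible $F$ for which the statement is meant to hold (bounded continuous, bounded measurable, or nonnegative measurable via monotone class); once that class is fixed, the cylinder-to-functional extension is routine and the whole content of the lemma is the one-line completing-the-square identity above.
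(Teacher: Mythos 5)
Your proposal is correct and takes essentially the same route as the paper: reduce by density to cylinder functionals of finitely many field values, identify the joint Gaussian vector, and complete the square in the density (the paper carries out the square-completion explicitly with $u=\Sigma v$, whereas you quote it as the standard tilted-Gaussian identity, but that is the same calculation). The covariance bookkeeping $\Cov(\WGF(\zeta),\Bias(\WGF))=\mu(\zeta)$ and the a.s.\ continuity of $\WGF$ justifying the cylinder reduction are also exactly the ingredients the paper uses.
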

\begin{proof}
  The field $\WGF$ is almost surely continuous (in fact harmonic) in $\D,$ and so it suffices to assume that $F$ is a function which depends on the value of $\WGF$ at finitely many points of $\D$ by a density argument.  Let $\mathbf{w}$ be a finite set of points containing $\mathbf{z}$ and $\mathbf{y}.$
    The vector $\left( \WGF(w) \right)_{w \in \mathbf{w}}$ is jointly Gaussian.  
    Let $\Sigma$ be the covariance matrix of this vector.  We can find a vector $v \in \R^k$ representing $\Bias$ in that
    \[
        \Exp\left[ F(\WGF)e^{\Bias{\WGF}} \right]
        =
        \int_{\R^k}
        \frac{F(x)e^{v^tx}}{\sqrt{(2\pi)^k |\det \Sigma|}}
        e^{\tfrac{-x^t \Sigma^{-1} x}{2}}\,dx.
    \]
    We can now set $u = \Sigma v$ and complete the square, to get
    \[
        \frac{
            \Exp\left[ F(\WGF)e^{\Bias{\WGF}} \right]
        }
        {
            e^{\tfrac{u^t \Sigma^{-1}u}{2}}
        }
        =
        \int_{\R^k}
        \frac{F(x)}{\sqrt{(2\pi)^k |\det \Sigma|}}
        e^{\tfrac{-(x-u)^t \Sigma^{-1} (x-u)}{2}}\,dx.
    \]
    The constant $e^{\tfrac{u^t \Sigma^{-1}u}{2}} = \Exp\left[e^{\Bias{\WGF}} \right],$ as can be verified by setting $F \equiv 1$ and changing variables in the integral.  Moreover, the right hand side is exactly the claimed expression in the lemma.
\end{proof}

Recall that $\mathfrak{W}_{ \ell,\delta}(\Bias_\omega)$ is the set of biases of the form 
\[
  \F \mapsto
  \Bias_\omega(\F)  
  + \sum_{z \in \mathbf{z}} 2\F(z)
  - \sum_{y \in \mathbf{y}} 2\F(y)
\]
for some $\mathbf{z},\mathbf{y} \subset \mathcal{D}_{N,\delta}$ of cardinalities $\ell$ having the property that $\mathbf{z}$ and $\mathbf{y}$ are paired: there exists a bijection $\phi : \mathbf{z} \to \mathbf{y}$ so that \eqref{condition_2} holds.  We let $\mathfrak{W}_{\ell,\delta}^*(\Bias_\omega) \subset \mathfrak{W}_{\ell ,\delta}(\Bias_\omega)$ be the biases such that in addition to \eqref{condition_2}, for all $z \in \mathbf{z},$
\[
  \dH(z,\phi(z)) \leq 1.
\]

This simple change of mean lemma will be used in the following form.
\begin{corollary}
  Fix $\ell \in \mathbb{N}.$
  Suppose Assumption $\WUSA$ (\ref{ass:wusa}) holds.  
  Let $\mu(\cdot) = \Exp\left[ \WGF(\cdot)\Bias[\omega](\WGF) \right]$ be the mean of $\WGF$ under the bias $\Bias_\omega.$ For any $\delta > 0$ there is a constant $C=C(\ell,\delta)$ so that for all $\omega \in \Omega$ and all $\Bias \in \mathfrak{W}^*_{\ell,\delta}(\Bias_\omega)$
  \[
   \left|
    \FMC{\Bias_\omega}
    \left[ 
      e^{\Bias(\WEF)-\Bias_\omega(\WEF)}
    \right]
    -
    \Exp \left[ 
      e^{\Bias(\mu+\WGF)-\Bias_\omega(\mu+\WGF)}
    \right]
    \right|
    \leq CN^{-\delta}.
  \]
  \label{cor:emc}
\end{corollary}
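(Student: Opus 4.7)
I would first unfold the definition of $\FMC{\Bias_\omega}$ and cancel the $\Bias_\omega$ weights in numerator and denominator to rewrite the left hand side as the bare ratio
\[
\FMC{\Bias_\omega}\bigl[e^{\Bias(\WEF) - \Bias_\omega(\WEF)}\bigr]
= \frac{\Exp\bigl[e^{\Bias(\WEF)}\bigr]}{\Exp\bigl[e^{\Bias_\omega(\WEF)}\bigr]}.
\]
Since $\Bias \in \mathfrak{W}^*_{\ell,\delta}(\Bias_\omega) \subset \mathfrak{W}_{\ell,\delta}(\Bias_\omega)$, and $\Bias_\omega$ is the sole element of $\mathfrak{W}_{0,\delta}(\Bias_\omega)$, applying Assumption~\ref{ass:wusa} at levels $\ell$ and $0$ converts both factors to their $\WGF$ analogues at the cost of a uniform multiplicative $(1 + O(N^{-\delta}))$:
\[
\FMC{\Bias_\omega}\bigl[e^{\Bias(\WEF) - \Bias_\omega(\WEF)}\bigr]
= \frac{\Exp\bigl[e^{\Bias(\WGF)}\bigr]}{\Exp\bigl[e^{\Bias_\omega(\WGF)}\bigr]} \bigl(1 + O(N^{-\delta})\bigr).
\]

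Next, I would invoke the Gaussian mean-shift Lemma~\ref{lem:means} with $F = e^{\Bias - \Bias_\omega}$, which is a cylinder function of the values of $\WGF$ at the finitely many points of $\mathbf{z} \cup \mathbf{y}$. The conclusion of that lemma is exactly
\[
\frac{\Exp\bigl[e^{\Bias(\WGF)}\bigr]}{\Exp\bigl[e^{\Bias_\omega(\WGF)}\bigr]}
= \Exp\bigl[e^{\Bias(\WGF + \mu) - \Bias_\omega(\WGF + \mu)}\bigr],
\]
with the mean-shift $\mu$ matching the definition in the statement.

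The last step, and the only place the strengthening from $\mathfrak{W}_{\ell,\delta}$ to $\mathfrak{W}^*_{\ell,\delta}$ is used, is to convert the multiplicative error into the claimed additive one by bounding the Gaussian exponential moment on the right above by a constant $C(\ell,\delta)$. Using the pairing $\phi : \mathbf{z} \to \mathbf{y}$, the exponent telescopes as
\[
\Bias(\WGF + \mu) - \Bias_\omega(\WGF + \mu)
= 2\sum_{z \in \mathbf{z}} \bigl[(\WGF + \mu)(z) - (\WGF + \mu)(\phi(z))\bigr].
\]
Since $\dH(z, \phi(z)) \le 1$ for each $z$ by the definition of $\mathfrak{W}^*_{\ell,\delta}$, BRW-like property~(b) gives $\Var(\WGF(z) - \WGF(\phi(z))) \ll 1$, while BRW-like property~(c) applied to the two-point bias $\Bias_\omega$ gives $|\mu(z) - \mu(\phi(z))| \ll \dH(z, \phi(z)) \ll 1$. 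Summing over the $\ell$ pairs, the exponent is Gaussian with $O(\ell)$ mean and $O(\ell^2)$ variance, so its exponential moment is bounded by some $C(\ell,\delta)$. Multiplying the previous display by $(1 + O(N^{-\delta}))$ and absorbing the bounded factor yields the additive $O(N^{-\delta})$ error.

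There is no serious obstacle here; the content is a bookkeeping assembly of Assumption~\ref{ass:wusa} and Lemma~\ref{lem:means}. The only point that genuinely needs watching is the last step: the uniform hyperbolic-distance bound $\dH(z,\phi(z)) \le 1$ built into $\mathfrak{W}^*_{\ell,\delta}$ is precisely what prevents the mean-shifted Gaussian factor from growing with $N$ and thereby spoiling the passage from multiplicative to additive error.
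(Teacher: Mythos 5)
Your proposal is correct and follows the paper's own proof essentially step for step: unfold the $\FMC_{\Bias_\omega}$ weight to a ratio, apply Assumption~\ref{ass:wusa} to numerator and denominator, invoke Lemma~\ref{lem:means}, and then bound the Gaussian factor by a constant using the pairing $\phi$ with $\dH(z,\phi(z))\le 1$ together with BRW-like properties (b)--(c). The paper phrases the last step as bounding $|\Var(\Bias(\WGF))-\Var(\Bias_\omega(\WGF))|\ll_\ell 1$ rather than directly bounding the mean-shifted exponential moment, but since $\WGF$ is centered Gaussian these are the same computation.
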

\begin{proof}
  By definition of $\FMC_{\Bias_\omega}$ and an application of Assumption~\ref{ass:wusa},
  \begin{align*}
    \FMC{\Bias_\omega}
    \left[ 
      e^{\Bias(\WEF)-\Bias_\omega(\WEF)}
    \right]
    &=\frac{\Exp[ e^{\Bias(\WEF)}] }{\Exp[ e^{\Bias_\omega(\WEF)}] } \\
    &=\frac{\Exp[ e^{\Bias(\WGF)}](1+O(N^{-\delta})) }{\Exp[ e^{\Bias_\omega(\WGF)}](1+O(N^{-\delta}))} \\
    &=
    \Exp \left[ 
      e^{\Bias(\mu+\WGF)-\Bias_\omega(\mu+\WGF)}
    \right]
    (1+O(N^{-\delta}))
  \end{align*}
  where, at last, we used lemma~\ref{lem:means} applied to the field $\WGF$. 
  So, to complete the proof, we just need to establish that 
  \[
  \frac{\Exp[ e^{\Bias(\WGF)}]}{\Exp[ e^{\Bias_\omega(\WGF)}]} \asymp_\ell 1.
  \]
  As $\WGF$ is Gaussian, this is equivalent to showing that
  \[
    |\Var( \Bias(\WGF)) - \Var(\Bias_\omega(\WGF))| \ll_\ell 1.
  \]
  Expanding the variances, we have
  \begin{align*}
    \Var( \Bias(\WGF)) - \Var(\Bias_\omega(\WGF))
    =
    \sum_{z \in \mathbf{z}} {4}&\Exp\left[ \Bias_\omega(\WGF)(\WGF(z)-\WGF(\phi(z)))\right] \\
    +
    \sum_{z,w \in \mathbf{z}} {4}&\Exp\left[ (\WGF(z)-\WGF(\phi(z)))(\WGF(w)-\WGF(\phi(w))) \right].
  \end{align*}
  Using Property (c) of Definition \ref{def:brwlike} and the fact that   $\dH(z,\phi(z)) \leq 1$, each summand in the above equation is bounded by an absolute constant, from which the desired conclusion on the variances follows.
\end{proof}

\begin{lemma}
 Fix $\ell \in \mathbb{N}$ and $\omega \in \Omega.$  
 Suppose Assumption $\WUSA$ (\ref{ass:wusa}) holds.
 Let $\Bias = \Bias_\omega.$
 There is a constant $C=C(\ell,\delta)$ so that for all $0 \leq \Delta \leq 1$ the following holds. Let $\left\{ z_i \right\}_1^\ell$ and $\left\{ y_i \right\}_1^\ell$ be points in $\mathcal{D}_{N,\delta}$ with $\dH(z_i,y_i) \leq \Delta$ for all $1 \leq i \leq \ell.$  Define 
  \[
    \Bias[i] : \F \mapsto 2\F(z_i) - 2\F(y_i).
  \]
  Let $\mu(\cdot) = \Exp\left[ \WGF(\cdot)\Bias{\WGF} \right]$ be the mean of $\WGF$ under the bias $\Bias$.   Then
  \[
    \left|
    \FMC{\Bias}
    \left[ 
      \textstyle{\prod_{i=1}^\ell} \Bias[i](\WEF)
    \right]
    -
    \Exp \left[ 
      \textstyle{\prod_{i=1}^\ell} \Bias[i](\mu + \WGF)
    \right]
    \right|
    \leq C(\Delta^{\ell+1} \vee N^{-\delta/4}).
  \]
  \label{lem:uberharmonic}
\end{lemma}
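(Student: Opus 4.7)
The plan is to reduce $\FMC{\Bias}[\prod_i\Bias[i](\WEF)]$ to a combination of exponential moments covered by Corollary~\ref{cor:emc}, via the algebraic identity
\[
\prod_{i=1}^\ell x_i = \prod_{i=1}^\ell(e^{x_i}-1) - \sum_{\substack{\mathbf{n}\in\N^\ell,\ n_i\ge 1 \\ \mathbf{n}\ne\mathbf{1}}}\prod_{i=1}^\ell\frac{x_i^{n_i}}{n_i!}.
\]
This identifies $\prod_i x_i$ as the multilinear part of $\prod_i(e^{x_i}-1)$, with every monomial in the remainder having total degree at least $\ell+1$. I apply this identity pointwise to $x_i=\Bias[i](\F)$ and take expectations on both sides.

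For the leading term, expanding the product of shifted exponentials yields an alternating sum over $S\subseteq[\ell]$:
\[
\prod_{i=1}^\ell(e^{\Bias[i](\F)}-1) = \sum_{S\subseteq[\ell]}(-1)^{\ell-|S|}\exp\Bigl(\sum_{i\in S}\Bias[i](\F)\Bigr).
\]
For each $S$, the exponent $\Bias_\omega+\sum_{i\in S}(2\F(z_i)-2\F(y_i))$ is a bias in $\mathfrak{W}^*_{|S|,\delta}(\Bias_\omega)$: the natural pairing $\phi(z_i)=y_i$ satisfies the minimum-distance requirement because $\dH(z_i,y_i)\le\Delta\le 1$. Applying Corollary~\ref{cor:emc} term by term and re-summing gives
\[
\FMC{\Bias}\Bigl[\prod_{i=1}^\ell(e^{\Bias[i](\WEF)}-1)\Bigr] = \Exp\Bigl[\prod_{i=1}^\ell(e^{\Bias[i](\mu+\WGF)}-1)\Bigr] + O_\ell(N^{-\delta}).
\]

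Next I estimate the remainder on each side. For the Gaussian field, Properties~(b) and~(c) of Definition~\ref{def:brwlike} together with bilinearity show $|\Bias[i](\mu)|\ll\Delta$ and $\Var(\Bias[i](\WGF))\ll\Delta^2$, so $\Bias[i](\mu+\WGF)$ is scalar Gaussian with mean and standard deviation $O(\Delta)$; hence $\Exp[\Bias[i](\mu+\WGF)^{n_i}]=O_{n_i}(\Delta^{n_i})$ and summing the series defining the remainder yields $O(\Delta^{\ell+1})$. For the non-Gaussian field, the analogous bound $\FMC{\Bias}[\prod_i\Bias[i](\WEF)^{n_i}]=O(\Delta^{\sum n_i})$ is obtained from Corollary~\ref{cor:emc} by representing each power $\Bias[i](\WEF)^{n_i}$ as the limit of a genuine multilinear product $\prod_{j=1}^{n_i}(2\WEF(z_i^{(j)})-2\WEF(y_i^{(j)}))$ over small perturbations of $(z_i,y_i)$ into $n_i$ distinct pairs. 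The harmonicity of $\WEF$ inside $\mathcal{D}_{N,\delta}$ (part of Assumption~\ref{ass:wusa}) justifies passing to the limit, and Corollary~\ref{cor:emc} applied to the enlarged multilinear configuration transfers the $O(\Delta^{\sum n_i})$ bound from the Gaussian to the $\WEF$ side, up to WUSA error.

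Combining the two estimates gives
\[
\bigl|\FMC{\Bias}[\textstyle\prod_i\Bias[i](\WEF)] - \Exp[\textstyle\prod_i\Bias[i](\mu+\WGF)]\bigr| \ll_\ell \Delta^{\ell+1} + N^{-\delta},
\]
which implies the stated bound; the weaker exponent $N^{-\delta/4}$ absorbs the cumulative WUSA errors incurred by the perturbation/bootstrap argument on the non-Gaussian higher moments and by truncating the infinite series in the remainder at an appropriate finite level. The main obstacle is precisely this higher-moment control on the non-Gaussian side: since Assumption~\ref{ass:wusa} only compares biases of integer coefficient $\pm 2$, a bare application of Corollary~\ref{cor:emc} does not yield $\FMC{\Bias}[\Bias[i](\WEF)^{n_i}]$ for $n_i\ge 2$, and one must exploit the harmonicity of $\WEF$ to realize those moments as limits of multilinear configurations to which the corollary does apply.
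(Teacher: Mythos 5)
Your overall strategy---separate $\prod_i \Bias_i$ from $\prod_i (e^{\Bias_i}-1)$, estimate the leading term via Corollary~\ref{cor:emc}, and bound the degree-$\geq\ell+1$ remainder by $\Delta^{\ell+1}$---is the same as the paper's. But there are two genuine gaps, one of which would cause the argument to fail outright.

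First, you assert that the pairing $\phi(z_i)=y_i$ automatically puts the bias $\Bias_\omega + \sum_{i\in S}\Bias_i$ into $\mathfrak{W}^*_{|S|,\delta}(\Bias_\omega)$ because $\dH(z_i,y_i)\le\Delta\le 1$. This ignores the other half of the $\mathfrak{W}^*$ constraint: one also needs
\[
\dH(z_i,y_i)\leq\min\Bigl\{\min_{z'\in Z\setminus\{z_i\}}\dH(z_i,z');\ \min_{w\in W\setminus\{y_i\}}\dH(w,y_i)\Bigr\},
\]
and nothing in the hypotheses forces distinct pairs $(z_i,y_i)$, $(z_j,y_j)$ to be far apart. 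If two pairs are close, the bias simply fails to lie in $\mathfrak{W}^*_{\ell,\delta}$ and Corollary~\ref{cor:emc} does not apply. The paper removes this obstruction with a harmonicity reduction (the contours $\gamma_i$ and Poisson representation of the increments $\WEF(z_i)-\WEF(y_i)$), which redistributes the endpoints so that they become uniformly separated by distance $3$ while leaving the moment unchanged. You would need that step, or some equivalent, before Corollary~\ref{cor:emc} can be invoked.

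Second, your control of the non-Gaussian remainder is circular. To bound $\FMC_{\Bias}[\prod_i\Bias_i(\WEF)^{n_i}]$ for $\mathbf{n}\neq\mathbf{1}$ you propose to split each $\Bias_i^{n_i}$ into a limit of multilinear configurations and then ``apply Corollary~\ref{cor:emc}''; but Corollary~\ref{cor:emc} controls exponential moments of $\WEF$, not polynomial moments of the field, so this is exactly the estimate one is trying to prove (now with $\sum n_i>\ell$ factors), not something the corollary yields. Moreover, the full remainder is an infinite series, and you would need uniform-in-$\mathbf{n}$ bounds plus a quantitative truncation argument, neither of which is supplied; the claim that $N^{-\delta/4}$ ``absorbs the cumulative errors'' is not justified. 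The paper avoids all of this with elementary pointwise inequalities $e^x-1-x\le 2(\cosh x-1)$ and $x^2\le 2(\cosh x-1)$, together with Cauchy--Schwarz, which reduce the remainder bound to finitely many exponential moments of the form $\Exp[e^{\pm\Bias_i(\WEF)}]$ (plus a separate Gaussian moment computation). That device is the essential ingredient your argument is missing.
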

\begin{remark}
  For the field $\ZF,$
  in light of Proposition~\ref{prop:ZG}, this lemma also holds if $\Bias_\omega$ is replaced by some $\Bias \in \mathfrak{S}_{j,\epsilon,\delta}$ for some $\epsilon$ and $j.$  The proof is completely general and needs no alteration for this case.  Further, the errors are uniform in $\mathfrak{S}_{j,\epsilon,\delta}$ for fixed choices of $j$ and $\epsilon.$
\end{remark}
\begin{proof}

  We will write the proof for a general $\Bias \in \mathfrak{S}_{j,\epsilon,\delta}$ for some $j \in \N_0$ and some $\epsilon > 0.$
  Suppose that 
  \[
    \Bias(\F) = 
    \sum_{z \in \mathbf{z}} 2\F(z)
    -\sum_{y \in \mathbf{y}} 2\F(y) ,
  \]
 where $\mathbf{z}$ and $\mathbf{y}$ are finite subsets of $\mathcal{D}_{N,\delta}.$

  By the symmetry of the problem, it is enough to establish the one-sided bound:
  \begin{equation}
    \FMC{\Bias}
    \left[ 
      \textstyle{\prod_{i=1}^\ell} \Bias[i](\WEF)
    \right]
    \leq
    \Exp \left[ 
      \textstyle{\prod_{i=1}^\ell} \Bias[i](\mu + \WGF)
    \right]
    +C(\Delta^{\ell+1} \vee N^{-\delta/4}).
    \label{eq:uh1}
  \end{equation}
  We will approximate the increments $\Bias[i]$ by exponentials, in order to apply Assumption~\ref{ass:wusa}.  As we do not suppose anything on the locations of $\left\{ z_i \right\}_1^\ell$ or $\left\{ y_i \right\}_1^\ell,$ we use harmonicity in a nontrivial way to reduce the problem to one in which pairs are well-separated, and we give this argument first.

For each $i = 1,2,\dots,\ell$ we will define a contour $\gamma_i.$   Let $\mathbf{z}' = \mathbf{z} \cup \left\{ z_j \right\}_1^\ell$ and likewise for $\mathbf{y}'.$ We define
  \[
    \gamma_i = \partial\left(\cup_{w \in \mathbf{z}' \cup \mathbf{y}'} \overline{\HB(w,3i)}\right),
  \]
  that is $\gamma_i$ traces the boundary of the union of these balls with a positive orientation.  With this definition $\gamma_0$ is the set $\mathbf{z}' \cup \mathbf{y}'.$  As $\gamma_i$ is a piecewise smooth arc, we have there is a probability measure $\nu_{i}$ supported on $\gamma_i$ so that for any harmonic function $\varphi$ on $\mathcal{D}_{N,\delta/2}$ 
  \[
    \varphi(z_i) = \int_{\gamma_i} \varphi(w) \nu_{i}(dw).
  \]
 Let $M_i$ be a disk automorphism taking $z_i$ to $y_i.$  This map is analytic on $\D$ and has the property that $\dH(w, M_i(w)) = \dH(z_i,y_i)$ for all $w \in \D.$  In particular, $\WGF - \WGF \circ M_i$ is harmonic in $\mathcal{D}_{N,\delta/2}$ for all $N$ sufficiently large.  This allows us to give the representation,
  \[
    \WEF(z_i) - \WEF(y_i) = \int_{\gamma_i} (\WEF(w)-\WEF(M_i(w)))\nu_i(dw).
  \]
  
  Therefore, if we define the biases $\Bias[{(i,w)}]: \F \mapsto 2\F(w) - 2\F(M_i(w)),$ we can write
  \begin{equation}
    \FMC{\Bias}
    \left[ 
      \textstyle{\prod_{i=1}^\ell} \Bias[i](\WEF)
    \right]
    =\int_{\gamma_1}\dots\int_{\gamma_\ell}
    \FMC{\Bias}
    \left[ 
      \textstyle{\prod_{i=1}^\ell} \Bias[(i,w_i)](\WEF)
    \right]
    \nu_i(dw_i).
    \label{eq:uh2}
  \end{equation}
  Commuting the contour integration and the $\FMC$-expectation is easily justified using the inequality
  \begin{align*}
    |\textstyle{\prod_{i=1}^\ell} \Bias[(i,w_i)](\WEF)|
    &\leq \textstyle{\sum_{i=1}^\ell}  |\Bias[(i,w_i)](\WEF)|^\ell \\
    &\ll_\ell \textstyle{\sum_{i=1}^\ell} \cosh(\Bias[(i,w_i)](\WEF)),
  \end{align*}
  and Assumption~\ref{ass:wusa}, once we verify the provisions of that assumption.  %

  In that direction, we claim that for any subset $S \subset [\ell],$ and any points $\left\{ w_i \right\}_{i \in S}$ in the respective supports of $\{\nu_i\},$
  \begin{equation}
    \Bias + \sum_{i \in S} \pm \Bias[(i,w_i)] \in
    \mathfrak{W}^*_{|S|,\delta/2}(\Bias) \subseteq
    \mathfrak{W}^*_{\ell,\delta/2}(\Bias).
    \label{eq:uh3}
  \end{equation}
  By how $\{\gamma_i\}$ are chosen, we have that for any $w_i \in \gamma_i$ and $w_j \in \gamma_j$ with $i \neq j,$
  \[
    3 \leq \dH(w_i, w_j).
  \]
  As $\dH(w_i, M_i(w_i))\leq 1$ for all $i$ we also have that
  \[
    2 \leq \dH(M_i(w_i),w_j), 
    2 \leq \dH(w_i,M_j(w_j)) 
    \text{ and }
    1 \leq \dH(M_i(w_i),M_j(w_j)).
  \]
  This implies that 
  \(
    \Bias + \sum_{i \in S} \Bias[(i,w_i)] \in \mathfrak{W}^*_{\ell,\delta/2}
  \)
  as desired. %

The bottom line is that  if we establish \eqref{eq:uh1} in the case that the pairs $(y_i,z_i)$ are each at least distance $2$ to any other point of $\mathbf{z}' \cup \mathbf{y}',$ then using \eqref{eq:uh2} we obtain 
  \begin{align*}
    \FMC{\Bias}
    \left[ 
      \textstyle{\prod_{i=1}^\ell} \Bias[i](\WEF)
    \right]
    &=\int_{\gamma_1}\dots\int_{\gamma_\ell}
    \FMC{\Bias}
    \left[ 
      \textstyle{\prod_{i=1}^\ell} \Bias[(i,w_i)](\WEF)
    \right]
    \nu_i(dw_i) \\
    &\leq\int_{\gamma_1}\dots\int_{\gamma_\ell}
    \Exp \left[ 
      \textstyle{\prod_{i=1}^\ell} \Bias[(i,w_i)](\mu + \WGF)
    \right]
    \nu_i(dw_i)
    +C(\Delta^{\ell+1} \vee N^{-\delta/2}).
    \intertext{Moreover, by the almost sure harmonicity of $\mu$ and $\WGF,$ we conclude:}
    \FMC{\Bias}
    \left[ 
      \textstyle{\prod_{i=1}^\ell} \Bias[i](\WEF)
    \right]
    &\leq
    \Exp \left[ 
      \textstyle{\prod_{i=1}^\ell} \Bias[i](\mu + \WGF)
    \right]
    +C(\Delta^{\ell+1} \vee N^{-\delta/2}),
  \end{align*}
  as desired.

  Therefore, we have reduced the problem to showing \eqref{eq:uh1} in the case that \eqref{eq:uh3} holds.  To do this, we will replace the moments we wish to calculate by exponentials, to which our assumption applies.  More specifically, we will show that
  \begin{equation}
    \FMC{\Bias}
    \left[ 
      \textstyle{\prod_{i=1}^\ell} \Bias[i](\WEF)
    \right]
    -
    \FMC{\Bias}
    \left[ 
      \textstyle{\prod_{i=1}^\ell} (e^{\Bias[i](\WEF)} - 1)
    \right]
    \ll_{k,\ell,\delta}
    \Delta^{ \ell+1} \vee N^{-\delta/4}.
    \label{eq:uh4}
  \end{equation}
  Having performed this bound, we will be in a position to apply Assumption~\ref{ass:wusa} to the $\FMC$-expectation of exponentials.  In particular, a direct application of Corollary~\ref{cor:emc} shows that  
  \begin{equation}
    \FMC{\Bias}
    \left[ 
      \textstyle{\prod_{i=1}^\ell} (e^{\Bias[i](\WEF)} - 1)
    \right]
    -
    \Exp
    \left[ 
      \textstyle{\prod_{i=1}^\ell} (e^{\Bias[i](\mu + \WGF)} - 1)
    \right]
    \ll_{k,\ell,\delta} N^{-\delta/2} .
    \label{eq:guh}
  \end{equation}
  Therefore it will only remain to prove that for the Gaussian field $\WGF$, we have
  \begin{equation}
    \Exp
    \left[ 
      \textstyle{\prod_{i=1}^\ell} (e^{\Bias[i](\mu +\WGF)} - 1)
    \right]
    -
    \Exp
    \left[ 
      \textstyle{\prod_{i=1}^\ell} \Bias[i](\mu + \WGF)
    \right]
    \ll_{\ell}
    \Delta^{\ell+1}
    \label{eq:uh45}
  \end{equation}
  to complete the proof. Indeed, observe that summing \eqref{eq:uh4}, \eqref{eq:guh} and \eqref{eq:uh45} give \eqref{eq:uh1}.  %

  We begin by showing that \eqref{eq:uh4} holds.
It suffices to show that, for any $1 \leq p \leq \ell$ 
  \begin{equation}
    \FMC{\Bias}
    \biggl\{
    \left[ 
      \textstyle{\prod_{i=1}^p} (e^{\Bias[i](\WEF)} - 1 - \Bias[i](\WEF))
    \right]
    \cdot
    \left| 
      \textstyle{\prod_{i=p+1}^\ell} \Bias[i](\WEF)
    \right|
  \biggr\}
    \ll_{k,\ell,\delta}
    \Delta^{\ell+p} \vee N^{-\delta/4}.
    \label{eq:uh5}
  \end{equation}
  Since $e^{x}-1-x \geq 0$ for all $x \in \R$, the bound \eqref{eq:uh4} follows from this by adding and subtracting $\Bias_i(\WEF)$ inside the second product of \eqref{eq:uh4} and expanding.  The terms that result have the form of \eqref{eq:uh5} up to permutation of the indices of the products.  
  
  To establish \eqref{eq:uh5}, we start by applying Cauchy-Schwarz in the following way:
  \begin{align*}
    \biggl(&\FMC{\Bias}
    \biggl\{
    \left[ 
      \textstyle{\prod_{i=1}^p} (e^{\Bias[i](\WEF)} - 1 - \Bias[i](\WEF))
    \right]
    \cdot
    \left| 
      \textstyle{\prod_{i=p+1}^\ell} \Bias[i](\WEF)
    \right|
    \biggr\}
    \biggr)^2 \\
    \leq
    &\FMC{\Bias}
    \left[ 
      \textstyle{\prod_{i=1}^p} (e^{\Bias[i](\WEF)} - 1 - \Bias[i](\WEF))
    \right] \\
    \cdot&\FMC{\Bias}
    \biggl\{
    \left[ 
      \textstyle{\prod_{i=1}^p} (e^{\Bias[i](\WEF)} - 1 - \Bias[i](\WEF))
    \right]
    \cdot
    \textstyle{\prod_{i=p+1}^\ell} (\Bias[i](\WEF))^2 \biggr\}.
  \end{align*}
  We further bound this expression using the inequalities $e^{x}-1-x \leq 2(\cosh(x) - 1)$ and $x^2 \leq 2(\cosh(x) - 1).$  We simplify notation by writing $\varphi(x) = 2(\cosh(x) - 1).$  We then apply these bounds by writing
  \begin{align*}
    \biggl(&\FMC{\Bias}
    \biggl\{
    \left[ 
      \textstyle{\prod_{i=1}^p} (e^{\Bias[i](\WEF)} - 1 - \Bias[i](\WEF))
    \right]
    \cdot
    \left| 
      \textstyle{\prod_{i=p+1}^\ell} \Bias[i](\WEF)
    \right|
    \biggr\}
    \biggr)^2 \\
    \leq
    &\FMC{\Bias}
    \left[ 
      \textstyle{\prod_{i=1}^p} \varphi(\Bias[i](\WEF))
    \right] 
    \cdot\FMC{\Bias}
    \left[ 
      \textstyle{\prod_{i=1}^\ell} \varphi(\Bias[i](\WEF))
    \right]
    .
  \end{align*}

  Each of the expectations in this bound can be expanded into expectations of sums of exponentials of biases of the same form as in \eqref{eq:uh3}. Hence, by Corollary~\ref{cor:emc}, we can replace $\WEF$ by $\mu + \WGF$ incurring only an additive $N^{-\delta/2}$ error.  Moreover, by elementary manipulations, we will establish that for any $1 \leq p \leq \ell$
  \begin{equation}
    \Exp
    \left[ 
      \textstyle{\prod_{i=1}^p} \varphi(\Bias[i](\mu + \WGF))
    \right]
    \ll_{\ell} \Delta^{2p}
    \label{eq:uh6}
  \end{equation}
  Combining this with the previous displayed equation, we would have that
  \begin{align*}
    \biggl(&\FMC{\Bias}
    \biggl\{
    \left[ 
      \textstyle{\prod_{i=1}^p} (e^{\Bias[i](\WEF)} - 1 - \Bias[i](\WEF))
    \right]
    \cdot
    \left| 
      \textstyle{\prod_{i=p+1}^\ell} \Bias[i](\WEF)
    \right|
    \biggr\}
    \biggr)^2 \\
    \ll_\ell 
    &( \Delta^{2p} + N^{-\delta/2})
    ( \Delta^{2\ell} + N^{-\delta/2}) \\
    \ll_{\ell} & \Delta^{2p+2\ell} \vee N^{-\delta/2}.
  \end{align*}
  Taking square-roots, the estimates \eqref{eq:uh5} and \eqref{eq:uh4} follow provided that we prove \eqref{eq:uh6}. To do so, we begin by using the arithmetic-geometric mean inequality:
  \[
    \Exp
    \left[ 
      \textstyle{\prod_{i=1}^p} \varphi(\Bias[i](\mu + \WGF))
    \right]
    \leq
    \frac{1}{p}
    \sum_{i=1}^p
    \Exp
    \left[ \varphi(\Bias[i](\mu + \WGF))^p
    \right].
  \]
  By Definition~\ref{def:brwlike}, the variable $\Bias[i](\mu + \WGF)$ are Gaussian with mean $\mu(z_i)-\mu(y_i) \ll_k  \dH(z_i,y_i) \leq \Delta$ and variance $\Var(\WGF(z_i) - \WGF(y_i)) \ll \dH(z_i,y_i)^2 \leq \Delta^2.$
   Hence the variable $\Bias[i](\mu + \WGF)/\Delta$ satisfies a uniform Gaussian tail bound with constants depending only on $k$,

  The function $\varphi(x)$ vanishes quadratically near $0,$ so that $\varphi(x)x^{-2}$ can be bounded by $K\cosh(x)$ for some sufficiently large constant $K >0.$  In particular, we have  
  \[
    \varphi(x) \leq \Delta^2 \psi(x/\Delta)
  \]
  where $\psi(x) = K x^2 \cosh(x).$ Hence
  \[
    \Exp
    \left[ \varphi(\Bias[i](\mu + \WGF))^p
    \right] 
    \leq \Delta^{2p}
    \Exp
    \left[ \psi(\Bias[i](\mu + \WGF)/\Delta)^p
    \right] \ll_{k,\ell} \Delta^{2p}. 
  \]
  which completes the proof of \eqref{eq:uh6}.  

It just remains to prove the estimate \eqref{eq:uh45}.  However, the proof here is nearly identical to \eqref{eq:uh4}, so we just sketch it.  Analogously to \eqref{eq:uh4}, we can start by adding and subtracting $\Bias_i(\mu + \WGF)$ inside the first product and expanding.  This reduces the problem to bounding
    \begin{equation}
      \Exp
    \biggl\{
    \left[ 
      \textstyle{\prod_{i=1}^p} (e^{\Bias[i](\mu+\WGF)} - 1 - \Bias[i](\mu+\WGF))
    \right]
    \cdot
    \left| 
      \textstyle{\prod_{i=p+1}^\ell} \Bias[i](\mu + \WGF)
    \right|
  \biggr\}
    \ll_{k,\ell,\delta}
    \Delta^{\ell+1}.
    \label{eq:uh7}
  \end{equation}
  The exact same bounds used in reducing \eqref{eq:uh5} to \eqref{eq:uh6} can be applied to reduce \eqref{eq:uh7} to \eqref{eq:uh6}, which completes the proof.
\end{proof}

By piecing together various local perturbations, it is therefore possible to remove the requirement that $z_i$ and $y_i$ be close in the previous lemma, at the expense of a larger error term.  
\begin{proposition}
  Fix $\ell \in \mathbb{N},$ $0 < \alpha < 1$ and $\omega \in \Omega,$ and let $\Bias=\Bias_\omega.$ 
 Suppose Assumption $\WUSA$ (\ref{ass:wusa}) holds.
  There is a constant $C=C(\ell,\alpha,\delta)$ so that the following holds. Let $\left\{ z_i \right\}_1^\ell$ and $\left\{ y_i \right\}_1^\ell$ be points in $\mathcal{D}_{N,\delta}.$ 
  Define 
  \[
    \Bias[i] : \F \mapsto 2\F(z_i) - 2\F(y_i).
  \]
  Let $\mu(\cdot) = \Exp\left[ \WGF(\cdot)\Bias{\WGF} \right]$ be the mean of $\WGF$ under the bias $\Bias.$  Then
  \[
    \left|
    \FMC{\Bias}
    \left[ 
      \textstyle{\prod_{i=1}^\ell} \Bias[i](\WEF)
    \right]
    -
    \Exp \left[ 
      \textstyle{\prod_{i=1}^\ell} \Bias[i](\mu + \WGF)
    \right]
    \right|
    \leq Ce^{-(\log N)^\alpha}.
  \]
  \label{prop:mixedmoments}
\end{proposition}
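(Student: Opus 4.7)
The plan is to iterate Lemma \ref{lem:uberharmonic} by subdividing each potentially long increment $\Bias[i] : \F \mapsto 2\F(z_i) - 2\F(y_i)$ into a telescoping sum of short increments along a path from $z_i$ to $y_i$ inside $\mathcal{D}_{N,\delta}$. The key enabling observation is that, although Euclidean-small, the set $\mathcal{D}_{N,\delta}$ has hyperbolic diameter only $O(\log N)$: using $\sinh(\dH(z,w)/2) = |z-w|/\sqrt{(1-|z|^2)(1-|w|^2)}$, the extremal contribution comes from the angular direction at the outer boundary $r = 1-N^{-1+\delta}$, $|\theta| \leq N^{-\delta}$, which is of hyperbolic size $\asymp (1-2\delta)\log N$. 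Consequently, for any scale $\Delta_0 \leq 1$, each pair $(z_i,y_i)$ can be connected by $K_i \leq C_\delta \log N/\Delta_0$ intermediate points $z_i = w_{i,0}, \ldots, w_{i,K_i} = y_i$ in $\mathcal{D}_{N,\delta}$ with $\dH(w_{i,k-1},w_{i,k}) \leq \Delta_0$, for instance by moving radially first and then angularly within the polar box.

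Set $\Bias[{i,k}] : \F \mapsto 2\F(w_{i,k-1}) - 2\F(w_{i,k})$ so that $\Bias[i] = \sum_k \Bias[{i,k}]$. Distributing the products gives
\[
\prod_{i=1}^\ell \Bias[i](\F) = \sum_{k_1=1}^{K_1}\cdots\sum_{k_\ell=1}^{K_\ell}\prod_{i=1}^\ell \Bias[{i,k_i}](\F),
\]
and the same identity holds with $\F$ replaced by $\mu + \WGF$. Each summand consists of $\ell$ increments whose endpoints are at hyperbolic distance at most $\Delta_0 \leq 1$, so Lemma \ref{lem:uberharmonic} applies and yields
\[
\left|\FMC{\Bias}\Bigl[\textstyle\prod_i \Bias[{i,k_i}](\WEF)\Bigr] - \Exp\Bigl[\textstyle\prod_i \Bias[{i,k_i}](\mu+\WGF)\Bigr]\right| \leq C_\ell(\Delta_0^{\ell+1} \vee N^{-\delta/4}).
\]
Summing over all $\prod_i K_i \leq (C_\delta \log N/\Delta_0)^\ell$ multi-indices and invoking the triangle inequality bounds the full error by
\[
C'(\log N)^\ell \Delta_0 \;\vee\; C'(\log N)^\ell \Delta_0^{-\ell} N^{-\delta/4}.
\]

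It then suffices to optimize the scale. Choosing $\Delta_0 = e^{-(\log N)^\alpha}/(\log N)^\ell$ makes the first branch $O(e^{-(\log N)^\alpha})$. For the second branch, $\Delta_0^{-\ell} \asymp (\log N)^{\ell^2} e^{\ell (\log N)^\alpha}$, so it is at most $(\log N)^{O_\ell(1)} e^{\ell(\log N)^\alpha - (\delta/4)\log N}$, which is $\ll e^{-(\log N)^\alpha}$ for any $\alpha < 1$ once $N$ is large, because $(\log N)^\alpha = o(\log N)$. The principal point to verify is thus not technical but conceptual: the combination of (a) the logarithmic (rather than polynomial) hyperbolic diameter of $\mathcal{D}_{N,\delta}$, which keeps the combinatorial explosion $\prod K_i$ manageable, with (b) the fact that Lemma \ref{lem:uberharmonic} provides $\Delta_0^{\ell+1}$ rather than the naive $\Delta_0^\ell$, which is exactly the gain that permits $(\log N)^\ell \Delta_0$ to be made sub-polynomially small.
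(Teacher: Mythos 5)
Your proof is essentially the paper's proof: subdivide each long increment into $O(\log N/\Delta_0)$ short increments along a path of hyperbolic length $O(\log N)$ inside $\mathcal{D}_{N,\delta}$, expand the product over multi-indices, apply Lemma~\ref{lem:uberharmonic} to each summand, and balance the number of summands against the per-summand error. Your choice of $\Delta_0 = e^{-(\log N)^\alpha}/(\log N)^\ell$ is in fact a bit cleaner than the paper's: the paper takes spacing exactly $e^{-(\log N)^\alpha}$, which leaves a residual $(\log N)^\ell$ factor that must be absorbed by working with a slightly larger exponent $\alpha' \in (\alpha,1)$, while your choice eliminates it directly.

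One detail needs tightening. You write that a path with $K_i \ll \log N/\Delta_0$ steps can be constructed ``by moving radially first and then angularly within the polar box.'' As stated this is ambiguous and, if read literally as ``go radially to the target radius, then angularly,'' it fails: an angular arc of width $N^{-\delta}$ at the outer radius $r = 1-N^{-1+\delta}$ has hyperbolic length $\asymp N^{-\delta}\cdot N^{1-\delta} = N^{1-2\delta}$, which is \emph{polynomial}, not logarithmic, and would make $\prod K_i$ too large. The correct polar path is radial--angular--radial: move radially inward to the inner boundary $r = 1-N^{-\delta}$, traverse angularly there (hyperbolic length $O(1)$ since the metric density is $\asymp N^\delta$), then move radially outward. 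This is what the paper achieves by deforming geodesics to stay in $\mathcal{D}_{N,\delta}$. The claim $K_i \ll \log N/\Delta_0$ is true, but the phrase ``for instance by moving radially first and then angularly'' does not, on its own, justify it.
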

\begin{proof}
  For each $1 \leq i \leq \ell,$
  let $\gamma_i$ be the geodesic from $y_i$ to $z_i$ parameterized by hyperbolic arc length.  This geodesic necessarily lies in the wedge 
  \[
  \left\{ 
    re^{i(\theta+\tfrac\pi 2)}
    \ |\ 0 \leq r \leq 1 - N^{-1+\delta},
    |\theta| \leq N^{-\delta}
  \right\},
  \]
  which is geodesically convex (compare with the definition of $\mathcal{D}_{N,\delta}$ \eqref{eq:domain}, for which the radius $r \ge 1 - N^{-\delta} $ as well). 
  It may be necessary to deform $\gamma_i$ to stay within $\mathcal{D}_{N,\delta},$ and it can be performed by replacing any segment in 
  \[
  \left\{ 
    re^{i(\theta+\tfrac\pi 2)}
    \ |\ 0 \leq r \leq 1 - N^{-\delta},
    |\theta| \leq N^{-\delta}
  \right\},
  \]
  by a segment that travels along  the curve $r= 1 - N^{-\delta}$,   $|\theta| \leq N^{-\delta}$. This inner curved boundary only has length $O(1),$ as its angular length is $O(N^{-\delta})$ and for $z$ on this arc, the hyperbolic metric is the euclidean one scaled by $(1-|z|)^{-1} = N^\delta$.  Hence, all curves that arise this way have length $O(\log N),$ as $\mathcal{D}_{N,\delta}$ is contained in a hyperbolic ball of radius $\log N.$

  Let 
   \(
   0 = 
   t^{(i)}_0
   \leq
   t^{(i)}_1
   \leq
   t^{(i)}_2
   \leq 
   \cdots
   \leq 
   t^{(i)}_{p_i}
   \)
  be evenly spaced points, with spacing $e^{-(\log N)^{\alpha}},$ save for possibly the last, which may be shorter.  For any $1 \leq j \leq p_i$ define
  \(
  \Bias[(i,j)] : \F \mapsto 
  2\F(\gamma_i(t^{(i)}_{j}))
  -2\F(\gamma_i(t^{(i)}_{j-1})).
  \)
  Then we can trivially bound
  \begin{align*}
    &|\FMC{\Bias}
    \left[ 
      \textstyle{\prod_{i=1}^\ell} \Bias[i](\WEF)
    \right]
    -
    \Exp \left[ 
      \textstyle{\prod_{i=1}^\ell} \Bias[i](\mu + \WGF)
    \right]| \\
    \leq
    \sum_{j_1 =1}^{p_1}
    \cdots
    \sum_{j_\ell =1}^{p_\ell}
    &
    |\FMC{\Bias}
    \left[ 
      \textstyle{\prod_{i=1}^\ell} \Bias[(i,j_i)](\WEF)
    \right]
    -
    \Exp \left[ 
      \textstyle{\prod_{i=1}^\ell} \Bias[(i,j_i)](\mu + \WGF)
    \right]
    |.
  \end{align*}
  To this difference, we now apply Lemma~\ref{lem:uberharmonic}.  Thus the summands can be uniformly controlled by $O(e^{-(\ell+1)(\log N)^{\alpha}}).$  The number of summands, meanwhile is at most $O( e^{ \ell (\log N)^{\alpha} + \ell \log\log N}).$ 
\end{proof}

\subsection{Estimating $\Exp[ \one[Z > 0]]$}
\label{sec:2p1}

Using these field moments, we can estimate the conditional probability of $\BL$, \eqref{good_event}.  For the remainder of this section, we suppose Assumption $\WUSA[2]$ holds (\ref{ass:wusa}). 
\begin{lemma}
  For any $\epsilon > 0,$ by making $N$ sufficiently large, we have that uniformly in $\omega \in \Omega,$
  \[
    \Exp[e^{\Bias[\omega](\WGF)}](1-\epsilon) \leq \Exp[\MFI(\omega)] \leq \Exp[e^{\Bias[\omega](\WGF)}](1+\epsilon).
  \]
  \label{lem:1pLB}
\end{lemma}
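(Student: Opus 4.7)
The plan is to treat the two inequalities separately, in each case reducing to estimates from Assumption~\ref{ass:wusa} (in the form $\WUSA[2]$) and the field moment calculus of Section~\ref{sec:fmc}. For the upper bound, since the indicator in the definition of $\MFI(\omega)$ is at most one, $\MFI(\omega) \le e^{\Bias[\omega](\WEF)}$, and Assumption~\ref{ass:wusa} applied directly to the unperturbed bias $\Bias[\omega] \in \mathfrak{W}_{0,\delta}(\Bias[\omega])$ gives $\Exp[e^{\Bias[\omega](\WEF)}] = \Exp[e^{\Bias[\omega](\WGF)}](1+O(N^{-\delta}))$, which for $N$ sufficiently large is below $(1+\epsilon)\Exp[e^{\Bias[\omega](\WGF)}]$, uniformly in $\omega \in \Omega$.

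For the matching lower bound, factor $\Exp[\MFI(\omega)] = \Exp[e^{\Bias[\omega](\WEF)}] \cdot (1 - \FMC{\Bias[\omega]}[\WEF \notin \BL[\omega]])$ and combine with the same asymptotic; it therefore suffices to prove $\FMC{\Bias[\omega]}[\WEF \notin \BL[\omega]] \to 0$ uniformly in $\omega \in \Omega$ as $N \to \infty$. A union bound over the $O(\eta)$ conditions defining $\BL[\omega]$, followed by Chebyshev's inequality, reduces this to bounding the biased second moment $\FMC{\Bias[\omega]}[X_k(\WEF)^2]$ for each $r < k \leq \eta$, where $X_k(\F) := \F(\omega\zeta_{b_k}) - \F(i\zeta_{b_r}) - (b_k - b_r)$.

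Apply Lemma~\ref{lem:uberharmonic} with $\ell = 2$ to each such $X_k^2$: the biased second moment is well-approximated by its Gaussian counterpart $(X_k(\mu))^2 + \Var(\WGF(\omega\zeta_{b_k}) - \WGF(i\zeta_{b_r}))$, where $\mu(\cdot) = \Exp[\WGF(\cdot)\Bias[\omega](\WGF)]$ is the biased Gaussian mean. Both terms admit explicit estimates from the BRW-like covariance structure (Definition~\ref{def:brwlike}(d)): the variance is uniformly $O(n)$, and the mean shift $X_k(\mu)$ is computed from the $\tfrac{1}{2}\min\{-\log|\sin(\tfrac{\theta_1-\theta_2}{2})|,h,j\}$ formula with $\theta_1 = \arg\omega$ and $\theta_2 = \pi/2$. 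Summing the resulting per-$k$ Chebyshev bounds over the $O(\eta)$ indices yields $\FMC{\Bias[\omega]}[\WEF \notin \BL[\omega]] = O(\eta^{-1}) \to 0$. The principal technical obstacle is the uniformity in $\omega$: near the boundary of $\Omega$, where the angular distance from $i$ is of order $N^{-\delta}$, the effective branching depth between the $\omega$-ray and the $i$-ray drops to $\delta\log N$, and the parameters $\eta$ and $b_r$ fixed in Section~\ref{sec:lb} must be calibrated so that this worst-case mean shift is absorbed within the $\eta\sqrt{n}$ tolerance.
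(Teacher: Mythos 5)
Your overall strategy matches the paper's: trivially bound $\MFI(\omega) \le e^{\Bias[\omega](\WEF)}$ and invoke Assumption~\ref{ass:wusa} for the upper bound; for the lower bound, reduce to showing the $\Bias[\omega]$-biased probability of $\{\WEF\notin\BL[\omega]\}$ is small via a union bound, Chebyshev's inequality, and a Gaussian comparison of biased first and second moments of the increments $\WEF(\omega\zeta_{b_k}) - \WEF(i\zeta_{b_r})$. Two issues with your write-up.

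First, Lemma~\ref{lem:uberharmonic} is not the right tool for the biased moments here: it requires each pair $(z_i,y_i)$ defining the bias increments to satisfy $\dH(z_i,y_i)\le \Delta\le 1$, but $\dH(\omega\zeta_{b_k}, i\zeta_{b_r})$ is of order $b_k - b_r$, which can be as large as $n_0$. You should instead invoke Proposition~\ref{prop:mixedmoments}, which is derived from that lemma precisely by chopping the long geodesic into short segments, at the cost of a weaker (but still sufficient) error term $Ce^{-(\log N)^\alpha}$. This is what the paper uses.

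Second, and more substantially, the uniformity concern you flag at the end is a genuine obstacle, not one that is resolved by tuning $\eta$ and $b_r$. Writing $m_\theta = -\log|\sin((\arg\omega - \tfrac{\pi}{2})/2)|$, a direct computation from Definition~\ref{def:brwlike}(d) gives that the biased mean of $\WEF(\omega\zeta_{b_k}) - \WEF(i\zeta_{b_r}) - (b_k - b_r)$ equals $2(b_r - m_\theta)_+ + O(1)$, not $O(1)$. For $\omega$ with $|\arg\omega-\tfrac{\pi}{2}|$ of order $N^{-\delta}$ --- which is all but a vanishing fraction of $\Omega$ --- one has $m_\theta\approx\delta\log N$ while $b_r\approx 2\delta\log N$, so this mean shift is of order $\delta\log N$, which is not absorbed by the barrier tolerance $\eta\sqrt n\asymp\eta\sqrt{\log N}$ for slowly growing $\eta$. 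The paper's own assertion that ``the mean and variance of this increment are $b_k-b_r$ up to a uniformly bounded $O(1)$ error'' is, by this computation, valid only when $m_\theta\ge b_r$, i.e., when $|\arg\omega-\tfrac{\pi}{2}|\lesssim N^{-2\delta}$. So both your proposal and the paper's proof leave this point open; a repair --- for instance, centering the barrier in $\BL[\omega]$ at $\omega\zeta_{b_r}$ rather than $i\zeta_{b_r}$, which removes the $\omega$-dependent shift, or building the $\omega$-dependent shift directly into the definition of $\BL[\omega]$ --- must then be carried through the downstream second-moment argument in the proof of Theorem~\ref{thm:subharmoniclb}. Asserting that the parameters ``must be calibrated'' does not by itself establish the lemma.
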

\begin{proof}
  The upper bound proceeds by a trivial bound and a direct application of Assumption~\ref{ass:wusa}:
  \[
    \Exp[\MFI(\omega)] 
    \leq 
    \Exp[e^{\Bias[\omega](\WEF)}] 
    =
    \Exp[e^{\Bias[\omega](\WGF)}]
    (1+O(N^{-\delta})).
  \]
  For the lower bound, by \eqref{rv_Y},  it suffices that show that,    when  the parameter $N$ is sufficiently large, $ \FMC[\one[{\WEF \in \BL[\omega]}]] \ge 1-\epsilon$, where 
 $ \FMC =  \FMC_{\Bias[\omega]}$.
  By a union bound, we simply estimate
  \begin{align*}
       \FMC[\one[\WEF \notin \BL]]
    &\leq
    \sum_{k=r+1}^{\eta}
    \FMC\left[ \one[ 
      |\WEF(\omega\zeta_{b_k}) - \WEF(i\zeta_{b_r}) - (b_{k}-b_r)| > \eta \cdot n^{1/2}
    ]\right] \\
    &\leq
    \sum_{k=r}^{\eta}
    \frac
    {\FMC[ (\WEF(\omega\zeta_{b_k}) - \WEF(i\zeta_{b_r})  - (b_{k}-b_r))^2 ] }
    {\eta^2 n}. \\
    \intertext{By Proposition~\ref{prop:mixedmoments}, we can compute the first and second moments of $\WEF(\omega\zeta_{b_k}) - \WEF(i\zeta_{b_r})$ under $\FMC.$  Using this Gaussian comparison, the mean and variance of this increment are $b_k - b_r$ up to a uniformly bounded $O(1)$ error.  
    Therefore, this $\FMC$-expectation is bounded above by $n + O(1),$ which leads to
    }
    \FMC[\one[\WEF \notin \BL]]
    &\ll
    \sum_{k=r+1}^{\eta}
    \frac
    {n}
    {\eta^2 n} \leq \eta^{-1}.
  \end{align*}
  This concludes the lower bound as $\eta \to \infty.$
\end{proof}

We now turn to estimating $\Exp[\MFI(\omega_1)\MFI(\omega_2)]$ for various values of $(\omega_1,\omega_2).$  There will be two regimes of $|\omega_1-\omega_2|$ in which we make different estimates.  We introduce the midpoint $\HM=\HM(\omega_1,\omega_2),$ defined as the integer closest to $-\log |\omega_1-\omega_2|$ that is also at least $n_0.$  This is roughly the height at which the $\omega_1$ and $\omega_2$ rays branch.  In the first regime, where $\HM < b_r,$ the rays have branched early enough that there is essentially no correlation between $\MFI(\omega_1)$ and $\MFI(\omega_2).$  Otherwise, we must appropriately take advantage of the barrier information in $\MFI(\omega)$ to assure the correlation is not too high.

The estimate for small $\HM$ is no more complicated than the estimates in Lemma~\ref{lem:1pLB}.
\begin{lemma}
  For all $\epsilon > 0$ and all $\omega_1,\omega_2 \in \Omega$ with $\HM(\omega_1,\omega_2) \leq (1-\epsilon) b_r$, if $N$ is  sufficiently large, then
  \[
    \Exp[\MFI(\omega_1)\MFI(\omega_2)]
    \leq
    \Exp[\MFI(\omega_1)]\Exp[\MFI(\omega_2)](1+\epsilon).
  \]
  \label{lem:finefield2pUB1}
\end{lemma}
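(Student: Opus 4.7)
The strategy has three steps: (i) drop the barrier indicators and bound the LHS by the unrestricted exponential moment, (ii) pass from $\WEF$ to the idealized Gaussian $\WGF$ via the Gaussian comparison machinery of Section~\ref{sec:fmc}, and (iii) finish with a direct covariance computation using Definition~\ref{def:brwlike}(d). Step (i) is trivial: since both indicators are at most $1$,
\[
  \Exp[\MFI(\omega_1)\MFI(\omega_2)] \leq \Exp\bigl[e^{\Bias[\omega_1](\WEF)+\Bias[\omega_2](\WEF)}\bigr].
\]
The goal of step (ii) is to argue that this right side equals $\Exp[e^{\Bias[\omega_1](\WGF)+\Bias[\omega_2](\WGF)}](1+o(1))$.

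For the Gaussian computation of step (iii), write $A_i=\WGF(\omega_i\zeta_{n_0})$ and $C=\WGF(i\zeta_{b_r})$. Since $|\omega_i-i|\leq N^{-\delta}$ for $\omega_i\in\Omega$, we have $-\log|\sin((\theta_i-\pi/2)/2)|\geq b_r+O(1)$, while $-\log|\sin((\theta_1-\theta_2)/2)|=\HM+O(1)$ by the definition of $\HM$. Hence Definition~\ref{def:brwlike}(d) yields
\[
  \Cov(A_1,A_2) = \tfrac{\HM}{2} + O(1),\quad \Cov(A_i,C) = \tfrac{b_r}{2} + O(1),\quad \Var(C) = \tfrac{b_r}{2} + O(1).
\]
Since $\Bias[\omega_i](\WGF) = 2A_i - 2C$, expanding the bilinear form gives
\[
  \Cov\bigl(\Bias[\omega_1](\WGF), \Bias[\omega_2](\WGF)\bigr) = 4\Cov(A_1,A_2) - 4\Cov(A_1,C) - 4\Cov(A_2,C) + 4\Var(C) = 2\HM - 2b_r + O(1),
\]
so that, by the Gaussian MGF identity,
\[
  \frac{\Exp[e^{\Bias[\omega_1](\WGF)+\Bias[\omega_2](\WGF)}]}{\Exp[e^{\Bias[\omega_1](\WGF)}]\,\Exp[e^{\Bias[\omega_2](\WGF)}]} = e^{2\HM - 2b_r + O(1)} \leq e^{-2\epsilon b_r + O(1)} = o(1),
\]
using $\HM\leq(1-\epsilon)b_r$ and $b_r\asymp\delta\log N\to\infty$.

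Combining these with Lemma~\ref{lem:1pLB}, which for $N$ large gives $\Exp[\MFI(\omega_i)]\geq(1-\epsilon/3)\Exp[e^{\Bias[\omega_i](\WGF)}]$, I will obtain
\[
  \frac{\Exp[\MFI(\omega_1)\MFI(\omega_2)]}{\Exp[\MFI(\omega_1)]\,\Exp[\MFI(\omega_2)]}\leq \frac{(1+o(1))\,e^{2\HM - 2b_r + O(1)}}{(1-\epsilon/3)^2} = o(1) \leq 1+\epsilon,
\]
which is the desired estimate. The delicate point is step (ii): the combined bias places weight $-4$ at the single point $i\zeta_{b_r}$ and does not literally fit the pairing class $\mathfrak{W}_{\ell,\delta}(\Bias[\omega_1])$ for which Assumption~\ref{ass:wusa} is formulated. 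I plan to resolve this using the field moment calculus of Section~\ref{sec:fmc}: write $\Exp[e^{\Bias[\omega_1](\WEF)+\Bias[\omega_2](\WEF)}] = \Exp[e^{\Bias[\omega_1](\WEF)}]\,\FMC{\Bias[\omega_1]}[e^{\Bias[\omega_2](\WEF)}]$, Taylor expand the inner exponential, apply Proposition~\ref{prop:mixedmoments} (which rests only on $\WUSA[2]$) to each polynomial moment $\FMC{\Bias[\omega_1]}[\Bias[\omega_2](\WEF)^k]$ to replace $\WEF$ by the shifted Gaussian $\mu_1+\WGF$, and resum; convergence of the series is ensured by the Gaussian tail of $\Bias[\omega_2](\mu_1+\WGF)$ together with the exponentially small error $e^{-(\log N)^{\alpha}}$ from Proposition~\ref{prop:mixedmoments}.
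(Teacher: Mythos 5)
Your step (iii) covariance computation contains a genuine error that invalidates the argument.  You assert that for $\omega_i \in \Omega$ one has $-\log|\sin((\theta_i - \pi/2)/2)| \geq b_r + O(1)$, and conclude $\Cov(A_i,C) = \tfrac{b_r}{2} + O(1)$ uniformly.  But the membership condition for $\Omega$ only gives $|\theta_i - \pi/2| < N^{-\delta}$, hence $-\log|\sin((\theta_i-\pi/2)/2)| > \delta\log N + O(1)$, whereas $b_r = 2\delta\log N + O(1)$.  So the correct lower bound is $b_r/2 + O(1)$, not $b_r + O(1)$; you are off by a factor of two.  Once this is corrected, $\Cov(A_i,C) = \tfrac12 \min\{-\log|\sin((\theta_i-\pi/2)/2)|,\,b_r\} + O(1)$ genuinely depends on $\omega_i$ and ranges from about $b_r/4$ to $b_r/2$.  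For a pair with $|\theta_i - \pi/2| \asymp e^{-b_r/2}$ (both same sign) and $|\theta_1-\theta_2| \asymp e^{-(1-\epsilon)b_r}$ one gets $\Cov(\Bias[\omega_1](\WGF),\Bias[\omega_2](\WGF)) = 2(1-\epsilon)b_r + O(1) \to +\infty$, so your claimed value $2\HM - 2b_r + O(1)$, and in particular its sign, is wrong, and the factorization of the Gaussian moment generating function fails.

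The paper's proof proceeds differently at this exact point and avoids your computation entirely.  Rather than expanding $\Cov(\Bias_1,\Bias_2)$ into four individual covariances, it invokes Definition~\ref{def:brwlike}(d) to conclude that
\[
  \Exp[\WGF(\xi_1\omega_1)\WGF(\xi_2\omega_2)] = -\tfrac12\log\bigl|\sin\bigl(\tfrac{\arg(\omega_1\omega_2^{-1})}{2}\bigr)\bigr| + K(\arg\omega_1,\arg\omega_2) + o(1)
\]
holds \emph{uniformly in the four choices} $\xi_i \in \{\zeta_{b_r},\zeta_{n_0}\}$, precisely because $\HM \leq (1-\epsilon)b_r$ forces the minimum in (d) to land on the $-\log|\sin|$ term for all of these heights.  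Since all four terms in the covariance expansion of the two biases then coincide up to $o(1)$, the bilinearity gives $\Cov(\Bias_1(\WGF),\Bias_2(\WGF)) = o(1)$ by pure cancellation; no explicit value is ever computed.  This cancellation crucially requires that the base point of $\Bias[\omega_i]$ move with $\omega_i$ (i.e.\ lie on the ray through $\omega_i$, not the fixed ray through $i$), which is the structural fact your decomposition misses by treating $C = \WGF(i\zeta_{b_r})$ as a shared anchor.

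Separately, your resolution of step (ii) by Taylor-expanding $\FMC[{\Bias[\omega_1]}][e^{\Bias[\omega_2](\WEF)}]$ and invoking Proposition~\ref{prop:mixedmoments} term by term does not close, because the constant $C = C(\ell,\alpha,\delta)$ in that proposition is not controlled in $\ell$, so the errors $C(\ell,\alpha,\delta)e^{-(\log N)^\alpha}/\ell!$ cannot be summed.  The paper sidesteps this as well: after checking via the hyperbolic law of cosines that all points appearing in $\Bias[\omega_1]+\Bias[\omega_2]$ are pairwise separated by a distance going to infinity (using $|\theta| \gg e^{-(1-\epsilon)b_r}$), it applies Assumption~\ref{ass:wusa} directly to the combined bias, obtaining the Gaussian comparison in one step.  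You should adopt both of these mechanisms rather than the explicit covariance formula and the series resummation.
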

\begin{proof}
  We estimate the left hand side by the trivial bound 
  \[
    \Exp[\MFI(\omega_1)\MFI(\omega_2)]
    \leq \Exp[e^{\Bias[\omega_1](\WEF)+\Bias[\omega_2](\WEF)}].
  \]
  This bias $\Bias[\omega_1] + \Bias[\omega_2]$ must have all points separated by a distance independent of $n$ to apply Assumption~\ref{ass:wusa}.  
  
  For a hyperbolic triangle with side lengths $a,b,c$ with $\theta$ the angle opposite $a,$ the hyperbolic law of cosines says that
  \[
    \cosh a = 
    \frac{\cosh(b+c)}{2}(1-\cos\theta)
    +\frac{\cosh(b-c)}{2}(1+\cos\theta).
  \]
  Hence, we can derive a formula for the $\cosh$ of the distance between $\xi_1\omega_1$ and $\xi_2 \omega_2$ 
  for any $\xi_{i} \in \left\{ \zeta_{b_r}, \zeta_{n_0} \right\}$ for $i=1,2.$  Moreover, as $N^{-\delta} > |\theta| \gg e^{ -(1-\epsilon)\cdot b_r},$ we have in all cases that 
  \[
    \cosh( \dH(\xi_1\omega_1,\xi_2 \omega_2)) \gg e^{2\epsilon b_r} \to \infty.
  \]

  Hence we have that
  \[
    \Exp[\MFI(\omega_1)\MFI(\omega_2)]
    \leq \Exp[e^{\Bias[\omega_1](\WGF)+\Bias[\omega_2](\WGF)}](1+O(N^{-\delta})).
  \]
  As this is a Gaussian expectation, it suffices to compute covariances of the two biases to show the expectation splits: specifically, the covariance $\Exp\Bias[\omega_1](\WGF)\Bias[\omega_2](\WGF).$   Using part (c) of Definition~\ref{def:brwlike} and the assumption on $\HM(\omega_1,\omega_2)$ in the statement of the lemma, we have that
  \[
    \Exp \WGF(\xi_1 \omega_1)\WGF(\xi_2 \omega_2)
    = -\frac{1}{2}\log|\sin\left(\tfrac{\arg(\omega_1\omega_2^{-1})}{2}\right)|
    +K(\arg\omega_1,\arg\omega_2) + o(1)
  \]
  for any $\xi_{i} \in \left\{ \zeta_{b_r}, \zeta_{n_0} \right\}$ for $i=1,2.$
  Moreover, the error $o(1)$ is uniform in $\omega_{i},$ and so we get that
  \[
    \Exp[e^{\Bias[\omega_1](\WGF)+\Bias[\omega_2](\WGF)}]
    =
    \Exp[e^{\Bias[\omega_1](\WGF)}]\Exp[e^{\Bias[\omega_2](\WGF)}](1+o(1)).
  \]
  Hence, applying Lemma~\ref{lem:1pLB}, we can conclude that
  \[
    \Exp[\MFI(\omega_1)\MFI(\omega_2)]
    \leq
    \Exp[\MFI(\omega_1)]\Exp[\MFI(\omega_2)](1+o(1))
  \]
  uniformly in $(\omega_1,\omega_2)$ satisfying the hypotheses of the lemma.
\end{proof}

This lemma covers all but a vanishing fraction of pairs $(\omega_1,\omega_2)$ we need to consider.  However,  we must also assure that the remaining terms are not too correlated.  This is the content of the following lemma.
\begin{lemma}
  There is a constant $C>0$ so that for all $\omega_1,\omega_2 \in \Omega$ with $\HM \geq \frac12 b_r$ and all $N$ sufficiently large
  \[
    \Exp[\MFI(\omega_1)\MFI(\omega_2)]
    \leq 
    C\Exp[\MFI(\omega_1)]\Exp[\MFI(\omega_2)]
    e^{\HM-b_r + {n}{\eta}^{-1} + \eta n^{1/2}}.
  \]
  \label{lem:finefield2pUB2}
\end{lemma}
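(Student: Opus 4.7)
The plan is to combine a Gaussian comparison via Assumption~\ref{ass:wusa} with a single ballot-type estimate at the branching height $\HM$. First I would bound
\begin{equation*}
\MFI(\omega_1)\MFI(\omega_2) \le e^{\Bias[\omega_1](\WEF) + \Bias[\omega_2](\WEF)} \one[\WEF \in \BL[\omega_1]],
\end{equation*}
and then weaken $\one[\BL[\omega_1]]$ to the single constraint $|\WEF(\omega_1\zeta_{b_{k^*}}) - \WEF(i\zeta_{b_r}) - (b_{k^*} - b_r)| \le \eta\sqrt{n}$, where $k^*$ is the largest index with $b_{k^*} \le \HM$; by construction $\HM - b_{k^*} \le n_0/\eta$. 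This reduces the task to controlling a single sub-event under the measure biased by $e^{\Bias[\omega_1](\WEF) + \Bias[\omega_2](\WEF)}$.

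Under this biased measure, Lemma~\ref{lem:means} (applied to $\WGF$) shows that $\WGF$ acquires a mean shift $\mu$ which, by Definition~\ref{def:brwlike}(d), satisfies $\mu(\omega_1\zeta_{b_k}) - \mu(i\zeta_{b_r}) = 2(b_k - b_r) + O(1)$ for $b_r \le b_k \le \HM$ -- the effective drift is doubled relative to the one-ray setup, precisely because the bias adds a contribution from the nearly-shared $\omega_2$-ray. Passing from $\WEF$ to $\WGF + \mu$ via the field moment calculus of Section~\ref{sec:fmc} (approximating the indicator by a suitable combination of exponentials in the spirit of Lemma~\ref{lem:uberharmonic}), the weakened barrier event translates into the statement that the Gaussian increment $\WGF(\omega_1\zeta_{b_{k^*}}) - \WGF(i\zeta_{b_r})$ -- of variance $(b_{k^*} - b_r)/2 + O(1)$ and biased mean $\approx 2(\HM - b_r)$ -- lies in a window of width $\eta\sqrt{n}$ centered at $b_{k^*} - b_r \approx (\HM - b_r) - O(n/\eta)$ below its mean. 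A direct Gaussian tail calculation then bounds this probability by $C e^{-(\HM - b_r) + O(n/\eta + \eta\sqrt{n})}$, where the $n/\eta$ accounts for the gap $\HM - b_{k^*}$ and the $\eta\sqrt{n}$ for the window width.

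Combining this ballot factor with the unconstrained Gaussian moment generating function
\begin{equation*}
\Exp\left[e^{\Bias[\omega_1](\WGF) + \Bias[\omega_2](\WGF)}\right] \asymp e^{2(n_0 - b_r) + 2(\HM - b_r)},
\end{equation*}
computed from Definition~\ref{def:brwlike}(d), and noting that the first factor is comparable to $\Exp[\MFI(\omega_1)]\Exp[\MFI(\omega_2)]$ by Lemma~\ref{lem:1pLB}, delivers the stated bound: the naive drop of both indicators would overshoot by a factor $e^{\HM - b_r}$, and that surplus is exactly what the ballot estimate recovers. The main obstacle is that Assumption~\ref{ass:wusa} offers no direct control of indicator events, only of exponential moments; the indicator must therefore be sandwiched between exponentials with tilt strength calibrated to the window width $\eta\sqrt{n}$, and one must verify that the tilted biases still lie in an admissible class like $\mathfrak{W}^*_{\ell, \delta}$ with constants uniform in $(\omega_1, \omega_2)$. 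Propagating these uniform constants through the ballot estimate is what is ultimately responsible for the $\eta\sqrt{n}$ and $n/\eta$ losses in the exponent.
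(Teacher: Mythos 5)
Your high-level strategy is the right one and your exponent bookkeeping is correct (in particular the biased drift $\approx 2(b_k-b_r)$, variance $\approx \tfrac12(b_k-b_r)$, and the recovered factor $e^{-(\HM - b_r)+O(n/\eta+\eta\sqrt n)}$ all match). But the plan as written has a gap precisely at the step where you say ``passing from $\WEF$ to $\WGF+\mu$ via the field moment calculus, approximating the indicator by a suitable combination of exponentials.'' Lemma~\ref{lem:uberharmonic} and Proposition~\ref{prop:mixedmoments} compare \emph{polynomial moments} of $\WEF$ and $\WGF$; they do not compare indicator events, and a Chebyshev-type estimate from these moments would give only polynomial decay in $\HM-b_r$, not the exponential factor you need. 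The conversion to Gaussian has to happen \emph{after}, not before, the indicator has been replaced by something the framework can handle.

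The paper does exactly this via a re-biasing device you are circling but not quite naming. Write $\Bias[*] = \Bias_{\omega_1}+\Bias_{\omega_2} + 2\F(i\zeta_{b_r}) - 2\F(\omega_1\zeta_{b_*})$ with $b_*$ the barrier height just above $\HM$; then pointwise $e^{\Bias_{\omega_1}(\WEF)+\Bias_{\omega_2}(\WEF)}\one[\WEF\in\BL_{\omega_1}] \le e^{\Bias[*](\WEF)}\,e^{2(b_*-b_r)+2\eta\sqrt n}$, since on the barrier event the increment $\WEF(\omega_1\zeta_{b_*})-\WEF(i\zeta_{b_r})$ is at most $(b_*-b_r)+\eta\sqrt n$. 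This is a Chernoff tilt with tilt strength exactly $1$ (not ``calibrated to the window width $\eta\sqrt n$'' as you suggest): the choice $\lambda=1$ is forced, because it is what reduces the weight at $i\zeta_{b_r}$ from $-4$ back to $-2$, landing $\Bias[*]$ inside the admissible class $\mathfrak W^*_{\ell,\delta}$ so that Assumption~\ref{ass:wusa} applies directly. With that tilt the indicator becomes a deterministic constant, the Gaussian comparison is a single exponential-moment estimate, and the variance computation $\tfrac12\Var(\Bias[*](\WGF)) = (n_0-b_r)+(n_0-b_*)+O(1)$ finishes the job; your additive losses $n/\eta$ and $\eta\sqrt n$ come from $|b_*-\HM|\le n/\eta$ and the barrier window width, not from propagating constants through a separate ballot estimate. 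Your choice of $b_{k^*}\le\HM$ rather than $b_*\ge\HM$ is immaterial (it just pushes the $O(n/\eta)$ into the covariance decoupling rather than into $b_*-\HM$), but the tilt strength must be $1$ and the tilt must be applied to $\WEF$ before the Gaussian comparison, not to $\WGF$ afterward.
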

\begin{proof}
  Unlike when $\HM$ was small, in the setting of the previous lemma, $\Bias[\omega_1]+\Bias[\omega_2],$ which is the biasing term that appears in the left-hand side, will have much too large a variance.  This is because, by analogy with branching random walk, this bias counts the segment before the $\omega_1$ and $\omega_2$ rays split twice.  Hence, we would like to re-bias the exponential weight on the left-hand side.  Ideally we would replace $\omega_1\zeta_{b_r}$ with $\omega_1\zeta_{\HM}$ in the biasing term.  However, we do not have exact control on the value of $\Bias[\omega_1](\omega_1\zeta_{\HM}),$ and so we instead choose an approximation over which we do.  To this end, let $b_* \in \left\{ b_r,b_{r+1},\dots,b_\eta \right\}$ be the closest element to $\HM$ that is larger than or equal to $\HM,$
  so that \(b_* - \HM \leq \tfrac{n}{\eta}.\)

  Let 
\begin{align*}
    \Bias[*] : \F \mapsto\
   & \Bias{\F}[\omega_1]+\Bias{\F}[\omega_2]
    +2\F(i \zeta_{b_r})
    -2\F(\omega_1 \zeta_{b_*}) \\
    &=    \Bias{\F}[\omega_2]+  2\F(\omega_1 \zeta_{n_0}) -2\F(\omega_1 \zeta_{b_*}). 
  \end{align*}
  Recall that
  \[
    \BL[\omega] = \{ |\F(\omega\zeta_{b_k}) - \F(i\zeta_{b_r}) - (b_{k}-b_r)| \leq \eta \cdot n^{1/2},~\forall~ r < k \leq \eta \}. 
  \]
  Hence, by \eqref{rv_Y}, 
  \[
    \Exp[\MFI(\omega_1)\MFI(\omega_2)]
    \leq
    \Exp\left[ 
      e^{\Bias{\WEF}[*] + 2(b_* - b_r) +  2\eta n^{1/2}}
    \right].
  \]
  To this exponential moment, we may apply Assumption~\ref{ass:wusa}, as we have a uniform lower bound on the distance between all points.  
  
  Therefore, to complete the evaluation of this exponential moment, we just need to estimate the variance of $\Bias{\WGF}[*].$ By how $b_*$ was chosen (part (d) of definition~\ref{def:brwlike}), we have
  \[
    \Exp\left[ \Bias{\WGF}[\omega_2]( \WGF(\omega_1\zeta_{n_0}) - \WGF(\omega_1 \zeta_{b_*})) \right] = O(1),
  \]
  as all but the $O(1)$ terms cancel from the covariance.
  Hence, we conclude that
  \[
    \tfrac 12 \Var\left( \Bias{\WGF}[*] \right) = (n_0 - b_r) + (n_0 - b_*) + O(1),
  \]
  which leads to the conclusion
\begin{align} \notag
    \Exp[\MFI(\omega_1)\MFI(\omega_2)]
    &\ll
       e^{
      2n_0 + b_* - 3b_r +2 \eta n^{1/2}  
    }  \\
    & \label{lwb_estimate}\ll
    e^{
      2n_0 + \HM - 3b_r + \eta n^{1/2} + n \eta^{-1} 
    }.
  \end{align}
  Finally, by Lemma~\ref{lem:1pLB}, for $i=1,2$
  \[
    \Exp[\MFI(\omega_i)]
    \asymp \Exp\left[ e^{\Bias_{\omega_i}(\WGF)} \right]
    =e^{n_0 - b_r + O(1)},
  \]
  which completes the proof.
 \end{proof}

 We are now able to show the desired lower bound, i.e.\,the proof of Theorem~\ref{thm:subharmoniclb}.
\begin{proof}[Proof of Theorem~\ref{thm:subharmoniclb}]
  Recall that we let
  \[
  \Omega = \left\{ e^{i(\tfrac{\pi}{2} + h e^{-n_0})} : h \in \mathbb{Z}, |h| < N^{-\delta}e^{n_0} \right\},
  \quad
  \text{and}
  \quad
    Z=  \sum_{\omega \in \Omega} \MFI(\omega) . 
  \]
  We bound the probability that $Z > 0$ from below using that
  \begin{align}
   \Exp[ \one[Z > 0]]
    \geq \frac
    { \bigl(\sum_{\omega}\Exp\left[ \MFI(\omega) \right]\bigr)^2}
    {\sum_{\omega_1,\omega_2}\Exp\left[\MFI(\omega_1)\MFI(\omega_2)\right]},
    \label{eq:csagain}
  \end{align}
  where $\omega_1,\omega_2$ run over the set $\Omega.$
  We now partition the sum in the denominator according to the value of $\HM(\omega_1,\omega_2).$
  Specifically, we let 
  $I_1$ be the sum
  \[
    I_1 = 
    \sum_{\substack{\omega_1,\omega_2 \\ \HM(\omega_1,\omega_2) \leq \tfrac34
      b_r
    }}
    \Exp\left[\MFI(\omega_1)\MFI(\omega_2)\right],
  \]
  and we let $I_2$ be the sum over the remaining pairs $(\omega_1,\omega_2).$

  By Lemma~\ref{lem:finefield2pUB1}, for all $N$ sufficiently large, we have the simple bound
  \begin{align}
    I_1 
    &\leq 
    \sum_{\substack{\omega_1,\omega_2 \\ \HM(\omega_1,\omega_2) \leq \tfrac34
      b_r
    }}
    (1+\delta)\Exp\left[\MFI(\omega_1)\right]\Exp\left[\MFI(\omega_2)\right], 
    \nonumber \\
    &\leq 
    (1+\delta)
    \bigl(\sum_{\substack{\omega}} \Exp\left[\MFI(\omega)\right]\bigr)^2.
    \label{eq:I1bound}
  \end{align}

  To control $I_2,$ we partition the sum according to the size of $\HM(\omega_1,\omega_2).$
  For a given $\omega$ and an integer $p,$ there are at most $O(e^{n_0-\ell})$ many $\omega' \in \Omega$ so that $\HM(\omega,\omega') \geq \ell.$ 
By the estimate \eqref{lwb_estimate}, this implies that
    \begin{align*}
    I_2=&\sum_{3b_r/4 \le \ell \le n_0}
    \sum_{\substack{\omega_1,\omega_2 \in \Omega \\ \HM(\omega_1,\omega_2) = \ell}}
    \Exp\left[\MFI(\omega_1)\MFI(\omega_2)\right]\\
    \ll& n_0
    |\Omega| e^{3n_0-3b_r + {n}{\eta}^{-1} + \eta n^{1/2}} \\
    \ll&
    N^{-\delta}e^{4n_0-3b_r + {n}{\eta}^{-1} + \eta n^{1/2} + \log n}.
  \end{align*}

  For comparison, we need an estimate for the numerator in \eqref{eq:csagain}.  By Lemma~\ref{lem:1pLB}, we have 
  \begin{align*}
    \bigl(\sum_{\substack{\omega}} \Exp\left[\MFI(\omega)\right]\bigr)^2
    &\gg |\Omega|^2e^{2n_0 - 2b_r} \\
    &\gg N^{-2\delta}e^{4n_0 - 2b_r}.
  \end{align*}

  By how $b_r$ was chosen, we have $b_r = 2\delta \log N + O(1).$  Recalling that $n = \log N + O(1),$ we conclude that 
  \[
    I_2 \ll N^{-\delta/2} 
    \bigl(\sum_{\substack{\omega}} \Exp\left[\MFI(\omega)\right]\bigr)^2.
  \]
  Thus, applying this and \eqref{eq:I1bound} to \eqref{eq:csagain}, we get
  \[
   \Exp[ \one[Z > 0]]
   \geq 
   \frac{1}{1+\delta + o(1)}.
  \]
  This completes the proof.
\end{proof}

\printbibliography

\end{document}